\theoremstyle{plain} 
\newtheorem{Satz}{Theorem}[section] 
\newtheorem{Prop}[Satz]{Proposition} 
\newtheorem{Lem}[Satz]{Lemma}
\newtheorem{Kor}[Satz]{Corollary} 
\newtheorem*{Satz*}{Main Theorem} 
\theoremstyle{definition} 
\newtheorem{Def}[Satz]{Definition} 
\newtheorem{Bsp}[Satz]{Example}
\newtheorem{Bem}[Satz]{Remark} 
\newtheorem*{Not}{Notation}
\crefname{Satz}{Theorem}{Theorems}
\crefname{Prop}{Proposition}{Propositions}
\crefname{Lem}{Lemma}{Lemmata}
\crefname{Kor}{Corollary}{Corollaries}
\crefname{Def}{Definition}{Definitions}
\newcommand{\Z}{\mathbbm{Z}} 
\newcommand{\R}{\mathbbm{R}}
\newcommand{\VF}{\mathfrak{X}}
\newcommand{\del}{\partial}
\newcommand{\lto}{\longrightarrow}
\newcommand{\lmapsto}{\longmapsto}
\newcommand{\Frac}[2]{{\textstyle \frac{#1}{#2}}}
\newcommand{\upd}{\mathrm{d}}
\newcommand{\dvol}{\mathrm{dvol}}
\newcommand{\hyp}{\overline{\Sigma}_{Cl}}
\newcommand{\Susp}{S}
\renewcommand{\emptyset}{\varnothing}
\renewcommand{\epsilon}{\varepsilon}
\let\div\relax
\DeclareMathOperator{\Met}{\mathcal{R}}
\DeclareMathOperator{\PSC}{\mathcal{R}^{+}}
\DeclareMathOperator{\Ini}{\mathcal{I}}
\DeclareMathOperator{\DEC}{\mathcal{I}^{+}}
\DeclareMathOperator{\ric}{ric} 
\DeclareMathOperator{\div}{div}
\DeclareMathOperator{\scal}{scal}
\DeclareMathOperator{\grad}{grad}
\DeclareMathOperator{\tr}{tr}
\DeclareMathOperator{\id}{\mathbb{1}}
\DeclareMathOperator{\GF}{C^{\infty}}
\DeclareMathOperator{\Cl}{Cl}
\DeclareMathOperator{\SO}{SO}
\DeclareMathOperator{\Spin}{Spin}
\DeclareMathOperator{\End}{End}
\DeclareMathOperator{\Span}{span}
\DeclareMathOperator{\Fred}{Fred}
\DeclareMathOperator{\ind}{ind}
\DeclareMathOperator{\coker}{coker}
\DeclareMathOperator{\adiff}{\alpha--diff}
\DeclareMathOperator{\oladiff}{\overline{\alpha}--diff}
\renewcommand{\amssubjname}{\textit{2020 Mathematics Subject Classifications. }}
\begin{document}
\title{On the space of initial values strictly satisfying the dominant energy condition}
\author{Jonathan Glöckle\footnote{Jonathan Glöckle, Fakultät für Mathematik, Universität Regensburg, 93040 Regensburg, Germany,
Email address: \href{mailto:jonathan.gloeckle.math@outlook.de}{jonathan.gloeckle.math@outlook.de}}}
\maketitle

\begin{abstract}
	The dominant energy condition imposes a restriction on initial value pairs found on a spacelike hypersurface of a Lorentzian manifold. In this article, we study the space of initial values that satisfy this condition strictly. To this aim, we introduce an index difference for initial value pairs and compare it to its classical counterpart for Riemannian metrics. Recent non-triviality results for the latter will then imply that this space has non-trivial homotopy groups.
\end{abstract}
\amssubjname{53C21, 53C27, 83C05}

\section{Introduction}
\subsection{Dominant energy condition for initial values}
According to general relativity, the universe can be modeled by a time-oriented Lo\-rentz\-ian manifold $(N,\overline{g})$ whose large-scale behavior is governed by the Einstein equation
\begin{align*}
	T=\ric^{\overline{g}}-\frac{1}{2} \scal^{\overline{g}} \overline{g},
\end{align*}
where $T$ denotes the energy-momentum tensor. This does not only apply to the dynamics, the field equations also constraint the physical quantities experienced on a time-slice. More precisely, suppose that $(N,\overline{g})$ contains $M$ as a spacelike hypersurface. On $M$, the induced Riemannian metric $g$ and the second fundamental form $k$, defined with respect to the future-pointing unit normal $e_0$, form a so-called initial value pair $(g,k)$. The Gauß-Codazzi equations imply that it is subject to the Einstein constraints (cf.\ \cite{BI})
\begin{equation} \label{eq:CE}
	\begin{aligned}
		2\rho &= \scal^{g} + (\tr k)^2 - \|k\|^2 \\
		j &= \div k - \upd \tr k,
	\end{aligned} 
\end{equation}
where energy density $\rho = T(e_0,e_0)$ and momentum density $j = T(e_0,-)_{|TM}$ are components of the energy-momentum tensor.

For physical reasons, the energy-momentum tensor is assumed to always satisfy the dominant energy condition, which implies that $\rho \geq \|j\|$. We will say that an initial value pair $(g,k)$ satisfies the dominant energy condition if $\rho \geq \|j\|$, when $\rho$ and $j$ are defined by \eqref{eq:CE}. This condition plays a vital role in the positive mass theorem \cite{SY,Wi} stating that for an asymptotically Euclidean manifold $(M,g)$ with $k$ tending to zero at infinity, the ADM-mass is non-negative if $(g,k)$ satisfies the dominant energy condition.

In this article, we consider the case that $M$ is a compact spin manifold of dimension $n \geq 2$. Our aim is to study the space $\DEC(M)$ of initial value pairs $(g,k)$ for which the dominant energy condition holds strictly, i.e.\ $\rho > \|j\|$ everywhere. This is a subspace of the space $\Ini(M)$ of all initial value pairs, with $C^\infty$-topology. The reason for restricting to the strict version of the dominant energy condition is that it nicely connects to positive scalar curvature, which in turn is rather well-studied.
In \cite{AG}, Ammann and the author discuss some ideas how to extend the results to the (non-strict) dominant energy condition.

\subsection{Connection to positive scalar curvature and main result}
It is a simple observation that if $k \equiv 0$, then the strict dominant energy condition for $(g, k)$ reduces to the condition that $g$ has positive scalar curvature. However, whereas existence of positive scalar curvature metrics imposes a condition on the manifold, this is not true for the strict dominant energy condition. More precisely, we will see later that taking any metric $g$, the pair $(g, \frac{1}{n}\tau g)$ satisfies the dominant energy condition strictly as long as the absolute value of the constant $\tau \in \R$ is large enough. Moreover, such a $\tau$ can be chosen in a way that it continuously depends on the metric $g$ (in $C^2$-topology). This allows to define a comparison map $\Phi \colon \Susp\PSC(M) \simeq \PSC(M) \times [-1,1] \cup \Met(M) \times \{-1,1\} \to \DEC(M)$ by $(g,t) \mapsto (g,\frac{1}{n}\tau(g) t g)$, where $\Met(M)$ is the $C^\infty$-space of metrics, $\PSC(M)$ its subspace of positive scalar curvature metrics and $\Susp$ denotes the suspension.

One of the main approaches to positive scalar curvature is by index theoretic methods. Assume that $(M,g)$ is compact, spin, and of dimension $n$. Then, there is a spinor bundle $\Sigma_{Cl}M$ with a right $Cl_n$-action, called $Cl_n$-linear spinor bundle of $M$. Its Dirac operator $D$ commutes with the $Cl_n$-action and thus gives rise to a $Cl_n$-Fredholm operator, which has a $KO$-valued index called $\alpha$-index $\alpha(M)$. The Schrödinger-Lichnerowicz formula
\begin{align*}
	D^2=\nabla^*\nabla + \frac{1}{4} \scal
\end{align*}
implies that $D$ is invertible if $g$ has positive scalar curvature and so its index vanishes. By homotopy invariance of the index, it is independent of $g$, and so the $\alpha$-index provides an obstruction to existence of positive scalar curvature metrics on $M$ if it is non-zero for some spin structure on $M$.

In the case when there is a positive scalar curvature metric $g_0$ on $M$, this invariant can be refined to a secondary invariant known as index difference that allows to detect non-trivial homotopy groups in the space of positive scalar curvature metrics.
In order to emphasize that it refines the $\alpha$-index and to stress its connection with the $\alpha$-invariant for diffeomorphisms (cf.~\Cite[eq.~(2)]{CSS}), we will call it \emph{$\alpha$-index difference}, or \emph{$\alpha$-difference} for short.
It is constructed as follows:
As before, the $Cl_n$-linear Dirac operator defines a map assigning to each metric a $Cl_n$-Fredholm operator, which is invertible if the metric is of positive scalar curvature. Then applying the $KO$-valued index, we obtain the map
\begin{align*}
	\adiff \colon \pi_k(\PSC(M), g_0) \cong \pi_{k+1}(\Met(M), \PSC(M), g_0) \to KO^{-n-k-1}(\{*\}).
\end{align*} 

A similar invariant exists in the case of initial values. For this, the $Cl_n$-linear spinor bundle has to be replaced by the $Cl_{n,1}$-linear hypersurface spinor bundle $\hyp M$. To define it, we embed $M$ as spacelike hypersurface into a time-oriented spin Lorentzian manifold $(N,\overline{g})$ such that the pair $(g,k)$ arises as induced metric and second fundamental form. Then $\hyp M$ is the restriction of the $Cl_{n,1}$-linear spinor bundle of $N$ to $M$. It turns out that this bundle can be defined intrinsically -- without reference to $N$ -- by $\hyp M = \Sigma_{Cl}M \otimes_{Cl_n} Cl_{n,1}$, i.e.\ it is given by two copies of $\Sigma_{Cl}M$. The role of the Dirac operator is now played by the Dirac-Witten operator $\overline{D}$, which is $Cl_{n,1}$-linear in our case, and which will be defined in \cref{sec:DW-Op} below. There is a Schrödinger-Lichnerowicz type formula for $\overline{D}$
\begin{align*}
	\overline{D}^2 = \overline{\nabla}^*\overline{\nabla} +\frac{1}{2}(\rho-e_0 \cdot j^\sharp \cdot),
\end{align*}
which ensures that $\overline{D}$ is invertible if $(g,k)$ strictly satisfies the dominant energy condition. With these changes, the same construction as before yields an index difference for initial values
\begin{align*}
	\oladiff \colon \pi_k(\DEC(M), (g_0, k_0)) \cong \pi_{k+1}(\Ini(M), \DEC(M), (g_0, k_0)) \to KO^{-n-k}(\{*\}),
\end{align*}
where $(g_0, k_0) \in \DEC(M)$.
Notice that there is a degree shift in the target compared to $\adiff$:
This results from the $\Cl_{n,1}$-linearity of the Dirac-Witten operator in contrast to the $\Cl_n$-linearity of the Dirac operator.

\begin{Not}
	To avoid clumsy notation, we often write $\adiff(g_{-1},g_1)$ for the $\alpha$-difference applied to the $\pi_0$-class represented by $(S^0, 1) \to (\PSC(M), g_1),\, t \mapsto g_t$.
	Likewise, we write $\oladiff((g_{-1},k_{-1}),(g_1,k_1))$ for the $\overline{\alpha}$-difference of the $\pi_0$-class defined by $(S^0, 1) \to (\DEC(M), (g_1,k_1)),\, t \mapsto (g_t,k_t)$.
\end{Not}

Unlike the situation of the $\alpha$-difference, where the $\alpha$-index constitutes an interesting invariant obstructing positive scalar curvature, there is no interesting primary invariant associated with the $\overline{\alpha}$-difference:
The index of the Dirac-Witten operator $\overline{D}$ is always zero.
This follows for example from the observation that the dominant energy condition is not obstructed, since, as mentioned above, $(g, \frac{1}{n}\tau g) \in \DEC(M)$ for $g \in \Met(M)$ and suitably large $\tau \in \R$.
The $\overline{\alpha}$-difference, however, is an interesting invariant.
This is a consequence of the main theorem of this paper, where we compare it to the $\alpha$-difference or, in the case of the $\pi_0$-part, to the $\alpha$-index.
\begin{Satz*}
\begin{enumerate}
	\item For $g_0 \in \PSC(M)$ and all $k \geq 0$, the diagram
	\begin{equation*}
	\begin{tikzcd}
	\pi_k(\PSC(M), g_0)  \ar[dr, "\adiff"'] \rar{\mathrm{Susp}} & \pi_{k+1}(\Susp\PSC(M), [g_0,0]) \rar{\Phi_*}&
	\pi_{k+1}(\DEC(M), (g_0,0)) \ar{dl}{\oladiff} \\ 
	&  KO^{-n-k-1}(\{*\}) &
	\end{tikzcd}
	\end{equation*}
	commutes.
	\item	For $g_0 \in \Met(M)$,
	\begin{align*}
	\oladiff\left(\left(g_0,-\frac{1}{n}\tau(g_0)g_0 \right), \left(g_0,\frac{1}{n}\tau(g_0)g_0 \right)\right) = \alpha(M) \in KO^{-n}(\{*\}).
	\end{align*}
\end{enumerate}
\end{Satz*}
The idea of the proof is the following: For a pair of the form $(g, \frac{1}{n}\tau(g)tg)$, $t \in \R$, the $Cl_{n+1}$-linear Dirac-Witten operator is given by $\overline{D}=D \otimes_{Cl_n} Cl_{n,1} - \tau(g) t L(e_0)$, where $D$ is the $Cl_{n}$-linear Dirac operator from before and $L(e_0)$ is left multiplication with the future-pointing unit normal on $M$ when $M$ is considered as spacelike hypersurface of $N$ as above. Now, we observe that the $Cl_{n,1}$-structure of $\hyp M$ given by right multiplication can be extended to a $Cl_{n+1,1}$-structure by setting the right multiplication by the additional basis vector as $\tilde R(e_{n+1}) \coloneqq L(e_0)a$, where $a$ is the even-odd grading operator. With this $Cl_{n+1,1}$-structure, $\hyp M$ corresponds to $\Sigma_{Cl} M$ under the Morita equivalence relating $Cl_n$- and $Cl_{n+1,1}$-modules. Moreover, under this equivalence $D \otimes_{Cl_n} Cl_{n,1}$ is associated to $D$ and, by definition, the index map is invariant under this correspondence. The second summand can be understood as coming from the Bott map, which assigns to a $Cl_{n+1,1}$-Fredholm operator $F$ the family of $Cl_{n,1}$-Fredholm operators $[-1,1] \ni t \mapsto F+t\tilde{R}(e_{n+1}) a = F +tL(e_0)$. Again, invariance of the index map under this assignment is a consequence of its definition, but an extra sign has to be taken into account resulting from the fact that in the definition of the index map Morita equivalence and Bott map are applied in the reverse order.

As a consequence of the main theorem, every element in $\pi_k(\PSC(M), g_0)$ with non-trivial $\alpha$-difference gives rise to a non-zero element in $\pi_{k+1}(\DEC(M), (g_0,0))$. Such elements have been constructed for example by Hitchin \cite{Hi}, Hanke, Schick and Steimle \cite{HSS}, Botvinnik, Ebert and Randal-Williams \cite{BER} as well as Crowley, Schick and Steimle \cite{CSS} using different techniques. In particular, we obtain the following corollary.
\begin{Kor} \label{Kor:Main}
	\begin{enumerate}
		\item If $M$ is a compact spin manifold of dimension $n \geq 6$ that admits a metric of positive scalar curvature, then $\DEC(M)$ is not contractible.
		\item If $M$ is a compact spin manifold of dimension $n \geq 2$ with $\alpha(M) \neq 0$ (in particular, $M$ does not carry a positive scalar curvature metric), then $\DEC(M)$ is not connected.
	\end{enumerate}
\end{Kor}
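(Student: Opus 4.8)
The plan is to derive \cref{Kor:Main} as a direct consequence of the Main Theorem together with the cited non-triviality results for the classical $\alpha$-invariant, so the work is really bookkeeping about which homotopy groups carry detectable classes. For part (1), I would start from the hypothesis that $M$ is compact spin with $\dim M = n \geq 6$ admitting a positive scalar curvature metric $g_0$, so that $\PSC(M) \neq \emptyset$ and the classical index difference $\alpha \colon \pi_k(\PSC(M), g_0) \to KO^{-n-k-1}(\{*\})$ is defined. The key input is that for a suitable choice of $k$ (depending on $n \bmod 8$) this map is nontrivial: the results of Hitchin \cite{Hi}, Hanke--Schick--Steimle \cite{HSS}, and especially Botvinnik--Ebert--Randal-Williams \cite{BER} and Crowley--Schick--Steimle \cite{CSS} produce, for every $n \geq 6$, some $k \geq 0$ and a class $x \in \pi_k(\PSC(M), g_0)$ with $\alpha(x) \neq 0$. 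By part (1) of the Main Theorem, $\overline\alpha(\Phi_* \Sigma x) = \alpha(x) \neq 0$ in $KO^{-n-k-1}(\{*\})$, hence $\Phi_* \Sigma x$ is a nonzero element of $\pi_{k+1}(\DEC(M), (g_0,0))$. A space with a nontrivial higher homotopy group cannot be contractible, which gives the claim.

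For part (2), the hypothesis is $\alpha(M) \neq 0$; here $M$ need not admit a positive scalar curvature metric, so the classical index difference is unavailable, but $\Ini(M)$ and $\DEC(M)$ still make sense since, as observed in the introduction, $\DEC(M) \neq \emptyset$ always (take $(g_0, \frac{1}{n}\tau(g_0)g_0)$ for any metric $g_0$). Part (2) of the Main Theorem says precisely that the class in $\pi_0(\DEC(M), (g_0, \frac{1}{n}\tau(g_0)g_0))$ represented by the loop-like $S^0$-family $t \mapsto (g_0, \frac{1}{n}\tau(g_0) t g_0)$ is sent by $\overline\alpha$ to $\alpha(M) \in KO^{-n}(\{*\})$. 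Since $\alpha(M) \neq 0$, this $\pi_0$-class is nonzero, which forces the two points $(g_0, \frac{1}{n}\tau(g_0) g_0)$ and $(g_0, -\frac{1}{n}\tau(g_0) g_0)$ to lie in different path components of $\DEC(M)$; hence $\DEC(M)$ is not connected. The parenthetical remark that $M$ then carries no positive scalar curvature metric is the standard consequence of the Schrödinger--Lichnerowicz formula and homotopy invariance of the index, recalled earlier in the introduction.

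I would phrase the proof so that the only nontrivial external ingredient is the existence, for each relevant $n$, of a $k$ and a class detected by the classical $\alpha$-invariant; everything else is the Main Theorem plus elementary point-set topology (nontrivial $\pi_{k+1}$ obstructs contractibility; nontrivial $\pi_0$ obstructs connectedness). The one point that needs a little care is matching the degree conventions: one must check that the dimension range $n \geq 6$ in part (1) is exactly the range in which the cited papers guarantee a nonzero $\alpha$ in some degree $k \geq 0$ — for $n \leq 5$ the relevant constructions or the relevant $KO$-groups may fail to give anything, which is why the statement is restricted to $n \geq 6$. I expect this compatibility check to be the only real obstacle, and it is minor: it amounts to quoting the non-triviality theorems in the form already used in the positive scalar curvature literature and transporting the class across the comparison map $\Phi$ via the Main Theorem.
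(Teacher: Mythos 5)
Your proposal is correct and follows the same route the paper takes: part (1) is exactly the combination of Main Theorem (1) with the non-triviality of the classical index difference in degrees $k$ with $k+n+1 \equiv 1,2 \mod 8$ (\cref{Thm:CSS,Thm:BER}), while part (2) is an immediate reading of Main Theorem (2), noting that a non-zero class in $\pi_0$ separates $(g_0, \frac{1}{n}\tau(g_0)g_0)$ and $(g_0, -\frac{1}{n}\tau(g_0)g_0)$ into different path components.
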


The structure of the article is as follows. In the first chapter, we review the $KO$-valued index map and the construction of the $\alpha$-difference. Much of this material is owed to Ebert \cite{Eb}. The second chapter is devoted to the construction of the $\overline{\alpha}$-difference. To this end, the $Cl_{n,1}$-linear hypersurface spinor bundle and its Dirac-Witten operator are introduced. We discuss the $Cl_{n,1}$-linear version of the Dirac-Witten operator in some detail, as it seems not to have been studied before. In the last chapter, we construct the comparison map, prove the main theorem and discuss some more of its consequences. 

\subsection*{Acknowledgements}
I would like to thank Bernd Ammann for the idea for this project as well as his support during its execution.
I would also like to thank the anonymous referee, who went through the manuscript very  carefully and helped improving it at many spots. 
I was supported by the SFB 1085 “Higher Invariants” funded by the Deutsche Forschungsgemeinschaft.

\section{The classical \texorpdfstring{$\alpha$}{alpha}-index difference}
\subsection{\texorpdfstring{$KO$}{KO}-theory via Fredholm operators}
This section is devoted to the $KO$-valued index map, a map that associates to a family of Clifford-linear Fredholm operators an element in $KO$-theory. In its description, we will stick closely to the framework presented in Ebert \cite{Eb} that we briefly recall. All Hilbert spaces are understood as being real and separable. A \emph{$Cl_{n,k}$-Hilbert space} $H$ is always $\Z/2\Z$-graded. Typically, the $\Z/2\Z$-grading is given in terms of a grading operator $\iota \colon H \to H$, and the Clifford action is determined by a Clifford multiplication $c \colon \R^{n,k} \to \End(H)$, where $\R^{n,k}$ is the pseudo-Euclidean vector space $\R^n \oplus \R^k$ with the standard inner product that is positive definite on the first summand and negative definite on the second one. The convention for the Clifford multiplication is such that $c(v)c(w) + c(w)c(v) = -2\langle v, w \rangle$.

If $(H,\iota,c)$ is a $Cl_{n,k}$-Hilbert space, then $c$ gives rise to a representation $Cl_{n,k} \to \End(H)$, which can be decomposed into irreducible ones. $(H,\iota,c)$  is called \emph{ample}, if it contains each irreducible representation infinitely often. By the structure theory for real Clifford representations, this just means that $H$ is infinite-dimensional if $n-k \not\equiv 0 \mod 4$, and amounts to the condition that both the $+1$- and the $-1$-eigenspace of the volume element $\omega_{n,k}\coloneqq \iota c(e_1) \cdots c(e_{n+k})$ are infinite-dimensional if $n-k \equiv 0 \mod 4$.

\begin{Def}
	Let $(H, \iota,c)$ be an ample $Cl_{n,k}$-Hilbert space. Then a \emph{$Cl_{n,k}$-Fredholm operator} $F$ is a (bounded) Fredholm operator on $H$ that is self-adjoint, odd with respect to $\iota$,  $Cl_{n,k}$-linear and, in the case $n-k \equiv -1 \mod 4$, satisfies the additional condition that $\omega_{n,k}F\iota$ is neither essentially positive nor essentially negative. We denote by $\Fred^{n,k}(H)$ the space of $Cl_{n,k}$-Fredholm operators with operator norm topology. Furthermore, we write $G^{n,k}(H) \subseteq \Fred^{n,k}(H)$ for the subspace of invertible elements. 
\end{Def}

Note that we have $\Fred^{n+1,k}(H) \subseteq \Fred^{n,k}(H)$ and $\Fred^{n,k+1}(H) \subseteq \Fred^{n,k}(H)$:
In the cases $n-k =1,2 \mod 4$, this is immediate.
If $n-k =0 \mod 4$, this follows since the additional generator of the extended Clifford action on the $\Cl_{n,k}$-Hilbert space $H$ anti-commutes with $\omega_{n,k}$.
Finally, in case $n-k \equiv -1 \mod 4$, we use that for a $\Cl_{n+1,k}$- or $\Cl_{n,k+1}$-linear operator $F$, the additional generator of the extended Clifford action anti-commutes with $\omega_{n,k}F\iota$.

\begin{Bem}
	As was pointed out by the referee, ampleness of $H$ and the additional condition in the case where $n-k \equiv -1 \mod 4$ are only needed to ensure bijectivity of the index map discussed below and are not necessary for its existence.
	For instance, the inductive extension of the index map from degree $n-1$ to degree $n$ does not require the left hand vertical map in diagram~\eqref{eq:indbott} to be an isomorphism.
	Since in this article we will not use that the index map is an isomorphism, all discussions about ampleness and the additional condition are included for the sake of completeness only (and shifted to a large extent to the appendix).
\end{Bem}

\begin{Bsp} \label{Bsp}
	The archetypical example of a $Cl_{n,0}$-Fredholm operator is (the bounded transform of) the $Cl_{n}$-linear Dirac operator on a closed Riemannian spin manifold $(M,g)$ of dimension $n>0$: Let $P_{\Spin(n)}M \to P_{SO(n)}M$ be a spin structure of $M$. The \emph{$Cl_{n}$-linear spinor bundle} is $\Sigma_{Cl}M \coloneqq P_{\Spin(n)}M \times_\ell Cl_{n}$, where $\ell \colon \Spin(n) \to \End(Cl_{n})$ is given by left multiplication. Its name derives from the fact that right multiplication in $Cl_n$ induces a \emph{right Clifford multiplication} $R \colon \R^n \to \End(\Sigma_{Cl}M)$, which commutes with the left Clifford multiplication by tangent vectors. Furthermore, it carries a $\Z/2\Z$-grading $a$ induced by $Cl_n \to Cl_n, \; \R^n \ni v \mapsto -v$, the \emph{even-odd-grading}. The bundle metric induced by the metric on $Cl_n$ that makes the standard basis $(e_{i_1} \ldots e_{i_l})_{0 \leq l \leq n+k, 1 \leq i_1 < \dots < i_l \leq n+k}$ orthonormal allows to define an $L^2$-scalar product and the space of $L^2$-sections $H \coloneqq L^2(M,\Sigma_{Cl}M)$. Both $a$ and $R$ descend to $H$, turning $(H,a,R)$ into an ample $Cl_n$-Hilbert space. The \emph{$Cl_n$-linear Dirac operator} $D$, i.e.\ the Dirac operator of $\Sigma_{Cl}M$ w.r.t.\ the connection induced by the Levi-Civita connection, can be viewed as unbounded operator on $H$. By standard results on the analysis of Dirac operators, its bounded transform $F\coloneqq \frac{D}{\sqrt{1+D^2}}$ is a Fredholm operator on $H$, and as $D$ is $Cl_n$-linear (w.r.t.\ $R$) and odd (w.r.t.\ $a$), so is $F$. Thus, $F \in \Fred^{n,0}(H)$, whereby the additional condition for $n \equiv -1 \mod 4$ is well-known to be satisfied for Dirac type operators. In order to be self-contained, we recall this in the appendix. It is worth noting that the Schrödinger-Lichnerowicz formula implies that $F$ is invertible, so $F \in G^{n,0}(H)$, if $g$ is a metric of positive scalar curvature.
\end{Bsp}

The following consequence of Kuiper's theorem is proven in \cite{Eb}. It is one of the main ingredients for translating the classical results from \cite{AS} into the present framework.
\begin{Prop} \label{Prop:Cntr}
	The space $G^{n,k}(H)$ is contractible for all $n,k \geq 0$.
\end{Prop}

\begin{Satz}[Index map] \label{Thm:KOviaCl}
	If $H$ is an ample $Cl_{n,k}$-Hilbert space, then $\Fred^{n,k}(H)$ represents $KO$-theory: For compact relative CW-complexes $(X,Y)$, there is a natural (in $(X,Y)$) bijection
	\begin{align*}
	\ind \colon	[(X,Y),(\Fred^{n,k}(H),G^{n,k}(H))] \lto KO^{k-n}(X,Y)
	\end{align*}
	called \emph{index map}. Moreover, $\ind$ is invariant under $Cl_{n,k}$-Hilbert space isomorphisms, i.e.\ if $U \colon H \to H^\prime$ is an isomorphism of $Cl_{n,k}$-Hilbert spaces, then
	\begin{equation*}
	\begin{tikzcd}[column sep=tiny]
	{[(X,Y),(\Fred^{n,k}(H),G^{n,k}(H))]} \ar[rr,"\cong"] \ar[dr,"\ind",swap] & & {[(X,Y),(\Fred^{n,k}(H^\prime),G^{n,k}(H^\prime))]}  \ar[dl,"\ind"] \\
	&  KO^{k-n}(X,Y) &
	\end{tikzcd}
	\end{equation*}
	commutes, where the upper map is induced by $\Fred^{n,k}(H) \ni F \mapsto UFU^{-1}$.
\end{Satz}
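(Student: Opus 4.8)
The plan is to recognise this as the Atiyah--Singer theorem on Clifford-linear Fredholm operators, repackaged in Ebert's framework, and to build the relative, natural statement on top of it. \emph{Reduction.} By \cref{Prop:Cntr}, $G^{n,k}(H)$ is contractible, and the inclusion $G^{n,k}(H)\hookrightarrow\Fred^{n,k}(H)$ satisfies the homotopy extension property (HEP): $\Fred^{n,k}(H)$ is an open subset of a Banach space of operators, hence metrizable and of CW homotopy type. Contracting $G^{n,k}(H)$ to a fixed $F_0$ and extending the homotopy over $\Fred^{n,k}(H)$ by the HEP, one gets for every compact relative CW-pair $(X,Y)$ a natural bijection between $[(X,Y),(\Fred^{n,k}(H),\{F_0\})]$ and $[(X,Y),(\Fred^{n,k}(H),G^{n,k}(H))]$, and the former is the group $[X/Y,\Fred^{n,k}(H)]_*$ of pointed homotopy classes. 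So it suffices to show that $\Fred^{n,k}(H)$ represents $\widetilde{KO}^{\,k-n}$ on compact CW-complexes, naturally.

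\emph{The index map.} Given a pointed $f\colon Z\to\Fred^{n,k}(H)$ with $Z$ a compact CW-complex, a homotopy rel basepoint and a stabilisation by a fixed finite-dimensional graded $Cl_{n,k}$-module make $\ker f$ and $\coker f$ into finite-rank, $\iota$-graded, $Cl_{n,k}$-equivariant vector bundles over $Z$; by the Atiyah--Bott--Shapiro description of $KO$-theory, the formal difference $[\ker f]-[\coker f]$ of graded Clifford bundles represents a class in $KO^{k-n}(Z)$, lying in $\widetilde{KO}^{\,k-n}(Z)$ since $f$ is invertible at the basepoint. The standard families-index arguments show this is independent of the homotopy and the stabilisation, invariant under homotopies of $f$, and natural in $Z$, which produces $\ind$. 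The invariance clause of the theorem is then essentially formal: conjugation by a $Cl_{n,k}$-Hilbert space isomorphism $U\colon H\to H'$ carries $\ker$ and $\coker$ bundles to equivariantly isometric ones, hence does not change the represented $KO$-class, and the triangle commutes.

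\emph{Bijectivity.} For surjectivity, every class in $KO^{k-n}(Z)$ is, after stabilisation, a difference of graded $Cl_{n,k}$-bundles over $Z$, and such a difference is realised as the index of a ``projection family'' built from $F_0$ and a family of graded $Cl_{n,k}$-linear projections. Injectivity is the substantive point: if $\ind(f)=0$, then after further stabilisation the graded bundles $\ker f$ and $\coker f$ become equivariantly isomorphic; choosing such an isomorphism one deforms $f$ through $Cl_{n,k}$-Fredholm operators into the invertibles, i.e.\ into $G^{n,k}(H)$, whereupon the contractibility of $G^{n,k}(H)$ from \cref{Prop:Cntr} yields the pointed nullhomotopy of $f$. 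Equivalently and more structurally, one shows that the Bott-type delooping map $\Fred^{n+1,k}(H)\to\Omega\,\Fred^{n,k}(H)$, sending $F$ to the loop $t\mapsto\cos(\tfrac{\pi t}{2})F+\sin(\tfrac{\pi t}{2})\,c(e_{n+1})\iota$ (concatenated with a fixed path between its endpoints inside the contractible $G^{n,k}(H)$), is a weak homotopy equivalence for all $n,k$; together with the computation $\pi_0(\Fred^{n,k}(H))\cong KO^{k-n}(\{*\})$ via the index of the kernel, this exhibits $\{\Fred^{n,k}(H)\}$ as the relevant part of the $KO$-spectrum, and Brown representability then gives the natural bijection on all relative CW-pairs.

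I expect the main obstacle to be injectivity in the last step -- equivalently, that the Bott delooping map is a weak equivalence. This is the classical Atiyah--Singer analysis of (skew-adjoint) Fredholm operators transported to the graded Clifford-linear setting of \cite{Eb}: the genuine work is the stabilisation argument handling the failure of $\dim\ker f(z)$ to be locally constant, and checking that ampleness and the component condition for $n-k\equiv-1\bmod 4$ are exactly what makes the classical argument go through; \cref{Prop:Cntr} then supplies the Kuiper-type input needed to finish once the family has been pushed into $G^{n,k}(H)$.
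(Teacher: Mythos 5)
Your proposal takes a genuinely different route from the paper. The paper builds the index map inductively, in Ebert's framework: start with the Atiyah--Jänich theorem for $Cl_{0,0}$-Fredholm families, extend to $Cl_{n,0}$ via the Bott map bijection (\cref{Thm:KOviaCl2}) through diagram \eqref{eq:indbott}, extend to $Cl_{n,k}$ with $k\le n$ via the Morita homeomorphisms of \cref{Prop:MorMap} through diagram \eqref{eq:indmor}, and finally to all $(n,k)$ via \eqref{Diag:Bott8}. Bijectivity, naturality, and invariance under $Cl_{n,k}$-Hilbert space isomorphisms are thereby inherited step by step from the single base case $(0,0)$, where each intermediate map is manifestly conjugation-equivariant. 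You instead propose to define $\ind$ directly on each $\Fred^{n,k}(H)$ from kernel bundles and then prove bijectivity via stabilisation. The Bott delooping in your last paragraph is essentially a restatement of the paper's Bott map step; the real difference is whether the hard analysis is localised at $(0,0)$ and transported, or carried out for each $(n,k)$ separately.

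There is, however, a genuine gap in your description of what $\ind$ assigns to a family. Since each $f(z)$ is self-adjoint, $\coker f(z)=\ker f(z)$, so your ``formal difference $[\ker f]-[\coker f]$'' is identically zero. The Atiyah--Bott--Shapiro picture for $Cl_{n,k}$-linear self-adjoint odd families does not produce a difference of two bundles; it produces the class of the single $\Z/2\Z$-graded $Cl_{n,k}$-module bundle $\ker f$ (after stabilisation), taken in the Grothendieck group of such bundles modulo those whose module structure extends to $Cl_{n+1,k}$. The grading $\iota$ and the Clifford action carry all the $K$-theoretic information; there is no cokernel summand supplying a minus sign except in the special case $n=k=0$, where one may interpret $\ker$ and $\coker$ as those of the off-diagonal block and recovers Atiyah--Jänich. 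This matters substantively for your bijectivity argument: the ampleness condition and the extra hypothesis for $n-k\equiv -1\bmod 4$ are precisely what make the stabilised kernel class well defined and the map onto the ABS quotient group surjective, and neither plays any visible role if one works with a literal $[\ker]-[\coker]$. Once the index formula is corrected to the module-class picture, your reduction via \cref{Prop:Cntr}, the invariance-under-$U$ argument, and the delooping strategy are sound.
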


The index map is constructed inductively, the starting point being the index of a family of $Cl_{0,0}$-Fredholm operators, i.e.\ odd Fredholm operators on a $\Z/2\Z$-graded Hilbert space. Here, the corresponding statement is known as Atiyah-Jänich theorem (cf.\ \cite[Thm.\ 2.17]{MA} for a detailed derivation from the version in \cite{AS}).

The generalization to arbitrary $n$ (but still with $k=0$) is provided by the Bott map.
\begin{Satz}[Bott map, {\cite[Thm. A(k)]{AS}}] \label{Thm:KOviaCl2}
	For compact CW-pairs $(X,Y)$,  the map
	\begin{align*} \nonumber
	[(X,Y),(\Fred^{n+1,k}(H),G^{n+1,k}(H))] &\lto [(X,Y)\times (I,\del I),(\Fred^{n,k}(H),G^{n,k}(H))]
	\\
	[x \mapsto F_x] &\lmapsto [(x,t) \mapsto F_x+tc(e)\iota]
	\end{align*}
	is a natural bijection.\footnote{For two pairs $(X,A)$ and $(Y,B)$, we write $(X,A) \times (Y,B) \coloneqq (X \times Y, X \times B \cup A \times Y)$.} Here, $e$ is the additional basis vector of $\R^{n+1,k}$ compared to $\R^{n,k}$ and $I=[-1,1]$.
\end{Satz}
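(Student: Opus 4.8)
The plan is to obtain this from \cite[Thm.~A(k)]{AS}: the assertion is the Atiyah--Singer Clifford periodicity theorem, transported into the operator framework of \cref{Thm:KOviaCl}. So there are two things to carry out --- first check that the displayed assignment is a well-defined map between the indicated pairs of homotopy classes, and then deduce its bijectivity from (the proof of) the Atiyah--Singer theorem together with the contractibility statement of \cref{Prop:Cntr}.

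For the first step I would put $\sigma \coloneqq c(e)\iota$ and record its properties. Since $e$ is a positive generator, $c(e)$ is odd, skew-adjoint and squares to $-1$; as $\iota$ is an even self-adjoint involution anticommuting with $c(e)$, the operator $\sigma$ is self-adjoint, odd, $Cl_{n,k}$-linear and satisfies $\sigma^2 = 1$. Any $F \in \Fred^{n+1,k}(H)$ commutes with $c(e)$ and is odd, hence anticommutes with $\sigma$, which yields
\begin{align*}
(F_x + t\sigma)^2 = F_x^2 + t^2 .
\end{align*}
Thus $F_x + t\sigma$ is a bounded, self-adjoint, odd, $Cl_{n,k}$-linear operator whose essential spectrum stays away from $0$, so it is Fredholm, and it is invertible as soon as $t \neq 0$ or $F_x$ is. A routine bookkeeping with the Clifford relations, using that $F_x$ is $Cl_{n+1,k}$-linear, confirms the remaining defining condition of $\Fred^{n,k}(H)$ in the case $n-k \equiv -1 \mod 4$ (compare the remark following the definition of $\Fred^{n,k}(H)$). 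Hence, if $F \colon X \to \Fred^{n+1,k}(H)$ sends $Y$ into $G^{n+1,k}(H)$, then $(x,t) \mapsto F_x + t\sigma$ sends $Y \times I \cup X \times \del I$ into $G^{n,k}(H)$; this construction is visibly natural and homotopy invariant, so it descends to the claimed map of homotopy classes.

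For bijectivity I would pass to an absolute statement. By adjunction, $[(X,Y)\times(I,\del I),(\Fred^{n,k}(H),G^{n,k}(H))] = [(X,Y),(P,P_0)]$, where $P$ is the space of paths $I \to \Fred^{n,k}(H)$ with endpoints in $G^{n,k}(H)$ and $P_0 \subseteq P$ the subspace of paths lying entirely in $G^{n,k}(H)$; the Bott construction is induced by a map of pairs $\beta \colon (\Fred^{n+1,k}(H),G^{n+1,k}(H)) \to (P,P_0)$. Now $G^{n+1,k}(H)$ and $P_0 \simeq G^{n,k}(H)$ are contractible by \cref{Prop:Cntr}, and taking endpoints exhibits $P$ as the total space of a fibration over $G^{n,k}(H) \times G^{n,k}(H)$ with fibre $\Omega \Fred^{n,k}(H)$, whence $P \simeq \Omega \Fred^{n,k}(H)$. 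So everything reduces to the claim that $\beta \colon \Fred^{n+1,k}(H) \to \Omega \Fred^{n,k}(H)$ is a weak homotopy equivalence, which is precisely \cite[Thm.~A(k)]{AS}. Its proof is by an explicit deformation: on the invertibles $\beta$ is already a map between contractible spaces, and in general one ``straightens'' an arbitrary path of $Cl_{n,k}$-Fredholm operators with invertible endpoints --- by a rotation in the direction of the extra generator combined with the spectral decomposition of the two endpoints --- into one of the affine form $t \mapsto F + t\sigma$; equivalently, one shows the homotopy fibre of $\beta$ is contractible. Granting the weak equivalence, the bijection on $[(X,Y),-]$ for compact CW pairs is formal, all spaces involved having the homotopy type of CW complexes. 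The main obstacle is exactly this straightening step, the technical heart of the Atiyah--Singer argument; in particular one has to control the volume-element condition in the degenerate residue classes, and carrying that argument through faithfully in the normalization of \cref{Thm:KOviaCl} is the one genuinely delicate point.
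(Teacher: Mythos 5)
The paper does not prove this statement: the theorem header attributes it to Atiyah--Singer \cite[Thm.~A(k)]{AS}, and the surrounding text merely places it in Ebert's operator-theoretic normalisation. Your proposal takes the same route --- verify well-definedness of the affine path, pass by adjunction to the map $\beta\colon\Fred^{n+1,k}(H)\to\Omega\Fred^{n,k}(H)$, invoke contractibility of $G^{n+1,k}(H)$ and $P_0$, and then defer the weak equivalence to [AS] --- so at the level of strategy you and the paper agree, and your scaffolding (the computation $(F_x+t\sigma)^2 = F_x^2 + t^2$, the path-space fibration) is correct.

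One specific point in the well-definedness check is misdescribed, though. You assert that a ``routine bookkeeping with the Clifford relations'' handles the extra condition on $\Fred^{n,k}(H)$ when $n-k\equiv -1 \bmod 4$. That works at $t=0$, where the remark after the definition of $\Fred^{n,k}(H)$ applies because $F_x$ is $Cl_{n+1,k}$-linear: $c(e)$ anticommutes with $\omega_{n,k}F_x\iota$. But for $t\ne 0$ this argument breaks: since $c(e)$ commutes with $F_x$ and with $\omega_{n,k}$ (as $n+k$ is odd here) and anticommutes with $\sigma=c(e)\iota$ and $\iota$, conjugating $\omega_{n,k}(F_x+t\sigma)\iota$ by $c(e)$ gives $-\omega_{n,k}(F_x-t\sigma)\iota$, \emph{not} $-\omega_{n,k}(F_x+t\sigma)\iota$, so no anticommuting operator falls out of the Clifford relations. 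The correct argument is topological, not algebraic: in this residue class $\Fred^{n,k}(H)$ is the complement of two open components (essentially positive/negative $\omega_{n,k}F\iota$), and since the path $t\mapsto F_x+t\sigma$ is continuous, starts in the good component at $t=0$, and the three components are open and disjoint, the entire path stays there. This is exactly the continuity argument the paper itself deploys at the end of the proof of \cref{MainThm}, so it is worth stating it this way rather than as algebra.
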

As $(X \times I)/(Y  \times I \cup X \times \del I) \cong \Sigma_{red} X/Y$ the right hand isomorphism in the following diagram exists, and the defintion of the index map can be extended inductively by requiring that it commutes:
\begin{equation} \label{eq:indbott}
\begin{tikzcd}[column sep=small]
{[(X,Y), (\Fred^{n,0}(H), G^{n,0}(H))]} \ar[r,dashed, "\ind"] \dar{\cong}& 
KO^{-n}(X,Y) \ar["\cong"]{d} \\
{[(X,Y)\times (I, \del I), (\Fred^{n-1,0}(H), G^{n-1,0}(H))]}  \rar{\ind} & 
KO^{-n+1}(X \times I \,,\,X \times \del I \cup Y \times I).
\end{tikzcd}
\end{equation}

The extension to arbitrary $k$ uses periodicity statements in the theory of $Cl_{n,k}$-Hilbert spaces known as Morita equivalences. One of them states that the categories of $Cl_{n,k}$-Hilbert spaces and $Cl_{n+1,k+1}$-Hilbert spaces are equivalent. Its construction is the following: A $Cl_{n,k}$-Hilbert space $(H,\iota,c)$ defines a $Cl_{n+1,k+1}$-Hilbert space structure on $H \oplus H$ by 
\begin{align} \label{eq:mor1} \nonumber
\tilde{\iota} &= 
\begin{pmatrix}
\iota & \phantom{-}0 \\
0 & -\iota
\end{pmatrix}\\ \nonumber
\tilde{c}(v)&= 
\begin{pmatrix}
c(v) & 0 \\
0 & -c(v)
\end{pmatrix} \quad \text{for all} \; v \in R^{n+k} \oplus 0\\
\tilde{c}(e)&= 
\begin{pmatrix}
0 & -\id \\
\id & \phantom{-}0
\end{pmatrix} \\ \nonumber
\tilde{c}(\epsilon)&= 
\begin{pmatrix}
0 & \id \\
\id & 0
\end{pmatrix},
\end{align}
where we view $\R^{n+1,k+1}$ as $R^{n,k} \oplus \R e \oplus \R \epsilon$. And a morphism $F \colon H \to H^\prime$ of $Cl_{n,k}$-Hilbert spaces gives rise to a morphism 
\begin{align*} \tilde F= \begin{pmatrix} F & 0 \\ 0 & F \end{pmatrix} \colon H \oplus H \to H^\prime \oplus H^\prime \end{align*}
of the corresponding $Cl_{n+1,k+1}$-Hilbert spaces. Conversely, for a $Cl_{n+1,k+1}$-Hilbert space $(H, \iota, c)$, the restrictions of the structure maps to $H_0 \coloneqq \ker(c(\epsilon)c(e)-\id)$ yield a $Cl_{n,k}$-Hilbert space, and morphisms of  $Cl_{n+1,k+1}$-Hilbert spaces restrict to morphisms of these $Cl_{n,k}$-Hilbert spaces. These constructions are seen to be mutually inverse up to natural isomorphism.

Another Morita equivalence exists between $Cl_{n+4,k}$-Hilbert spaces and $Cl_{n,k+4}$-Hilbert spaces. For this, we regard both $\R^{n+4,k}$ and $\R^{n,k+4}$ as $\R^n \oplus \R^k \oplus \Span \{e_1,e_2,e_3,e_4\}$, where $e_1, \ldots e_4$ are the last four basis vectors of $\R^{n+4}$ or the last four basis vectors of $\R^{k+4}$, respectively. Given a $Cl_{n+4,k}$-Hilbert space $(H,\iota,c)$, we can define a $Cl_{n,k+4}$-Hilbert space $(H, \iota, \tilde{c})$ by $\tilde{c}_{|\R^{n,k}}=c_{|\R^{n,k}}$ and $\tilde{c}(e_i) = \eta c(e_i)$ for $\eta=c(e_1)\cdots c(e_4)$. Morphisms are mapped to the morphisms defined by the same underlying bounded linear maps. The inverse procedure is given similarly, by assigning to a $Cl_{n,k+4}$-Hilbert space $(H,\iota,\tilde{c})$ the $Cl_{n+4,k}$-Hilbert space $(H,\iota,c)$ with $c_{|\R^{n,k}}=\tilde{c}_{|\R^{n,k}}$ and $c(e_i) = \tilde\eta \tilde{c}(e_i)$, where $\tilde\eta=\tilde{c}(e_1)\cdots \tilde{c}(e_4)$.

These equivalences are accompanied by homeomorphisms between the spaces of Clifford-linear Fredholm operators.
\begin{Prop} \label{Prop:MorMap}
	The Morita equivalences discussed above induce homeomorphisms of pairs
	\begin{align*}
	(\Fred^{n,k}(H),G^{n,k}(H)) &\lto (\Fred^{n+1,k+1}(H \oplus H),G^{n+1,k+1}(H \oplus H)) \\
	F &\lmapsto \begin{pmatrix} F & 0 \\ 0 & F \end{pmatrix} \\
	\intertext{and}
	(\Fred^{n+4,k}(H),G^{n+4,k}(H)) &\lto (\Fred^{n,k+4}(H),G^{n,k+4}(H)) \\
	F &\lmapsto F.
	\intertext{In particular, there is a homeomorphism}
	(\Fred^{n,k}(H),G^{n,k}(H)) &\lto (\Fred^{n+8,k}(H \otimes \R^{16}),G^{n+8,k}(H \otimes \R^{16})) \\
	F &\lmapsto F \otimes \id_{\R^{16}}.
	\end{align*}
\end{Prop}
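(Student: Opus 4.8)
\emph{Strategy.} In each case the map in question is exactly one of the functors of the Morita equivalences recalled above, applied to operators, so the plan is the same threefold routine: check that it carries $Cl$-Fredholm operators to $Cl$-Fredholm operators, that it is a bijection onto the relevant Fredholm space, that it is continuous with continuous inverse, and that it matches up the invertible operators (so that it is a homeomorphism \emph{of pairs}). I would do the first Morita map first, then the second, and finally deduce the $\R^{16}$-statement by composing them. For the first map, given $F \in \Fred^{n,k}(H)$ set $\tilde F = \left(\begin{smallmatrix} F & 0 \\ 0 & F \end{smallmatrix}\right)$ on $H \oplus H$ with the $Cl_{n+1,k+1}$-structure $(\tilde\iota,\tilde c)$ from above. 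Then $\tilde F$ is bounded and Fredholm (its kernel and cokernel are those of $F$ doubled), self-adjoint, odd with respect to $\tilde\iota$, and commutes with $\tilde c(v)$, $\tilde c(e)$, $\tilde c(\epsilon)$ — the last two immediately, since their blocks are scalar multiples of $\id$, and the first because $F$ commutes with $c(v)$. Conversely, any $G \in \Fred^{n+1,k+1}(H \oplus H)$ commutes with $\tilde c(\epsilon)\tilde c(e) = \pm\left(\begin{smallmatrix} \id & 0 \\ 0 & -\id \end{smallmatrix}\right)$, hence is block diagonal, and commuting with $\tilde c(e) = \left(\begin{smallmatrix} 0 & -\id \\ \id & 0 \end{smallmatrix}\right)$ forces its two blocks to coincide; unwinding the remaining conditions shows the common block lies in $\Fred^{n,k}(H)$. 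Since $\|\tilde F - \tilde F'\| = \|F - F'\|$, projection to a block is a continuous inverse, and $\tilde F$ is invertible iff $F$ is, so this is a homeomorphism of pairs.

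Two points in that verification need care, and I expect them to be the main obstacle. First, one must know that $H \oplus H$ is ample over $Cl_{n+1,k+1}$ given that $H$ is ample over $Cl_{n,k}$: a short block computation gives $\omega_{n+1,k+1} = \left(\begin{smallmatrix} \pm\omega_{n,k} & 0 \\ 0 & \pm\omega_{n,k} \end{smallmatrix}\right)$ (the two signs depending on the parities of $n+k$ and on the chosen order of $e,\epsilon$), so the $\pm1$-eigenspaces of $\omega_{n+1,k+1}$ are direct sums of $\pm1$-eigenspaces of $\omega_{n,k}$; since $n-k = (n+1)-(k+1)$ this is exactly what the ampleness hypothesis on $H$ delivers (and in the cases where ampleness just means infinite-dimensionality this is obvious). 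Second, when $n-k \equiv -1 \pmod 4$ one must verify the extra spectral condition for $\tilde F$: the same block computation gives that $\omega_{n+1,k+1}\tilde F\tilde\iota$ has both diagonal blocks equal to $\pm\omega_{n,k}F\iota$, so it is essentially positive (resp.\ negative) iff $\omega_{n,k}F\iota$ is essentially positive or essentially negative, which by hypothesis it is not. This parity- and sign-bookkeeping is the only genuinely delicate part.

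For the second Morita equivalence the map is the identity on operators, so continuity, continuity of the inverse, and preservation of invertibility are automatic; the content is the equality of subsets $\Fred^{n+4,k}(H) = \Fred^{n,k+4}(H)$ of the bounded operators, once the Clifford structure is changed to $\tilde c|_{\R^{n,k}} = c|_{\R^{n,k}}$ and $\tilde c(e_i) = \eta\,c(e_i)$ with $\eta = c(e_1)\cdots c(e_4)$. Self-adjointness, oddness (the grading $\iota$ is unchanged) and Fredholmness are untouched, and $Cl_{n,k+4}$-linearity of $F$ holds because $F$ commutes with $c|_{\R^{n,k}}$ and with each $c(e_i)$, hence with $\eta$ and with $\eta\,c(e_i)$; the reverse inclusion follows by the same argument applied to the symmetric inverse construction $c(e_i) = \tilde\eta\,\tilde c(e_i)$. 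Ampleness transfers because $\omega_{n,k+4}$ and $\omega_{n+4,k}$ agree up to the sign $\eta^2 = \pm\id$ and so have the same eigenspaces, and for the same reason the extra spectral condition transfers (note $(n+4)-k \equiv n-(k+4) \pmod 4$).

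Finally, iterating the first Morita homeomorphism four times identifies $(\Fred^{n,k}(H), G^{n,k}(H))$ with $(\Fred^{n+4,k+4}(H \otimes \R^{16}), G^{n+4,k+4}(H \otimes \R^{16}))$: four doublings turn $H$ into $H \otimes \R^{16}$, and the resulting block-diagonal operator with all sixteen blocks equal to $F$ is $F \otimes \id_{\R^{16}}$. Composing with the (operator-unchanged) second Morita homeomorphism $(\Fred^{n+4,k+4}(H \otimes \R^{16}), G^{n+4,k+4}(H \otimes \R^{16})) \to (\Fred^{n+8,k}(H \otimes \R^{16}), G^{n+8,k}(H \otimes \R^{16}))$ yields the asserted homeomorphism $F \mapsto F \otimes \id_{\R^{16}}$.
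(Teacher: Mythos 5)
The paper states \cref{Prop:MorMap} without proof, treating it as a routine consequence of the Morita equivalences recalled immediately beforehand (and implicitly deferring to Ebert's framework), so there is no paper proof to compare against; your proposal simply fills in the verification that the paper left implicit, and it does so correctly and along the only natural route.

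The one step that genuinely needs care is exactly the one you flag: the extra spectral condition when $n-k\equiv -1 \bmod 4$. Your phrasing slightly compresses what happens. Computing $\omega_{n+1,k+1}$ alone gives $\operatorname{diag}\bigl((-1)^{k+1}\omega_{n,k},\,(-1)^{n+1}\omega_{n,k}\bigr)$, and for $n-k$ odd these two signs are \emph{opposite}; it is only after multiplying on the right by $\tilde\iota = \operatorname{diag}(\iota,-\iota)$ (which contributes a second sign flip in the lower block) that the two diagonal blocks of $\omega_{n+1,k+1}\tilde F\tilde\iota$ both become $(-1)^{k+1}\omega_{n,k}F\iota$. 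So the statement that ``both diagonal blocks equal $\pm\omega_{n,k}F\iota$'' is true, but it is not the output of the same raw $\omega$-computation — it depends on the interaction between the $\omega_{n+1,k+1}$ sign pattern and the $\tilde\iota$ sign pattern cancelling precisely in the odd-$(n-k)$ case, and that is worth spelling out because a careless reader (or a careless author) could easily land on the opposite conclusion. In the ampleness case $n-k\equiv 0\bmod 4$ the two signs in $\omega_{n+1,k+1}$ agree, and no $\tilde\iota$ is involved, so there the naive block computation suffices. Your handling of the converse direction (block diagonality from commuting with $\tilde c(\epsilon)\tilde c(e)$, equal blocks from commuting with $\tilde c(e)$), of the second Morita map, and of the composition into $F\mapsto F\otimes\id_{\R^{16}}$ — noting that one uses the inverse of the stated direction of the second Morita map — are all fine.
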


The index map is then defined inductively for all $(n,k)$ with $0 \leq k \leq n$ by the requirement that
\begin{equation} \label{eq:indmor}
\begin{tikzcd}	
{[(X,Y), (\Fred^{n,k}(H), G^{n,k}(H))]} \ar[r,dashed, "\ind"] & 
KO^{k-n}(X,Y) \ar[equal]{d} \\
{[(X,Y), (\Fred^{n-1,k-1}(H_0), G^{n-1,k-1}(H_0))]} \uar{\cong} \rar{\ind} & 
KO^{k-n}(X,Y)
\end{tikzcd}
\end{equation}
commutes. Lastly, it is extended to the missing $(n,k)$ with $0 \leq n,k$ by commutativity of
\begin{equation} \label{Diag:Bott8}
\begin{tikzcd}	
{[(X,Y), (\Fred^{n,k}(H), G^{n,k}(H))]} \dar{\cong} \ar[r,dashed,"\ind"] & 
KO^{k-n}(X,Y) \dar{\cdot x} \\
{[(X,Y), (\Fred^{n+8,k}(H \otimes \R^{16}), G^{n+8,k}(H \otimes \R^{16}))]} \rar{\ind} & 
KO^{k-n-8}(X,Y),
\end{tikzcd}
\end{equation}
where $x$ denotes a generator of $KO^{-8}(\{*\})$.

\begin{Bem}
	The commutativity of \eqref{Diag:Bott8} does not only hold for $n<k$ (where it is true by definition), but is also true for $k \leq n$ provided that the right generator $x \in KO^{-8}(\{*\})$ is chosen. This follows from the last remark in \cite{AS}.
\end{Bem}

\begin{Bsp}
	In the setting of \cref{Bsp}, we can define the $\alpha$-index of $M$ by $\alpha(M) = \ind(F) \in KO^{-n}(\{*\})$. This invariant was first defined by Hitchin \cite{Hi} and is a well-known obstruction to positive scalar curvature: From the continuity of the assignment $g \mapsto F_g$ discussed in the next section, it follows that $\alpha(M)$ is independent of the metric on $M$ (in fact, it is even spin-bordism invariant) and so has to vanish for every spin structure if $M$ carries a positive scalar curvature metric.
\end{Bsp}

\subsection{Construction of the \texorpdfstring{$\alpha$}{alpha}-index difference} \label{sec:alpha}
Let $M$ be a compact spin manifold of dimension $n>0$ that has a positive scalar curvature metric $g_0$. The $\alpha$-index difference, also introduced by Hitchin \cite{Hi}, is a family version of the $\alpha$-index. More precisely, $\adiff \colon \pi_k(\PSC(M),g_0) \to KO^{-n-k-1}(\{*\})$ arises in the following way: As $\Met(M)$ is contractible, the long exact sequence for homotopy groups implies $\pi_k(\PSC(M),g_0) \cong \pi_{k+1}(\Met(M),\PSC(M),g_0)$. For each metric $g$, the $Cl_n$-linear Dirac operator $D_g$ defines a $Cl_n$-linear Fredholm operator 
\begin{align*}
F_g=\frac{D_g}{\sqrt{1+D_g^2}},
\end{align*}
which is invertible if $g \in \PSC(M)$. The assignment $g \mapsto F_g$ gives rise to a map $(\Met(M),\PSC(M)) \to (\Fred^{n,0},G^{n,0})$, which induces a map to $\pi_{k+1}(\Fred^{n,0},G^{n,0},F_{g_0})$. Applying the index map from the last section, we obtain an element in $KO^{-n}(D^{k+1},S^k) \cong KO^{-n-k-1}(\{*\})$.

In this outline, however, we glossed over the detail that the $Cl_n$-linear spinor bundles and hence the $L^2$-spaces, on which the Fredholm operators $F_g$ act, depend on the metric $g$. These $L^2$-spaces form a Hilbert bundle over $\Met(M)$, which, by Kuiper's theorem, can be trivialized. Such a trivialization allows to define the map $(\Met(M),\PSC(M)) \to (\Fred^{n,0},G^{n,0})$. We will make this more explicit: The $Cl_n$-linear spinor bundles for different metrics can be identified using the method of generalized cylinders due to Bär, Gauduchon and Moroianu \cite{BGM}. This gives rise to a specific trivialization of the Hilbert bundle of $L^2$-spaces.

Let us start with this construction by fixing a topological spin structure on $M$, i.e.\ a double covering 
\begin{align*}
P_{\widetilde{GL}^+(n)}M \to P_{GL^+(n)}M
\end{align*}
over the principal bundle of positively oriented frames of $TM$. This defines, for any $g \in \Met(M)$, a spin structure for $(M,g)$ by pullback
\begin{equation*}
\begin{tikzcd}
P_{\Spin(n)}(M,g) \rar \dar & P_{\widetilde{GL}^+(n)}M  \dar \\
P_{\SO(n)}(M,g) \rar & P_{GL^+(n)}M,
\end{tikzcd}
\end{equation*}
where $P_{\SO(n)}(M,g)$ is the principal bundle of positively oriented orthonormal frames with respect to $g$. Moreover, pulling back over the canonical projection $M \times [0,1] \to M$, we obtain
\begin{equation*}
\begin{tikzcd}
{P_{\widetilde{GL}^+(n)}M \times [0,1]} \rar \dar & P_{\widetilde{GL}^+(n)}M  \dar \\
{P_{GL^+(n)}M \times [0,1]} \rar \dar & P_{GL^+(n)}M \dar \\
{M \times [0,1]} \rar & M.
\end{tikzcd}
\end{equation*}
This gives rise a topological spin structure $P_{\widetilde{GL}^+(n+1)}M \times [0,1] \to P_{GL^+(n+1)}M \times [0,1]$ on $M \times [0,1]$ by extension along the standard embedding 
\begin{align*}
GL^+(n) &\lto GL^+(n+1) \\ A &\lmapsto \begin{pmatrix} A & 0 \\ 0 & 1 \end{pmatrix}
\end{align*}
and its double covering.

Now, given a metric $g \in \Met(M)$, we can define a family of metrics by $g_t = (1-t)g_0 + tg$. Such a family in turn defines the generalized cylinder $(M \times [0,1], g_t + \upd t^2)$, $t$ being the variable in $[0,1]$-direction. As above, the topological spin structure induces a spin structure $P_{\Spin(n+1)}(M \times [0,1], g_t + \upd t^2) \to P_{\SO(n+1)}(M \times [0,1], g_t + \upd t^2)$ on the generalized cylinder. This has the property that for all $t_0 \in [0,1]$ it restricts to the spin structure of $(M,g_{t_0})$ in the sense that
\begin{equation*}
\begin{tikzcd}
P_{\Spin(n)}(M,g_{t_0}) \rar \dar & P_{\Spin(n+1)}(M \times [0,1], g_t + \upd t^2) \dar \\
P_{\SO(n)}(M,g_{t_0}) \rar & P_{\SO(n+1)}(M \times [0,1], g_t + \upd t^2)
\end{tikzcd}
\end{equation*}
is a pullback, where the lower map is the inclusion $(e_1, \ldots, e_n) \mapsto (e_1, \ldots, e_n, \Frac{\del}{\del t})$.

The reason why we do this is that on  $P_{\Spin(n+1)}(M \times [0,1], g_t + \upd t^2)$ the Levi-Civita connection induces a canonical connection $\nabla$, which provides parallel transports
\begin{align*}
P^\nabla_{\gamma_x} \colon  P_{\Spin(n+1)}(M \times [0,1], g_t + \upd t^2)_{|(x,0)} \lto  P_{\Spin(n+1)}(M \times [0,1], g_t + \upd t^2)_{|(x,1)}
\end{align*}
along the curves $\gamma_x \colon [0,1] \to M \times [0,1], \; t \mapsto (x,t)$ for all $x \in M$. These assemble into an isomorphism of principal bundles
\begin{align*}
P^\nabla \colon  P_{\Spin(n+1)}(M \times [0,1], g_t + \upd t^2)_{|M \times \{0\}} \overset{\cong}{\lto}  P_{\Spin(n+1)}(M \times [0,1], g_t + \upd t^2)_{|M \times \{1\}}.
\end{align*}
The fact that $\Frac{\del}{\del t}$ is parallel along the curves $\gamma_x$ implies that $P^\nabla$ restricts to
\begin{align*}
P^\nabla \colon P_{\Spin(n)} (M,g_0) \overset{\cong}{\lto} P_{\Spin(n)} (M,g),
\end{align*}
and this induces an isomorphism on the associated $Cl_n$-linear spinor bundles
\begin{align*}
P^\nabla \colon \Sigma_{Cl}(M,g_0) &\overset{\cong}{\lto} \Sigma_{Cl}(M,g). \\
[\tilde{\epsilon},\tilde{\phi}] &\lmapsto [P^\nabla \tilde{\epsilon}, \tilde{\phi}]
\end{align*}
Furthermore, it is immediate that $P^\nabla$ is a point-wise isometry with respect to the standard scalar products $\langle -, - \rangle$ defined on the $Cl_{n}$-linear spinor bundles.

We want to promote this to a unitary transformation between the associated $L^2$-spaces. As the $L^2$-norm also depends on the volume element, we first compare those: There exists a positive function $\beta \in \GF(M)$ such that $\dvol^g = \beta\, \dvol^{g_0}$. Then $\sqrt{\beta} P^\nabla \colon \Sigma_{Cl}(M,g_0) \to  \Sigma_{Cl}(M,g)$ induces a unitary transformation
\begin{align*}
\Phi_g \colon H \coloneqq L^2(M, \Sigma_{Cl}(M,g_0))  \overset{\cong}{\lto} L^2(M, \Sigma_{Cl}(M,g))
\end{align*}
as
\begin{align*}
(\Phi_g(\phi),\Phi_g(\psi))_{L^2} = \int_M \langle \sqrt{\beta} P^\nabla(\phi),\sqrt{\beta} P^\nabla(\psi) \rangle \, \dvol^g =\int_M \langle \phi, \psi \rangle \, \dvol^{g_0} = (\phi,\psi)_{L^2}.
\end{align*}
Moreover, it is clear that $\Phi_g$ preserves the $\Z/2\Z$-grading and the right Clifford multiplication. The left Clifford multiplication by a vector field $X \in \VF(M)$ satisfies $\Phi_g(X \cdot \phi) =P^\nabla(X) \cdot \Phi_g(\phi)$ for any $\phi \in H$, where $P^\nabla(X)$ is the vector field on $M = M \times \{1\}$ obtained from $X$ by parallel transport along the curves $(\gamma_x)_{x \in M}$ in the cylinder $(M \times [0,1], g_t + \upd t^2)$.

It is not surprising that using this identification of the $L^2$-spaces (the bounded transforms of) the Dirac operators depend continuously on the metric. For a detailed proof of the following statement see \cite[Thm.\ 2.22]{MA}.
\begin{Satz} \label{Thm:MetFred}
	The map
	\begin{align*}
	(\Met(M), \PSC(M)) &\lto (\Fred^{n,0}(H), G^{n,0}(H)) \\
	g &\lmapsto \Phi_g^{-1} \circ \frac{D_g}{\sqrt{1+D_g^2}} \circ \Phi_g
	\end{align*}
	is well-defined and continuous with respect to the $C^1$-topology on the space of smooth metrics $\Met(M)$. In particular, it is continuous if $\Met(M)$ carries the $C^\infty$-topology.
\end{Satz}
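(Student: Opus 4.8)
The plan is to verify the three assertions of \cref{Thm:MetFred} in turn: well-definedness of the target, continuity, and the reduction of $C^\infty$-continuity to $C^1$-continuity. The last point is immediate, since the $C^\infty$-topology on $\Met(M)$ is finer than the $C^1$-topology, so a map continuous in $C^1$ is a fortiori continuous in $C^\infty$. Well-definedness amounts to checking that $\Phi_g^{-1} \circ \frac{D_g}{\sqrt{1+D_g^2}} \circ \Phi_g$ really lies in $\Fred^{n,0}(H)$, and that it lies in $G^{n,0}(H)$ when $g \in \PSC(M)$. Here I would invoke \cref{Bsp}: conjugation by the $Cl_n$-Hilbert space isomorphism $\Phi_g$ preserves self-adjointness, oddness with respect to $\alpha$, $Cl_n$-linearity with respect to $R$, the Fredholm property, and the Weyl-asymptotics condition in the case $n \equiv -1 \bmod 4$ — all of these are either conjugation-invariant outright or were already established for $\frac{D_g}{\sqrt{1+D_g^2}}$ on $L^2(M,\Sigma_{Cl}(M,g))$. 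Positive scalar curvature gives invertibility of $D_g$ via the Schrödinger-Lichnerowicz formula, hence invertibility of its bounded transform, hence of the conjugate. All of this is routine bookkeeping once the structural properties of $\Phi_g$ recorded just above are in hand.

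The substance of the theorem is the continuity statement, and here I would follow the strategy of \cite[Thm.\ 2.22]{MA} rather than reprove it from scratch — indeed the excerpt explicitly defers to that reference. The structure of the argument is: first, the family of first-order operators $g \mapsto \Phi_g^{-1} \circ D_g \circ \Phi_g$, regarded as unbounded operators on the \emph{fixed} Hilbert space $H$ with the \emph{fixed} domain $H^1(M,\Sigma_{Cl}(M,g_0))$, depends continuously on $g$ in the appropriate sense (continuity of the coefficients as a map into bounded operators $H^1 \to H$, or equivalently graph-norm continuity). This is where the $C^1$-regularity enters: the pulled-back Dirac operator $\Phi_g^{-1} D_g \Phi_g$ is a first-order differential operator whose principal symbol is $g$-dependent and whose zeroth-order term involves the Christoffel symbols of $g$, i.e.\ first derivatives of $g$; so controlling it in operator norm from $H^1$ to $L^2$ costs exactly one derivative of $g$. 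Second, one upgrades continuity of the family of unbounded operators to continuity of the family of bounded transforms $D \mapsto \frac{D}{\sqrt{1+D^2}}$; this is a general functional-calculus fact (the bounded transform is continuous on the space of self-adjoint operators with the resolvent/graph topology), and $\frac{x}{\sqrt{1+x^2}}$ being a bounded continuous function vanishing appropriately at infinity is what makes the Fredholm property and norm-continuity survive.

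The main obstacle is precisely the continuity of $g \mapsto \Phi_g^{-1} D_g \Phi_g$ as a family of operators on the fixed space $H$: one must write this operator out in a local frame, identify how its coefficients depend on $g$ (through the metric itself, its inverse, the volume density $\beta$, the Christoffel symbols, and the $BGM$ parallel-transport identification $P^\nabla$ of the spinor bundles), and estimate each contribution in the $H^1 \to L^2$ operator norm by the $C^1$-distance of $g$ to $g_0$ — uniformly, using compactness of $M$. The $P^\nabla$ term requires knowing that the generalized-cylinder parallel transport depends continuously (in $C^1$, say) on the family $g_t = (1-t)g_0 + tg$, which follows from smooth dependence of solutions of the parallel-transport ODE on parameters, again using compactness. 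Since this computation is carried out in full in \cite[Thm.\ 2.22]{MA}, I would present the reduction and the key estimates in outline and refer there for the routine details, rather than grinding through the local coordinate bounds here.
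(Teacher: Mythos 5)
The paper itself does not prove \cref{Thm:MetFred}; it simply states ``For a detailed proof of the following statement see \cite[Thm.\ 2.22]{MA}.'' Your proposal takes the same route — establish well-definedness by conjugation-invariance of the structural properties from \cref{Bsp} plus Schrödinger–Lichnerowicz for the $\PSC$ case, observe that $C^\infty$-continuity follows a fortiori from $C^1$-continuity, and defer the substantive $C^1$-continuity argument to \cite[Thm.\ 2.22]{MA}. Your additional sketch of what that argument should contain (continuity of the pulled-back first-order operator $\Phi_g^{-1}D_g\Phi_g$ as an $H^1 \to L^2$ family, with $C^1$-regularity entering through the Christoffel symbols and the $P^\nabla$ identification; then passing through the bounded transform by functional-calculus continuity in the gap/resolvent topology) is a sound outline and consistent with what the reference is expected to do, so there is nothing to object to here.
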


\begin{Def}
	The map from \cref{Thm:MetFred} gives rise to the composition
	\begin{align*}
	\adiff \colon \pi_k(\PSC(M),g_0) &\cong \pi_{k+1}(\Met(M),\PSC(M),g_0) \\ &\to \pi_{k+1}(\Fred^{n,0}(H),G^{n,0}(H),F_{g_0}) \overset{ind}{\lto} KO^{-n-k-1}(\{*\})
	\end{align*}
	that we call \emph{$\alpha$-index difference} or shortly \emph{$\alpha$-difference}.
\end{Def}

The $\alpha$-difference detects non-trivial homotopy groups in the space of metrics of positive scalar curvature. The following two results of this kind were independently obtained by different methods:
\begin{Satz}[Crowley, Schick, Steimle \cite{CSS}] \label{Thm:CSS}
	Let $(M,g_0)$ be a compact Riemannian spin manifold of positive scalar curvature and $n=\dim(M) \geq 6$. For all $k \geq 0$ with $k+n+1 \equiv 1,2 \mod 8$, the $\alpha$-difference
	\begin{align*}
	\adiff \colon \pi_k(\PSC(M),g_0) \lto KO^{-n-k-1}(\{*\}) \cong \Z/2\Z
	\end{align*}
	is split surjective.
\end{Satz}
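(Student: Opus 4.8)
The plan is to realise the required homotopy classes by a family version of Hitchin's construction, to compute their $\alpha$-invariant by an index argument, and thereby to reduce the theorem to a purely homotopy-theoretic assertion about diffeomorphism groups of discs. First I would fix an embedded disc $D^n \hookrightarrow M$ and arrange that $g_0$ restricts to a standard (torpedo) metric on it, cylindrical near $\partial D^n$. A diffeomorphism of $D^n$ that is the identity near $\partial D^n$ acts on the space of psc metrics on $D^n$ which are standard near the boundary; this space is non-empty (a torpedo metric is psc), and gluing such metrics to $g_0$ on the complement of the disc yields, for every $k \geq 0$, a map
\begin{align*}
\pi_k\bigl(\mathrm{Diff}_\partial(D^n)\bigr) \lto \pi_k\bigl(\PSC(M), g_0\bigr)
\end{align*}
whose construction only involves the disc and is therefore essentially independent of $M$. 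It then suffices to show that the composite with $\alpha$ hits a generator of $KO^{-n-k-1}(\{*\}) \cong \Z/2\Z$ by an element of order $\leq 2$ (the order condition then providing the splitting; for $k=0$ a set-theoretic section suffices).

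For the index step I would take $\varphi \in \pi_k(\mathrm{Diff}_\partial(D^n))$, spread the corresponding family of diffeomorphisms of $D^n$ over an auxiliary $D^{k}$ to obtain, via Cerf's theorem (here $n \geq 6$ enters), a homotopy sphere $\Sigma_\varphi \in \Theta_{n+k+1}$, and compare the $Cl_n$-linear Dirac operators of the metrics in the Hitchin family with those of the constant family. Since the two families agree outside the disc, a relative index theorem together with the additivity and naturality of the index map from \cref{Thm:KOviaCl} should identify $\alpha$ of the family with the $\alpha$-index $\alpha(\Sigma_\varphi) \in KO^{-n-k-1}(\{*\})$ of the associated homotopy sphere (equivalently, with an Eells--Kuiper / $\widehat A$-type secondary invariant of $\varphi$). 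The problem thus reduces to: does the composite
\begin{align*}
\pi_k\bigl(\mathrm{Diff}_\partial(D^n)\bigr) \lto \Theta_{n+k+1} \overset{\alpha}{\lto} KO^{-n-k-1}(\{*\}) \cong \Z/2\Z
\end{align*}
hit a generator, the first map being the Gromoll homomorphism?

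The main obstacle is exactly this last surjectivity, which is the technical heart of \cite{CSS}. The image of the Gromoll homomorphism is a prescribed stage of the Gromoll filtration of $\Theta_{n+k+1}$, so one must exhibit exotic spheres with non-zero $\alpha$-index already inside that filtration stage; the relevant $\Z/2\Z$'s, occurring precisely in degrees $\equiv 1, 2 \bmod 8$, are those generated by $\eta$- and $\eta^2$-multiplication in $KO_\ast(\{*\})$, and these are exactly the classes detected by Hitchin's exotic spheres when $k = 0$. Propagating Hitchin's examples down the Gromoll filtration is where Toda brackets are used: they produce explicit elements of $\pi_k(\mathrm{Diff}_\partial(D^n))$ mapping to the desired generator, and non-vanishing of $\alpha$ on them is checked by a bracket computation; moreover these elements can be arranged to have order $\leq 2$, which gives the splitting. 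Some additional bookkeeping is needed so that this diffeomorphism-group input is available for all $n \geq 6$ (not merely $n$ large compared to $k$) and to pass between $M$, $S^n$ and higher-dimensional discs -- this again uses $n \geq 6$, so that the relevant surgeries have codimension $\geq 3$ and leave $\PSC$ unchanged up to homotopy. All of this is carried out in detail in \cite{CSS}, building on \cite{Hi} and \cite{HSS}.
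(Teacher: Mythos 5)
The paper states this theorem purely as a citation to \cite{CSS}; it does not give (or even sketch) a proof, since the result is used as an external input to detect non-trivial classes via the comparison with $\overline{\alpha}$. Your sketch is therefore not being compared against an argument in this paper, but it does accurately reconstruct the strategy of Crowley--Schick--Steimle: Hitchin's construction feeding families from $\pi_k(\mathrm{Diff}_\partial(D^n))$ into $\pi_k(\PSC(M))$, identification of the resulting $\alpha$-invariant with the $\alpha$-index of the homotopy sphere produced by the Gromoll homomorphism (using $n \geq 6$ both for Cerf's theorem and for homotopy-invariance of $\PSC$ under codimension-$\geq 3$ surgery), and then exhibiting order-$2$ elements deep enough in the Gromoll filtration with non-vanishing $\alpha$ via Toda brackets, the order condition giving the splitting. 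The one thing worth making explicit is that for $k \geq 1$ ``split surjective'' is a statement about group homomorphisms, so order $\leq 2$ hitting the generator is exactly what is needed; for $k = 0$ the pointed-set section is, as you say, automatic from surjectivity. As a proposal this is correct in outline, with the genuine mathematical content (the Toda-bracket construction of suitable filtration classes) rightly delegated to \cite{CSS}.
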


\begin{Satz}[Botvinnik, Ebert, Randal-Williams \cite{BER}] \label{Thm:BER}
	Let $(M,g_0)$ be a compact Riemannian spin manifold of positive scalar curvature and $n=\dim(M) \geq 6$. For all $k \geq 0$, the $\alpha$-difference 
	\begin{align*}
		\adiff \colon \pi_k(\PSC(M),g_0) &\lto KO^{-n-k-1}(\{*\})
	\end{align*}
	is non-trivial whenever the target is non-zero, that is when $k+n+1 \equiv 0,1,2,4 \mod 8$.
\end{Satz}

We will use these results to construct non-trivial homotopy groups in the space of initial value pairs satisfying the dominant energy condition. The detection of these groups then uses an index difference for initial values that will be defined in the next chapter.

\section{An index difference for initial values}
\subsection{The  \texorpdfstring{$Cl_{n,1}$}{Cl\_\{n,1\}}-linear hypersurface spinor bundle}
Throughout this section, $(N, \overline{g})$ denotes a space- and time-oriented Lorentzian spin manifold. We follow the convention that the metric has signature $(-, +, \ldots, +)$, so that the induced metric $g$ on a spacelike hypersurface $M \subseteq N$ is positive definite. The future-pointing unit normal on $M$ will be called $e_0$. If $\overline{\nabla}$ denotes the Levi-Civita connection of $\overline{g}$ and $\nabla$ the one of $g$, the second fundamental form with respect to $e_0$ is the symmetric 2-tensor $k \in \Gamma(T^*M \otimes T^*M)$ defined by $\overline{\nabla}_X Y = k(X,Y)e_0 + \nabla_X Y$ for all $X, Y \in \VF(M)$.

We want to study the bundle obtained by restricting the $Cl_{n,1}$-linear spinor bundle of $(N, \overline{g})$ to the hypersurface $M \subseteq N$. Especially, we want to describe it intrinsically, only in terms of the pair $(g, k)$ induced on $M$. This will be of use later, when defining the $\overline{\alpha}$-difference for initial values and comparing it to the $\alpha$-difference.

The first step is to construct compatible spin structures on $M$ and $N$. Fixing a spin structure on $(N, \overline{g})$, we obtain a spin structure on $(M,g)$ by pulling back the one from $N$:
\begin{equation} \label{CD:1}
\begin{tikzcd}
P_{\Spin(n)}(M) \rar \dar & P_{\Spin_0(n,1)}(N)_{|M} \dar \\
P_{\SO(n)}(M) \rar & P_{\SO_0(n,1)}(N)_{|M}.
\end{tikzcd}
\end{equation}
Here, the lower map is given by $(e_1, \ldots , e_n)  \mapsto (e_0, e_1, \ldots, e_n)$, where $e_0$ is the future-pointing unit normal on $M$. As the right hand map is a double covering, so is the left hand one, and it suffices to construct a compatible $\Spin(n)$-action. This, we obtain by pulling back the action maps. More explicitly, there is a commutative diagram
\begin{equation} \label{CD:2}
\begin{tikzcd}
P_{\Spin(n)}(M) \times \Spin(n) \rar \dar & P_{\Spin_0(n,1)}(N)_{|M} \times \Spin_0(n,1) \dar \\
P_{\SO(n)}(M) \times \SO(n) \rar & P_{\SO_0(n,1)}(N)_{|M} \times \SO_0(n,1).
\end{tikzcd}
\end{equation}
and the desired map is the unique map from its upper-left corner to the upper-left corner of \eqref{CD:1} building, together with the other action maps, a commutative cube out of \eqref{CD:1} and \eqref{CD:2}. Note, that this commutative cube shows that $P_{\Spin(n)}(M)$ is not only a $\Spin(n)$-reduction of $P_{\SO(n)}(M)$ but also a reduction of $P_{\Spin_0(n,1)}(N)_{|M}$ with respect to the inclusion $i \colon \Spin(n) \hookrightarrow \Spin_0(n,1)$.

Next, we study associated bundles. The \emph{$Cl_{n,1}$-linear spinor bundle} 
\begin{align*}
\Sigma_{Cl}N=P_{\Spin_0(n,1)}(N) \times_\ell Cl_{n,1}
\end{align*}
is defined via the representation induced by left multiplication on $Cl_{n,1}$:
\begin{align*}
\ell \colon \Spin_0(n,1) \hookrightarrow Cl_{n,1} \lto \End(Cl_{n,1}).
\end{align*}
As noted above, $P_{\Spin(n)}(M) \to P_{\Spin_0(n,1)}(N)_{|M}$ is a $\Spin(n)$-reduction. Hence, from the theory of principal bundles (e.g. \cite[Satz 2.18]{Bau2}), it follows that
\begin{align} \label{eq:repr}
\Sigma_{Cl}N_{|M} =P_{\Spin_0(n,1)}(N)_{|M} \times_\ell Cl_{n,1} \cong P_{\Spin(n)}(M) \times_{\ell i} Cl_{n,1},
\end{align}
so the bundle $\Sigma_{Cl}N_{|M} \to M$ only depends on the Riemannian manifold $(M,g)$ and its chosen spin structure.

\begin{Def}
	The bundle $\Sigma_{Cl}N_{|M}$ from above is called \emph{$Cl_{n,1}$-linear hypersurface spinor bundle} and denoted by $\hyp M$.
\end{Def}

Similarly to the case of the $Cl_n$-linear spinor bundle, the $Cl_{n,1}$-linear hypersurface spinor bundle carries a right Clifford multiplication $R \colon \R^{n,1} \to \End(\hyp M)$ and an even-odd grading $a \colon \hyp M \to \hyp M$ as the corresponding notions for $Cl_{n,1}$ are $\Spin_0(n,1)$-invariant. Despite not being $\Spin_0(n,1)$-invariant, the scalar product $\langle - , - \rangle$ on $Cl_{n,1}$ for which the basis\footnote{For consistency with Lorentzian geometry, the basis vector of the negative definite part of $\R^{n,1}$ is called $e_0$ rather than $e_{n+1}$.} $(e_{i_1}e_{i_2}\cdots e_{i_k})_{0 \leq k \leq n,\, 0 \leq i_1 < \cdots < i_k \leq n}$ is orthonormal can be extended to $\hyp M$: Due to \eqref{eq:repr}, $\Spin(n)$-invariance of $\langle -,- \rangle$ is sufficient. This scalar product gives rise to a space of $L^2$-sections $\overline{H} \coloneqq L^2(M, \hyp M)$, on which $R$ and $a$ define a $Cl_{n,1}$-Hilbert space structure.

Yet, the trivialization of $TN_{|M}$ by $e_0$ allows us to do better. We immediately obtain the following result:

\begin{Prop} \label{Prop:Ext}
	Setting
	\begin{align*}
	\Psi \cdot e_{n+1} \coloneqq e_0 \cdot a(\Psi)
	\end{align*}
	for all $\Psi \in \hyp M$, $R$ extends to a $Cl_{n+1,1}$-multiplication
	\begin{align*}
	\tilde{R} \colon \R^{n+1,1} \to \End(\hyp M).
	\end{align*}
	that commutes with left multiplication by any $X \in TM$. Moreover, $(\overline{H}, a, \tilde{R})$ is an ample $Cl_{n+1,1}$-Hilbert space.
\end{Prop}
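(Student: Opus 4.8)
The plan is to verify directly that $\tilde R$, as defined, extends $R$ to a Clifford multiplication of the signature $(n+1,1)$, and then to check the ampleness condition. First I would record the Clifford relations that must hold. Since $R$ is already a $Cl_{n,1}$-multiplication, I only need to check the relations involving the new generator $e_{n+1}$: namely $\tilde R(e_{n+1})^2 = +\id$ (as $e_{n+1}$ is a positive-definite direction), $\tilde R(e_{n+1})$ anticommutes with $R(e_i)$ for $i=0,\dots,n$, and — since we want a genuine $Cl_{n+1,1}$-Hilbert space structure relative to the grading $\alpha$ — that $\tilde R(e_{n+1})$ is odd and (formally) self-adjoint with respect to $\langle-,-\rangle$. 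The computation $\Psi\cdot e_{n+1}\cdot e_{n+1} = e_0\cdot\alpha(e_0\cdot\alpha(\Psi)) = -e_0\cdot e_0\cdot\alpha^2(\Psi) = \Psi$ uses that $e_0$ (a negative-definite direction for the \emph{left} Clifford action on $\hyp M$) satisfies left $e_0\cdot e_0\cdot = -\id$, that $\alpha$ anticommutes with left Clifford multiplication by $e_0$, and $\alpha^2=\id$. Anticommutativity with the right multiplications $R(e_i)$ follows because left Clifford multiplication by $e_0$ commutes with every right Clifford multiplication and with $\alpha$ is where the sign is picked up: $R(e_i)$ is even iff $\dots$ — more precisely, right multiplication by a vector is odd, so $R(e_i)\alpha = -\alpha R(e_i)$, giving the desired anticommutation with $\tilde R(e_{n+1}) = L(e_0)\alpha$. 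The oddness of $\tilde R(e_{n+1})$ is immediate from the same relation, and self-adjointness follows since left Clifford multiplication by the future unit normal $e_0$ is (formally) skew-adjoint while $\alpha$ is self-adjoint and orthogonal, or one argues directly that $\tilde R(e_{n+1})$ is an isometric involution.

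Next I would note that $\tilde R$ commutes with left multiplication by $X\in TM$: this is clear for $R(e_0),\dots,R(e_n)$ by the defining property of the right Clifford action on $\hyp M$, and for $\tilde R(e_{n+1}) = L(e_0)\alpha$ it holds because $L(X)$ commutes with $L(e_0)$ up to the Clifford relation — actually one wants $L(X)$ to commute with $\tilde R(e_{n+1})$, which follows from $L(X)L(e_0) = -L(e_0)L(X)$ (as $X\perp e_0$) together with $L(X)\alpha = -\alpha L(X)$ (left multiplication by a vector is odd), the two signs cancelling. I'd spell this out in one line.

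Finally, ampleness: since $n+1 - 1 = n$, the relevant congruence is $n \bmod 4$. If $n\not\equiv 0\bmod 4$, ampleness of the $Cl_{n+1,1}$-Hilbert space $(\overline H,\alpha,\tilde R)$ just means $\overline H = L^2(M,\hyp M)$ is infinite-dimensional, which is clear ($M$ being a positive-dimensional compact manifold and $\hyp M$ a nontrivial bundle). If $n\equiv 0\bmod 4$, I must check that both eigenspaces of the volume element $\omega_{n+1,1} = \alpha\,\tilde R(e_1)\cdots\tilde R(e_{n+1})\tilde R(e_0)$ are infinite-dimensional. Here I would invoke the remark made just before Example~\ref{Bsp} in the excerpt: since the $Cl_{n,1}$-structure on $\overline H$ extends to the $Cl_{n+1,1}$-structure, the \emph{additional} generator $\tilde R(e_{n+1})$ anticommutes with the $Cl_{n,1}$-volume element, hence interchanges its $\pm1$-eigenspaces; combined with infinite-dimensionality of $\overline H$ this forces both eigenspaces of $\omega_{n,1}$ — and likewise a direct check shows for $\omega_{n+1,1}$ — to be infinite-dimensional. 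Alternatively, and more cleanly, I would point out that $\hyp M \cong \Sigma_{Cl}M\otimes_{Cl_n}Cl_{n,1}$ consists of two copies of $\Sigma_{Cl}M$, so $\overline H \cong H\oplus H$ with $H = L^2(M,\Sigma_{Cl}M)$ the ample $Cl_n$-Hilbert space from Example~\ref{Bsp}, and ampleness is inherited through the (first and second) Morita equivalences recalled in the text.

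\textbf{Main obstacle.} The computations are all short; the one point requiring genuine care is the bookkeeping of \emph{signs and gradings} in verifying the Clifford relations for $\tilde R(e_{n+1}) = L(e_0)\alpha$ — in particular keeping straight that $e_0$ is a negative-definite (timelike) direction for the left action while $e_{n+1}$ is positive-definite for the right action, and tracking the sign that $\alpha$ contributes against odd operators. The ampleness step for $n\equiv 0\bmod 4$ is the second thing to get right, but it is essentially forced by the extension-of-Clifford-multiplication remark already in the paper. I would therefore organize the proof as: (i) Clifford relations and anticommutation with $L(X)$ for $\tilde R(e_{n+1})$; (ii) self-adjointness and oddness, establishing that $(\overline H,\alpha,\tilde R)$ is a $Cl_{n+1,1}$-Hilbert space; (iii) ampleness via the splitting $\overline H\cong H\oplus H$.
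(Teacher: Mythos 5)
The paper states this proposition without proof (``We immediately obtain the following result''), so there is no ``paper proof'' to compare against; your structural plan (check Clifford relations for the new generator, commutation with $L(X)$, the grading and adjointness, then ampleness) is indeed the natural way to fill the gap. However, your ``main obstacle'' --- keeping the signs straight --- is exactly where you slip, and you slip in a way that contradicts the paper's stated conventions even though the errors happen to cancel.

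Concretely: the paper's Clifford convention is $c(v)^2 = -\langle v,v\rangle\id$. This is visible in the displayed Morita-equivalence formulas (there $\tilde{c}(e)^2=-\id$ for the \emph{positive}-definite generator $e$, and $\tilde c(\epsilon)^2=+\id$ for the negative one), and the paper also says explicitly, in the remark following \cref{Prop:EigVal}, that $L(e_0)^2=\id$, and in the proof of \cref{Lem:DWvsD} that $L(e_0)$ is \emph{self}-adjoint. You assert instead that $L(e_0)^2=-\id$ and that $L(e_0)$ is skew-adjoint, and you target $\tilde R(e_{n+1})^2=+\id$ and self-adjointness of $\tilde R(e_{n+1})$. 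With the paper's conventions the correct statements are $\tilde R(e_{n+1})^2 = L(e_0)\alpha L(e_0)\alpha = -L(e_0)^2\alpha^2 = -\id$ and $\tilde R(e_{n+1})^* = \alpha^*L(e_0)^* = \alpha L(e_0) = -L(e_0)\alpha = -\tilde R(e_{n+1})$ (skew-adjoint). These are the values a positive-definite Clifford generator must have under this convention, so the conclusion that $\tilde R$ is a $Cl_{n+1,1}$-multiplication is still reached --- but you reach it only because two sign errors happen to compensate. As written, the intermediate equations are false in this paper's setup, and had one sign flipped instead of both, you would have ``proved'' a $Cl_{n,2}$-structure instead of the claimed $Cl_{n+1,1}$-structure. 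The anticommutation with $R(e_i)$ and the commutation with $L(X)$ are done correctly.

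On ampleness, your first suggested argument (invoking the remark before \cref{Bsp} about extending Clifford multiplication) does not apply directly: that remark gives ampleness of a $Cl_{n,k}$-structure when it extends to a $Cl_{n+1,k}$- or $Cl_{n,k+1}$-structure, whereas here you would need a further extension of the $Cl_{n+1,1}$-structure on $\overline H$, which is not available. The phrase ``and likewise a direct check shows for $\omega_{n+1,1}$'' covers the actual content and is not really an argument. Your second suggestion is the right one: use $\hyp M\cong\Sigma_{Cl}M\otimes_{Cl_n}Cl_{n,1}$ (\cref{Bem:TensorCl}) to identify $\overline H_0=\ker(\tilde R(e_0)\tilde R(e_{n+1})-\id)$ with $H$, exactly as in \cref{Prop:MorH}, and then note that the first Morita equivalence, being an equivalence of categories of representations, preserves ampleness; since $(H,\alpha,R)$ is ample by \cref{Bsp}, so is $(\overline H,\alpha,\tilde R)$. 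I would keep only this second route.
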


This $Cl_{n+1,1}$-Hilbert space structure establishes the connection to the space $H$ of $L^2$-sections of the $Cl_n$-linear spinor bundle $\Sigma_{Cl}M$.
\begin{Prop} \label{Prop:MorH}
	The $Cl_{n+1,1}$-Hilbert space $(\overline H, a, \tilde R)$ corresponds to the $Cl_n$-Hilbert space $(H,a,R)$ under the Morita equivalence described in \eqref{eq:mor1}.
	\begin{proof}
		Via this Morita equivalence, the $Cl_{n+1,1}$-Hilbert space $\overline H$  corresponds to the $Cl_{n,0}$-Hilbert space $\overline{H}_0 = \ker(\tilde{R}(e_0)\tilde{R}(e_{n+1})-\id)$ with the structure obtained by restriction. 
		
		Let us look at the endomorphism of $\R^{n,1}$ given by reflection at the hyperplane orthogonal to the line $\R e_0$.
		Viewing $\R^{n,1}$ as subset of the Clifford algebra $Cl_{n,1}$, it may be described as
		\begin{align*}
		\R^{n,1} & \lto \R^{n,1} \\
		v &\lmapsto -e_0ve_0,
		\end{align*}
		since $e_0e_0 = 1$.
		This reflection now successively induces an endomorphism: First on the Clifford algebra $Cl_{n,1}$, then by the associate bundle construction on $\hyp M$ and finally on its space of $L^2$-sections $\overline{H}$.
		The obtained endomorphism is $\tilde{R}(e_0)\tilde{R}(e_{n+1})= R(e_0)L(e_0)a$.
		We are interested in its $1$-eigenspace.
		
		On the level of $Cl_{n,1}$, the $1$-eigenspace is given by $Cl_{n} \subseteq Cl_{n,1}$, the subalgebra generated by the fixed vectors $e_1, \ldots, e_n$, whereas the $-1$-eigenspace is the complement $R(e_0)Cl_{n} \subseteq Cl_{n,1}$.
		This implies that on the level of spinor bundles
		\begin{align*}
			\hyp M \supseteq \ker(\tilde{R}(e_0)\tilde{R}(e_{n+1}) - \id) = P_{\Spin(n)}M \times_\ell Cl_{n} = \Sigma_{Cl}M
		\end{align*}
		holds. On the level of $L^2$-sections, we get
		\begin{align*}
		\overline{H}_0 = L^2(M, \ker(\tilde{R}(e_0)\tilde{R}(e_{n+1})-\id)) =L^2(M, \Sigma_{Cl} M) = H
		\end{align*}
		as required.
	\end{proof}
\end{Prop}

As a consequence of \eqref{eq:repr}, the $Cl_{n,1}$-linear hypersurface spinor bundle possesses two natural connections: On the one hand, the Levi-Civita connection $(N,\overline{g})$ induces a connection $\overline{\nabla}$ on $P_{\Spin_0(n,1)}N_{|M}$ and $\hyp M$. On the other hand, as bundle associated to $P_{\Spin(n)}M$, the bundle $\hyp M$ carries a connection $\nabla$ induced by the Levi-Civita connection of $(M,g)$. They are related by the Weingarten map (also known as shape operator):

\begin{Lem} \label{Lem:CompConn}
	For all $X \in TM$ and $\psi \in \Gamma(\hyp M)$
	\begin{align*}
	\overline{\nabla}_X \psi = \nabla_X \psi - \frac12 e_0 \cdot W(X) \cdot \psi
	\end{align*}
	holds, where $W(X)=\overline{\nabla}_X e_0$ is the Weingarten map.
	\begin{proof}
		On the tangent bundle the difference of the connections is given by $\overline{\nabla}_X Y - \nabla_X Y = k(X,Y)e_0$. As $k(X,Y)=-\overline{g}(\overline{\nabla}_X Y- \nabla_X Y,e_0)=-\overline{g}(\overline{\nabla}_X Y,e_0)=\overline{g}(Y,\overline{\nabla}_X e_0)=g(Y,W(X))$ for all $X,Y \in \VF(M)$, the Weingarten map $W$ is the endomorphism associated to the symmetric bilinear form $k$.

		In order to transfer this to the spinor bundle, let $\tilde{\epsilon}$ be a local section of $P_{\Spin(n)}M$, and $(e_1, \ldots, e_n)$ its projection to $P_{SO(n)}M$. Abusing notation, we denote by $\tilde{\epsilon}$ also its image in $P_{\Spin_0(n,1)}N_{|M}$, projecting to $(e_0,e_1, \ldots, e_n) \in P_{SO_{0}(n,1)}N_{|M}$. As the spinor bundle is associated to these spin principal bundles, we may write a spinor locally as $\psi=[\tilde{\epsilon}, \tilde\psi]$. Using the local formula for the spinorial connection (cf.~\cite[(2.5)]{BGM}), we perform the following local calculation:
		\begin{align*}
		\overline{\nabla}_X \psi - \nabla_X \psi &= [\tilde{\epsilon}, \del_X \tilde{\psi}] + \frac12 \sum_{0 \leq i < j} \epsilon_i \overline{g}(\overline{\nabla}_X e_i,e_j) e_i \cdot e_j \cdot \psi \\ 
		&\phantom{=}\,-\left([\tilde{\epsilon}, \del_X \tilde{\psi}]+ \frac12 \sum_{1 \leq i < j} \overline{g}(\nabla_X e_i,e_j) e_i \cdot e_j \cdot \psi\right) \\
		&= \frac12 \sum_{0 < j} (-1) g(\overline{\nabla}_X e_0,e_j) e_0 \cdot e_j \cdot \psi \\
		&=-\frac12  e_0 \cdot W(X) \cdot \psi,
		\end{align*}
		where $\epsilon_i = \overline{g}(e_i, e_i) \in \{\pm 1\}$.
	\end{proof}
\end{Lem}

By the way $a$, $R$ and $\langle -,- \rangle$ are defined, it is clear that they are $\nabla$-parallel. The left Clifford multiplication $L \colon TN_{|M} \otimes \hyp M \to \hyp M$ is $\nabla$-parallel as well, where $\nabla$ is defined on $TN_{|M}$ by viewing it as bundle associated to $P_{SO(n)}M$ via the lower map of \eqref{CD:1}. This can be reexpressed by saying that both the restricted left Clifford multiplication $TM \otimes \hyp M \to \hyp M$ and the endomorphism $\hyp M \to \hyp M$ given by left multiplication with $e_0$ are $\nabla$-parallel. As a consequence, the extended right Clifford multiplication $\tilde{R}$ is $\nabla$-parallel as well.

With respect to the other connection, the following can be said.  $a$, $R$ and $L$ are $\overline{\nabla}$-parallel. The scalar product $\langle -,- \rangle$, however, in general is not, as it does not originate from a $\Spin_0(n,1)$-invariant scalar product on $Cl_{n,1}$. Instead, it satisfies the following formula that follows from $\nabla$-parallelism together with \cref{Lem:CompConn}:
\begin{align*}
\del_X \langle \phi, \psi \rangle = \langle \overline{\nabla}_X \phi, \psi \rangle + \langle \phi, \overline{\nabla}_X \psi \rangle + \langle e_0 \cdot W(X) \cdot \phi , \psi \rangle.
\end{align*}

\subsection{\texorpdfstring{$Cl_{n,1}$}{Cl\_\{n,1\}}-linear Dirac-Witten operator and index difference for initial values}
\label{sec:DW-Op}
As in the previous section, let $M$ be a spacelike hypersurface of a space- and time-oriented Lorentzian spin manifold $(N,\overline{g})$. The Dirac-Witten operator is a kind of Dirac operator on the hypersurface spinor bundle. In the case of classical spinor bundles, it was first defined by Witten \cite{Wi} in order to give his spinorial proof of the positive mass theorem (cf. \cite{PT} for a rigorous formulation of the proof) and later studied in more detail by Hijazi and Zhang \cite{HZ}. We are interested in its $Cl_{n,1}$-linear version and use it to define a kind of index difference for initial values. Furthermore, we compare it to the $Cl_{n,1}$-linear Dirac operator, which will be of later use.

\begin{Def}
	The composition
	\begin{align*}
	\overline{D} \colon \Gamma(\hyp M) \overset{\overline\nabla}{\lto} \Gamma(T^*M \otimes \hyp M) &\overset{\sharp \otimes \id}{\lto} \Gamma(TM \otimes \hyp M) \overset{L}{\lto} \Gamma(\hyp M),
	\end{align*}
	where $L$ is the left Clifford multiplication, defines the \emph{$Cl_{n,1}$-linear Dirac-Witten operator}. The composition (with $\overline\nabla$ replaced by $\nabla$)
	\begin{align*}
	D \colon \Gamma(\hyp M) \overset{\nabla}{\lto} \Gamma(T^*M \otimes \hyp M) &\overset{\sharp \otimes \id}{\lto} \Gamma(TM \otimes \hyp M) \overset{L}{\lto} \Gamma(\hyp M)
	\end{align*}
	is the \emph{$Cl_{n,1}$-linear Dirac operator}.
\end{Def}

The following lemma justifies the names of these operators. It is a direct consequence of the parallelism discussion at the end of the last section.
\begin{Lem}
	$\overline{D}$ and $D$ are both $Cl_{n,1}$-linear with respect to the right Clifford multiplication $R$ and odd with respect to $a$. Furthermore, $D$ is $Cl_{n+1,1}$-linear with respect to the extended right Clifford multiplication $\tilde{R}$.
\end{Lem}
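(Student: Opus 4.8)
The plan is to read off all three assertions from the parallelism statements collected at the end of the previous subsection, together with the elementary Clifford (anti-)commutation relations, using throughout that the musical isomorphism $\sharp$ acts only on the $T^*M$-factor and hence commutes with every bundle endomorphism of $\hyp M$ that acts on the spinor factor.

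First, for $Cl_{n,1}$-linearity of $\overline{D}$, fix $v \in \R^{n,1}$. The right Clifford multiplication $R(v)$ is a bundle endomorphism of $\hyp M$ that is $\overline{\nabla}$-parallel and commutes with the left Clifford multiplication $L(X)$ by every $X \in TM$. Hence, for $\psi \in \Gamma(\hyp M)$,
\begin{align*}
	\overline{D}(R(v)\psi) &= L\bigl((\sharp \otimes \id)\,\overline{\nabla}(R(v)\psi)\bigr) = L\bigl((\sharp \otimes \id)(\id_{T^*M}\otimes R(v))\,\overline{\nabla}\psi\bigr) \\
	&= R(v)\, L\bigl((\sharp \otimes \id)\,\overline{\nabla}\psi\bigr) = R(v)\,\overline{D}\psi ,
\end{align*}
so $\overline{D}$ commutes with $R(v)$; the identical computation with $\overline{\nabla}$ replaced by the Levi-Civita connection $\nabla$ of $(M,g)$ gives the claim for $D$. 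For the oddness statement, recall that $\alpha$ is both $\overline{\nabla}$- and $\nabla$-parallel but, being the even--odd grading, \emph{anti}commutes with $L(X)$ for $X \in TM$; repeating the computation above with $R(v)$ replaced by $\alpha$ then yields $\overline{D}\alpha = -\alpha\overline{D}$ and $D\alpha = -\alpha D$.

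Second, for $Cl_{n+1,1}$-linearity of $D$ with respect to $\tilde{R}$ it remains, in view of the first part, to check that $D$ commutes with $\tilde{R}(e_{n+1}) = L(e_0)\alpha$ (cf.\ \cref{Prop:Ext}). This endomorphism is $\nabla$-parallel, since $L(e_0)$ and $\alpha$ were recorded above to be $\nabla$-parallel; and it commutes with $L(X)$ for $X \in TM$: indeed $L(e_0)$ anticommutes with $L(X)$ by the Clifford relation $e_0 \cdot X + X \cdot e_0 = -2\overline{g}(e_0,X) = 0$ (as $X \perp e_0$), while $\alpha$ anticommutes with $L(X)$, so the two signs cancel. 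The same computation as in the first part, now with $R(v)$ replaced by $\tilde{R}(e_{n+1})$ and the connection $\nabla$, gives $D\,\tilde{R}(e_{n+1}) = \tilde{R}(e_{n+1})\,D$, which together with commutation with $R(\R^{n,1})$ is precisely $Cl_{n+1,1}$-linearity. The computation is routine; the one point needing care is the bookkeeping of connections — $\tilde{R}(e_{n+1}) = L(e_0)\alpha$ fails to be $\overline{\nabla}$-parallel because $\overline{\nabla}_X e_0 = W(X) \neq 0$ in general (cf.\ \cref{Lem:CompConn}), which is exactly why the same argument does \emph{not} promote $\overline{D}$ to a $Cl_{n+1,1}$-linear operator.
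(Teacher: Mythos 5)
Your proof is correct and takes the same approach the paper merely alludes to when it says the lemma is ``a direct consequence of the parallelism discussion at the end of the last section''; you simply spell out those calculations in full, correctly invoking $\overline{\nabla}$-parallelism of $R$, $\alpha$, $L$ and $\nabla$-parallelism of $\tilde{R}$ together with the (anti-)commutation of each with left Clifford multiplication by $TM$. The closing remark on why $\tilde{R}(e_{n+1})=L(e_0)\alpha$ fails to be $\overline{\nabla}$-parallel is also accurate and correctly explains why $\overline{D}$ is not claimed to be $Cl_{n+1,1}$-linear.
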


\begin{Lem} \label{Lem:DWvsD}
	$\overline{D}=D-\frac12 \tau L(e_0)$ holds, where $\tau = \tr W = \tr k$ is the mean curvature of $M$ in $N$. Both $D$ and $\overline D$ are formally self-adjoint.
	\begin{proof}
		For $\psi \in \Gamma(\hyp M)$ and a local orthonormal frame $e_1, \ldots ,e_n$ we perform the following local calculation applying \cref{Lem:CompConn}:
		\begin{align*}
		\overline D \psi - D \psi &= \sum_{i=1}^n e_i \cdot (\overline\nabla_{e_i}-\nabla_{e_i}) \psi \\
		&=- \frac12 \sum_{i=1}^n e_i \cdot e_0 \cdot W(e_i) \cdot \psi  \\
		&= \phantom{-}\frac12 \sum_{i,j=1}^n g(W(e_i),e_j) e_i \cdot e_j \cdot e_0 \cdot \psi \\
		&=-\frac12 \sum_{i=1}^n g(W(e_i),e_i) e_0 \cdot \psi.
		\end{align*}
		Here, we used that $g(W(e_i),e_j)=k(e_i,e_j)$ is symmetric in $i$ and $j$.
		
		The hypersurface spinor bundle $\hyp M$ together with the connection $\overline{\nabla}$, the (left) Clifford multiplication by $TM$ and scalar product $\langle - , - \rangle$ forms a Clifford bundle, since these structures are compatible as mentioned in the end of the last subsection.
		Since $D$ is the Dirac operator associated to this Clifford bundle, it is formally self-adjoint (cf.~\cite[Prop.~3.11]{Roe}).
		As left multiplication with $e_0$ is self-adjoint as well, the same holds true for $\overline D$.
	\end{proof}
\end{Lem}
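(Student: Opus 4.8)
The plan is to reduce the identity to the comparison of connections from \cref{Lem:CompConn} followed by a short Clifford-algebra computation. Since $\overline D$ and $D$ are defined by the same recipe $L \circ (\sharp \otimes \id) \circ (\,\cdot\,)$ applied to $\overline\nabla$ and $\nabla$ respectively, their difference is $\overline D - D = L \circ (\sharp \otimes \id) \circ (\overline\nabla - \nabla)$. Feeding in the identity $\overline\nabla_X \psi - \nabla_X \psi = -\tfrac12 e_0 \cdot W(X) \cdot \psi$ and evaluating in a local $g$-orthonormal frame $e_1, \dots, e_n$ of $TM$ gives $\overline D \psi - D \psi = -\tfrac12 \sum_{i=1}^n e_i \cdot e_0 \cdot W(e_i) \cdot \psi$.

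The second step is the algebraic simplification of this sum. I would use that $e_0$ is $\overline g$-orthogonal to every $e_i$ and to $W(e_i) \in TM$, so that $e_0$ anticommutes with both Clifford factors; moving $e_0$ to one side and expanding $W(e_i) = \sum_j g(W(e_i), e_j) e_j = \sum_j K(e_i, e_j) e_j$ turns the expression into (a sign times) $e_0 \cdot \sum_{i,j} K(e_i, e_j)\, e_i \cdot e_j \cdot \psi$. The symmetry of $K$ combined with the anticommutativity $e_i \cdot e_j = -e_j \cdot e_i$ for $i \neq j$ kills all off-diagonal terms, while the diagonal terms contribute $K(e_i, e_i)\, e_i \cdot e_i = -K(e_i, e_i)$, so the inner sum collapses to $-\tr K = -\tau$. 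Collecting signs yields $\overline D \psi - D \psi = -\tfrac12 \tau\, e_0 \cdot \psi = -\tfrac12 \tau\, L(e_0)\psi$, which is the claim; that $\tau = \tr W = \tr K$ is immediate from $W = K^\sharp$. The only delicate point is the sign bookkeeping, which should be checked once against the Clifford convention $v \cdot w + w \cdot v = -2\overline g(v, w)$, under which $e_i \cdot e_i = -1$ for spacelike $e_i$ and $e_0 \cdot e_0 = +1$.

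For the self-adjointness assertions I would argue as follows. The operator $D = L \circ (\sharp \otimes \id) \circ \nabla$ is a Dirac-type operator attached to the metric connection $\nabla$, with respect to which the Clifford multiplication $L$ and the scalar product $\langle -,-\rangle$ are parallel (see the parallelism discussion above); hence it is formally self-adjoint by the standard Green's formula argument, where one writes the pointwise expression $\langle D\phi, \psi\rangle - \langle \phi, D\psi\rangle$ as the divergence of a vector field --- using skew-adjointness of $L(X)$ for $X \in TM$ --- and integrates to zero over the closed manifold $M$. For $\overline D$ it then suffices to observe that $L(e_0)$ is pointwise self-adjoint with respect to $\langle -,-\rangle$; this is the defining feature of the Dirac-Witten setting and reflects the fact that $e_0$ spans the negative-definite line of $\R^{n,1}$, so that $L(e_0)^2 = +\id$ in contrast to $L(e_i)^2 = -\id$ for the spacelike directions. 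Since $\tau = \tr K$ is a real-valued function, $\tau\, L(e_0)$ is self-adjoint, and therefore so is $\overline D = D - \tfrac12 \tau\, L(e_0)$.

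I do not expect a real obstacle here: every ingredient is either a consequence of \cref{Lem:CompConn} or a standard fact about Dirac-type operators. The two small points worth verifying explicitly are the sign in the Clifford computation and the self-adjointness (rather than skew-adjointness) of $L(e_0)$, which hinges on the signature convention used for $\hyp M$.
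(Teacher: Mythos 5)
Your argument follows the same route as the paper: reduce via \cref{Lem:CompConn} to $\overline D - D = -\tfrac12 \sum_i e_i \cdot e_0 \cdot W(e_i)$, expand $W(e_i)$ in the frame, kill the off-diagonal terms using the symmetry of $K$ together with Clifford anticommutativity, and obtain self-adjointness from the standard fact for $D$ combined with self-adjointness of $L(e_0)$. The signs check out under the convention $v\cdot w + w\cdot v = -2\overline g(v,w)$ you identify (equivalently, $\epsilon_0 = -1$ in the local connection formula), and your observation that $L(e_0)$ is self-adjoint rather than skew because $e_0$ is the timelike direction is exactly the point the paper implicitly uses.
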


The utility of the Dirac-Witten operator to general relativity results from following observation due to Witten \cite[eqs.\ (24)-(34)]{Wi}. The proof (cf.\ also \cite[Sec.\ 3]{PT}) verbatim applies to the $Cl_{n,1}$-linear version considered here.
\begin{Prop}
	The Dirac-Witten operator satisfies the Schrö\-ding\-er-Lich\-ne\-ro\-wicz type formula
	\begin{align*}
	\overline{D}^2 &= \overline\nabla^*\overline\nabla + \frac12 (\rho-e_0 \cdot j^\sharp \cdot), \\
	\intertext{with}
	2\rho &= \scal + \tau^2 - \|k\|^2 \\
	j &= - \upd \tau + \div k.
	\end{align*}
\end{Prop}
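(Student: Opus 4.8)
The plan is to reduce the statement to the ordinary Schrödinger--Lichnerowicz formula for the intrinsic $Cl_{n,1}$-linear Dirac operator $D$, using the decomposition $\overline D = D - \tfrac12\tau L(e_0)$ of \cref{Lem:DWvsD} together with the comparison of connections in \cref{Lem:CompConn}. This route is self-contained: it needs neither the ambient curvature of $N$ nor the Gauß-Codazzi equations, since $\rho$ and $j$ are here \emph{defined} through the right-hand sides of \eqref{eq:CE}. Recall that $(\hyp M, L_{|TM}, \nabla)$ is -- up to the harmless doubling $\hyp M \cong \Sigma_{Cl}M \otimes_{Cl_n} Cl_{n,1}$ -- the Clifford bundle of the Riemannian spin manifold $(M,g)$ with its spinorial Levi-Civita connection and the $\nabla$-parallel metric $\langle\cdot,\cdot\rangle$; as the curvature of $\nabla$ is induced from $P_{\Spin(n)}M$, the standard Lichnerowicz argument gives $D^2 = \nabla^*\nabla + \tfrac14\scal$, with $\nabla^*\nabla$ the genuine connection Laplacian of $\nabla$.

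I would then square $\overline D = D - \tfrac12\tau L(e_0)$, using that $L(e_0)$ is $\nabla$-parallel, self-adjoint, anti-commutes with $L(X)$ for $X \in TM$, and satisfies $L(e_0)^2 = \id$ (Clifford relation $e_0 \cdot e_0 = -\overline g(e_0,e_0) = 1$): the mixed term collapses to $D\circ(\tau L(e_0)) + (\tau L(e_0))\circ D = L(\grad\tau)\,L(e_0)$ -- the two interior summands cancelling by $e_i\cdot e_0 + e_0\cdot e_i = 0$ -- so that $\overline D^2 = \nabla^*\nabla + \tfrac14\scal + \tfrac14\tau^2 - \tfrac12\,\grad\tau\cdot e_0\cdot$. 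It then remains to replace $\nabla^*\nabla$ by $\overline\nabla^*\overline\nabla$: writing $\overline\nabla_X = \nabla_X + A(X)$ with $A(X) = -\tfrac12\, e_0\cdot W(X)\cdot$ (\cref{Lem:CompConn}), one checks that $A(X)$ is $\langle\cdot,\cdot\rangle$-self-adjoint -- the product of the self-adjoint $L(e_0)$ with the skew-adjoint $L(W(X))$, which anti-commute -- whence the formal $L^2$-adjoint of $\overline\nabla$ is $\overline\nabla^* = \nabla^* + A^*$ with $A^*\omega = \sum_i A(e_i)\,\omega(e_i)$. Composing in a frame synchronous for the Levi-Civita connection of $(M,g)$ at the point in question, the first-order terms cancel and
\[
\overline\nabla^*\overline\nabla \;=\; \nabla^*\nabla - \sum_i (\nabla_{e_i}A)(e_i) + \sum_i A(e_i)^2 \;=\; \nabla^*\nabla + \tfrac12\, e_0\cdot(\div K)^\sharp\cdot + \tfrac14\|K\|^2,
\]
using $\sum_i (\nabla_{e_i}A)(e_i) = -\tfrac12\, e_0\cdot(\div K)^\sharp\cdot$ (since $L(e_0)$ and $L$ are $\nabla$-parallel and $\sum_i(\nabla_{e_i}W)(e_i) = (\div K)^\sharp$) and $\sum_i A(e_i)^2 = \tfrac14\|K\|^2\,\id$ (from $L(e_0)^2 = \id$, $L(W(e_i))^2 = -|W(e_i)|^2$ and $\sum_i |W(e_i)|^2 = \|K\|^2$). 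Substituting this back and regrouping, with $-\tfrac12\,\grad\tau\cdot e_0\cdot = \tfrac12\, e_0\cdot\grad\tau\cdot$, yields $\overline D^2 = \overline\nabla^*\overline\nabla + \tfrac14(\scal + \tau^2 - \|K\|^2) + \tfrac12\, e_0\cdot(\grad\tau - (\div K)^\sharp)\cdot$, which is the asserted identity with $2\rho = \scal + \tau^2 - \|K\|^2$ and $j = -\upd\tau + \div K$.

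The step I expect to be the main obstacle is the passage from $\nabla^*\nabla$ to $\overline\nabla^*\overline\nabla$. One has to recognise that $\overline\nabla^*\overline\nabla$ in the statement must be the composition with the $L^2$-formal adjoint, not the rough Laplacian -- the two genuinely differ because $\overline\nabla$ is not $\langle\cdot,\cdot\rangle$-metric, as recorded by the identity $\del_X\langle\phi,\psi\rangle = \langle\overline\nabla_X\phi,\psi\rangle + \langle\phi,\overline\nabla_X\psi\rangle + \langle e_0\cdot W(X)\cdot\phi,\psi\rangle$ obtained at the end of the preceding subsection -- and then verify that the spurious first-order contributions cancel (on the Weitzenböck side they are exactly the normal component of $\overline\nabla_{e_i}e_j$), all the while respecting the sign convention $W = K^\sharp$ from the footnote to \cref{Lem:CompConn}. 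An alternative would be to run Witten's original computation directly: establish $\overline D^2 = \overline\nabla^*\overline\nabla + \sum_{i<j} e_i\cdot e_j\cdot\overline{\mathcal R}(e_i,e_j)$ with $\overline{\mathcal R}$ the restriction to $M$ of the Lorentzian spinor curvature of $N$, Clifford-contract via the algebraic Bianchi identity -- the extra $e_0$-terms appearing because the Ricci contraction now runs only over the $n$ tangential directions -- and conclude with the Gauß and Codazzi equations, which convert the resulting components of the Einstein tensor of $N$ into $\rho$ and $j$; this only trades the connection-Laplacian bookkeeping for a curvature computation of comparable length.
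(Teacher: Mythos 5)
The paper does not prove this proposition: it is stated as an observation cited to Witten, so there is no in-text argument to compare against. Your proof is correct and complete, and it is worth recording a few points about it.

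Your computation is an intrinsic route that deliberately avoids the ambient curvature of $N$ and the Gauß--Codazzi equations: you square $\overline D = D - \tfrac12\tau L(e_0)$, use the standard Lichnerowicz formula $D^2 = \nabla^*\nabla + \tfrac14\scal$ for the $Cl_{n,1}$-linear Dirac operator (inherited from $\Sigma_{Cl}M$ via the tensoring of \cref{Bem:TensorCl}), and then convert $\nabla^*\nabla$ to $\overline\nabla^*\overline\nabla$ with a Weitzenböck-type comparison. Each step checks out. In particular: $L(e_0)^2 = \id$ since $\overline g(e_0,e_0) = -1$; $L(e_0)$ anti-commutes with $L(X)$ for $X \in TM$; $A(X) = -\tfrac12\, e_0\cdot W(X)\cdot$ is $\langle\cdot,\cdot\rangle$-self-adjoint because it is the product of the self-adjoint $L(e_0)$ with the skew-adjoint $L(W(X))$, and these anti-commute; $\sum_i A(e_i)^2 = \tfrac14\|K\|^2\id$ and $-\sum_i(\nabla_{e_i}A)(e_i) = \tfrac12\,e_0\cdot(\div K)^\sharp\cdot$ follow directly. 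Substituting, the terms regroup exactly as required. Your emphasis on reading $\overline\nabla^*\overline\nabla$ as the composition with the $L^2$-formal adjoint (not the rough Laplacian, which is different here because $\overline\nabla$ is not $\langle\cdot,\cdot\rangle$-metric) is exactly right, and it is moreover the reading that \cref{Kor:Inv} implicitly uses when it equates $(\psi,\overline\nabla^*\overline\nabla\psi)_{L^2}$ with $\|\overline\nabla\psi\|^2_{L^2}$.

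One minor remark: your calculation produces $2\rho = \scal + \tau^2 - \|K\|^2$, which matches the defining constraint equation \eqref{eq:CE} with $\tau = \tr K$; the statement of the proposition prints $\tau$ in place of $\tau^2$, which is evidently a typo. Your alternative sketch (running Witten's computation through the ambient spinor curvature, contracting, and invoking Gauß--Codazzi) is the route that makes contact with the Einstein tensor of $N$ and would be the natural one if $\rho$ and $j$ were defined extrinsically; since the paper takes \eqref{eq:CE} as the definition, your first argument is the more economical of the two.
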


The Dirac-Witten operator $\overline{D}$ is elliptic, in fact it has the same principal symbol as the Dirac operator $D$.
So it possesses good functional analytic properties, some of which we will state below.
From now on, we assume that $M$ is compact.

\begin{Kor} \label{Kor:Inv}
	If the pair $(g,k)$ satisfies the strict dominant energy condition, i.e.\ if $\rho > \|j\|$, then $\overline D$ has zero kernel.
	\begin{proof}
		For any smooth section $\psi \in \Gamma(\hyp M)$ with $\psi \not\equiv 0$ 
		\begin{align*}
		\|\overline D \psi\|^2_{L^2} &=(\psi, \overline D \overline D \psi) = \|\overline\nabla \psi\|^2_{L^2} + \frac12 (\psi, \rho \psi)- \frac12 (\psi, e_0 \cdot j^\sharp \cdot \psi) \\
		& \geq \frac12 (\psi, \rho \psi)- \frac12 (\psi, \|j\| \psi) =\frac12 (\psi, (\rho - \|j\|) \psi) >0
		\end{align*}
		holds as $|\langle \psi, e_0 \cdot j^\sharp \cdot \psi \rangle| \leq \|j\| \|\psi\|^2$. Here, $\|-\|$ (without subscript $L^2$) denotes the pointwise norm.
		The claim follows, since the kernel of the elliptic differential operator $\overline{D}$ consists of smooth sections, see also \cref{Prop:EigSp} below.
	\end{proof}
\end{Kor}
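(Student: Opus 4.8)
The plan is to convert the Schrödinger--Lichnerowicz type formula for $\overline{D}^2$ into an $L^2$-coercivity estimate for $\overline{D}$, exploiting that $M$ is compact; triviality of the kernel is then immediate. First I would run the standard Bochner argument. Any $\psi \in \ker \overline{D}$ is smooth by elliptic regularity, and by formal self-adjointness of $\overline{D}$ (\cref{Lem:DWvsD}) together with the Weitzenböck formula one obtains
\begin{align*}
0 = \|\overline{D}\psi\|_{L^2}^2 = (\psi, \overline{D}^2 \psi) = \|\overline{\nabla}\psi\|_{L^2}^2 + \frac12 (\psi, \rho\,\psi) - \frac12 (\psi, e_0 \cdot j^\sharp \cdot \psi),
\end{align*}
in which the connection-Laplacian term $\|\overline{\nabla}\psi\|_{L^2}^2$ is manifestly non-negative and may be discarded.

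The one point that requires genuine care --- and the step I regard as the main obstacle --- is the pointwise estimate $|\langle \psi, e_0 \cdot j^\sharp \cdot \psi\rangle| \le \|j\|\,\|\psi\|^2$, since the scalar product $\langle -,- \rangle$ on $\hyp M$ is not $\Spin_0(n,1)$-invariant and one must track the adjointness behaviour of left Clifford multiplication with respect to it. Concretely, multiplication by the timelike normal $e_0$ is self-adjoint with $(e_0\cdot)^2 = \id$, hence a pointwise isometry, while multiplication by the spacelike tangent vector $j^\sharp$ is skew-adjoint with $(j^\sharp\cdot)^2 = -\|j\|^2\id$, so $\|j^\sharp\cdot\psi\| = \|j\|\,\|\psi\|$; Cauchy--Schwarz then gives the bound (equivalently, $e_0\cdot j^\sharp\cdot$ is self-adjoint and squares to $\|j\|^2\id$, so it has pointwise operator norm $\|j\|$).

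Combining this with the identity above yields
\begin{align*}
0 = \|\overline{D}\psi\|_{L^2}^2 \ge \frac12 \int_M (\rho - \|j\|)\,\|\psi\|^2 \, \dvol^g.
\end{align*}
Since $\rho > \|j\|$ holds pointwise and $M$ is compact, there is a constant $c>0$ with $\rho - \|j\| \ge c$, which forces $\psi \equiv 0$ and hence $\ker \overline{D} = 0$. (The same computation applied to arbitrary smooth $\psi$ gives the coercivity estimate $\|\overline{D}\psi\|_{L^2}^2 \ge \frac{c}{2}\|\psi\|_{L^2}^2$, so in fact $\overline{D}$ is invertible, not merely injective.)
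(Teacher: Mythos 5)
Your proposal is correct and follows essentially the same route as the paper: apply the Schrödinger--Lichnerowicz formula for $\overline{D}^2$, drop the non-negative $\|\overline{\nabla}\psi\|^2_{L^2}$ term, use the pointwise bound $|\langle\psi, e_0\cdot j^\sharp\cdot\psi\rangle| \le \|j\|\,\|\psi\|^2$, and invoke strictness of the inequality. The only difference is that you supply the justification for the pointwise bound (self-adjointness and $(e_0\cdot j^\sharp\cdot)^2 = \|j\|^2\id$), which the paper asserts without comment, and you note the resulting coercivity estimate, which is indeed what underlies the invertibility statement in \cref{Kor:WellDef}.
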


\begin{Prop} \label{Prop:EigSp}
	$\overline D$ and $D$ extend to densely defined self-adjoint operators
	\begin{align*}
	D,\overline{D} \colon L^2(M,\hyp M) \supseteq H^1(M,\hyp M) \to L^2(M,\hyp M)
	\end{align*}
	admitting a spectral decomposition with discrete spectrum and finite dimensional eigen\-spaces consisting of smooth sections.
	\begin{proof}
		This is true for any formally self-adjoint elliptic differential operator of order one, for example cf.~\cite[Thm.~III.5.2 and Thm.~III.5.8]{LM}.
	\end{proof}
\end{Prop}

\begin{Kor} \label{Kor:WellDef}
	If $n=\dim(M)>0$ and $\overline H\coloneqq L^2(M,\hyp M)$, then there are well-defined elements
	\begin{align*}
	\overline{F}&\coloneqq \frac{\overline D}{\sqrt{1+{\overline D}^2}} \in \Fred^{n,1}(\overline H)
	\intertext{and}
	F&\coloneqq \frac{D}{\sqrt{1+{D}^2}} \in \Fred^{n+1,1}(\overline H) \subseteq \Fred^{n,1}(\overline H).
	\end{align*} Furthermore, $\overline F$ is invertible if $(g,k)$ satisfies the strict dominant energy condition and $F$ is invertible if $g$ has positive scalar curvature.
	\begin{proof}
		$\overline{H}$ is ample as $Cl_{n+1,1}$-Hilbert space, so it is ample as $Cl_{n,1}$-Hilbert space with the restricted Clifford action as well. As $\overline{D}$ is odd and $Cl_{n,1}$-linear, so is $\overline{F}$. From \Cref{Prop:EigSp} above, we conclude that $\overline F$ is a Fredholm operator. The additional condition in the case $n-1 \equiv -1 \mod 4$ is again a consequence of the discussion of the spectral asymptotics in the appendix.
		Invertibility for $(g,k)$ satisfying the strict dominant energy condition follows from \cref{Kor:Inv} and $\coker \overline F=\ker \overline F$. The argumentation for $F$ is completely analogous. Invertibility here uses the classical Schrödinger-Lichnerowicz formula.
	\end{proof}
\end{Kor}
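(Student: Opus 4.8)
The plan is to assemble the statement from the preceding proposition on the spectral decomposition of $D$ and $\overline D$ together with \cref{Lem:DWvsD}, \cref{Prop:Ext} and \cref{Kor:Inv}, treating $F$ and $\overline F$ in parallel. Since $D$ and $\overline D$ are formally self-adjoint (\cref{Lem:DWvsD}) and extend to self-adjoint $L^2$-operators with discrete spectrum tending to infinity and finite-dimensional eigenspaces, functional calculus shows that $\overline F = \overline D(1+\overline D^2)^{-1/2}$ and $F = D(1+D^2)^{-1/2}$ are well-defined bounded self-adjoint operators with $1-\overline F^2 = (1+\overline D^2)^{-1}$ and $1-F^2 = (1+D^2)^{-1}$ compact; hence both are Fredholm. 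As each is a bounded even function of $\overline D^2$ (resp.\ $D^2$) multiplied by $\overline D$ (resp.\ $D$), it inherits oddness with respect to $\alpha$ and the Clifford-linearity recorded in \cref{Lem:DWvsD}: $\overline F$ is $Cl_{n,1}$-linear with respect to $R$, and $F$ is $Cl_{n+1,1}$-linear with respect to the extended multiplication $\tilde R$ of \cref{Prop:Ext}.

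For the Clifford-algebraic side conditions I would argue as follows. By \cref{Prop:Ext}, $(\overline H,\alpha,\tilde R)$ is an ample $Cl_{n+1,1}$-Hilbert space; since its Clifford action extends the $Cl_{n,1}$-action, $(\overline H,\alpha,R)$ is ample as a $Cl_{n,1}$-Hilbert space and the additional condition for $F$ in the pair $(n,1)$ is automatic, so that $F \in \Fred^{n+1,1}(\overline H) \subseteq \Fred^{n,1}(\overline H)$ once $F \in \Fred^{n+1,1}(\overline H)$ is known. The only additional conditions left to check directly are the one for $F$ in the pair $(n+1,1)$ when $n \equiv 3 \mod 4$ and the one for $\overline F$ in the pair $(n,1)$ when $n \equiv 0 \mod 4$; in both cases $D$ (resp.\ $\overline D$) is a generalized Dirac operator, so the Weyl asymptotics provide infinitely many eigenvalues of each sign and prevent $\omega_{n+1,1} F \alpha$ (resp.\ $\omega_{n,1}\overline F\alpha$) from being essentially positive or essentially negative.

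It remains to treat invertibility. A self-adjoint Fredholm operator $T$ satisfies $\coker T \cong \ker T^* = \ker T$, so $\overline F$ (resp.\ $F$) is invertible precisely when $\ker \overline D = 0$ (resp.\ $\ker D = 0$). For $\overline F$ this is exactly \cref{Kor:Inv} under the strict dominant energy condition. For $F$ one notes that $D$ is the genuine Dirac operator of $\hyp M$ for the connection induced by the Levi-Civita connection of $(M,g)$ --- equivalently, by \cref{Bem:TensorCl}, two copies of the $Cl_n$-linear Dirac operator of $\Sigma_{Cl}M$ --- so the classical Schrödinger-Lichnerowicz formula $D^2 = \nabla^*\nabla + \frac14\scal$ applies and forces $\ker D = 0$ whenever $\scal^g > 0$. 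Since the analytic substance (the spectral decomposition and the Witten-type estimate behind \cref{Kor:Inv}) is already in hand, the only point I expect to demand genuine care is this Clifford-algebraic bookkeeping in the borderline residue classes, not the functional calculus.
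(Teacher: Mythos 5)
Your proposal is correct and follows essentially the same route as the paper's proof: Fredholmness from the spectral-decomposition proposition via functional calculus, ampleness inherited from the $Cl_{n+1,1}$-structure of \cref{Prop:Ext}, Weyl asymptotics for the side condition in the residue class $n-k\equiv -1 \pmod 4$, and invertibility via $\coker=\ker$ combined with \cref{Kor:Inv} for $\overline F$ and the classical Schrödinger--Lichnerowicz formula for $F$. You merely spell out the functional-calculus and Clifford-algebraic bookkeeping (including the precise residue classes and the fact that the containment $\Fred^{n+1,1}\subseteq\Fred^{n,1}$ disposes of the $(n,1)$-condition for $F$ automatically) that the paper's terse proof leaves implicit.
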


If the mean curvature $\tau$ is constant, we can relate the spectral decompositions of $\overline D$ and $D$ and refine the invertibility result.
\begin{Prop} \label{Prop:EigVal}
	The spectral decomposition of $D$ can be written as
	\begin{align*}
	D = \sum_{k=0}^\infty \lambda_k \pi_{E_k} + \sum_{k=0}^\infty (-\lambda_k) \pi_{a(E_k)} 
	\end{align*}
	where all $\lambda_k >0$ are pairwise disjoint and $\pi_{E_k}$ and $\pi_{a(E_k)}$ are the orthogonal projections on the finite dimensional subspaces $E_k$ and $a(E_k)$, respectively. If the mean curvature $\tau$ is constant, then there are decompositions $F_k \oplus a(F_k)=E_k \oplus a(E_k)$ for all $k \geq 0$ and $K \oplus a(K) = \ker D$ such that the spectral decomposition of $\overline D$ is given by
	\begin{align*}
	\overline D = \sum_{k=0}^\infty \sqrt{\lambda_k^2+\frac14 \tau^2}\; \pi_{F_k} + \sum_{k=0}^\infty \left(-\sqrt{\lambda_k^2+ \frac14 \tau^2}\right) \pi_{a(F_k)} + \frac12 \tau \pi_k - \frac12 \tau \pi_{a(K)}
	\end{align*}
	In particular, $\overline D$ is invertible for all constants $\tau \neq 0$.
	\begin{proof}
		As $a$ anti-commutes with $D$, for any eigenvector $\phi$ to the eigenvalue $\lambda$
		\begin{align*}
		D a(\phi) = -a(D\phi) =-a(\lambda \phi)=-\lambda a(\phi).
		\end{align*}
		So $a(\phi)$ is an eigenvector to the eigenvalue $-\lambda$. This implies that the spectral decomposition can be written in the stated form. With the same argument, we observe that the spectral decomposition of $\overline D$ to be of that form.
		
		$\tilde R$ commutes with $D$, so the eigenspaces are invariant under $\tilde R(v)$ for all $v \in\R^{n+1,1}$. In particular,
		\begin{align*}
		a(E_k) = \tilde R(e_{n+1}) a(E_k) = L(e_0)(E_k)
		\end{align*} 
		for all $k \geq 0$. Thus we can identify $E_k$ with $a(E_k)$ via the map $E_k \to a(E_k),\; \phi \mapsto L(e_0)(\phi)$ and get $E_k \oplus a(E_k) \cong E_k \oplus E_k \cong E_k \otimes \R^2$. Under this identification, by \cref{Lem:DWvsD}, the restriction of the Dirac-Witten operator corresponds to
		\begin{align*}
		\id_{E_k} \otimes \begin{pmatrix} \lambda_k & -\frac12 \tau \\ -\frac12 \tau & -\lambda_k \end{pmatrix}.
		\end{align*}
		The characteristic polynomial of the $2 \times 2$-matrix is $x^2-\lambda_k^2-\frac14 \tau^2$, so it is diagonalizable with eigenvalues $\pm \sqrt{\lambda_k^2+\frac14 \tau^2}$. This gives rise to a diagonalization of $\overline{D}_{|E_k \oplus a E_k}$ with the same eigenvalues, and we call the positive eigenspace $F_k$.
		
		Now, we turn our attention to $\ker D$. As $L(e_0)=\tilde R(e_{n+1}) a$ anti-commutes with $D$, $L(e_0)$ operates on $\ker D$. This operation is self-adjoint and squares to $\id_{\ker D}$, so by the spectral theorem $L(e_0)_{|\ker D}$ is diagonalizable and its eigenvalues must be contained in $\{1, -1\}$. Let $K$ be the $-1$-eigenspace. Then $a(K)$ is the $1$-eigenspace. Due to 
		\begin{align*}
		\overline D_{|\ker D} = -\frac12 \tau L(e_0)_{|\ker D},
		\end{align*}
		$K$ and $a(K)$ become the $\frac12 \tau$- and $-\frac12 \tau$-eigenspaces of $\overline D$, respectively.
	\end{proof}
\end{Prop}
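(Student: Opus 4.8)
The plan is to diagonalise $D$ first and then read off the spectral decomposition of $\overline D = D - \tfrac12\tau L(e_0)$ (Lemma~\ref{Lem:DWvsD}) by a block-diagonal argument on the eigenspaces of $D$. Recall from the preceding results that, $M$ being compact, $D$ and $\overline D$ are self-adjoint with discrete spectrum and finite-dimensional eigenspaces. Since $\alpha$ is odd for $D$, i.e.\ $\alpha D = -D\alpha$, it carries the $\mu$-eigenspace of $D$ isomorphically onto the $(-\mu)$-eigenspace; in particular $\ker D$ is $\alpha$-invariant. Listing the positive eigenvalues as pairwise distinct reals $\lambda_0, \lambda_1, \dots$ and letting $E_k$ be the $\lambda_k$-eigenspace, the $(-\lambda_k)$-eigenspace is $\alpha(E_k)$, which gives the asserted form of the spectral decomposition of $D$.

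Now assume $\tau$ is constant. Because $D$ is $Cl_{n+1,1}$-linear with respect to $\tilde R$, every eigenspace of $D$ is $\tilde R(v)$-invariant for all $v \in \R^{n+2}$; applying this to $\tilde R(e_{n+1})$ and using $\tilde R(e_{n+1}) = L(e_0)\alpha$ from Proposition~\ref{Prop:Ext}, one gets $L(e_0)(E_k) = \tilde R(e_{n+1})\bigl(\alpha(E_k)\bigr) = \alpha(E_k)$, so $L(e_0)$ restricts to an involution (note $e_0 \cdot e_0 = 1$ in $Cl_{n,1}$) swapping $E_k$ and $\alpha(E_k)$. Hence $V_k \coloneqq E_k \oplus \alpha(E_k)$ is invariant under $D$, under $L(e_0)$, and — because $\tau$ is constant — under $\overline D$. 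Identifying $\alpha(E_k)$ with $E_k$ via $L(e_0)$, i.e.\ writing $V_k \cong E_k \otimes \R^2$, the operator $D$ acts as $\id_{E_k} \otimes \left(\begin{smallmatrix}\lambda_k & 0 \\ 0 & -\lambda_k\end{smallmatrix}\right)$ and $L(e_0)$ as $\id_{E_k} \otimes \left(\begin{smallmatrix}0 & 1 \\ 1 & 0\end{smallmatrix}\right)$, so that by Lemma~\ref{Lem:DWvsD}
\[
\overline D|_{V_k} \;\cong\; \id_{E_k} \otimes \begin{pmatrix}\lambda_k & -\tfrac{1}{2}\tau \\ -\tfrac{1}{2}\tau & -\lambda_k\end{pmatrix}.
\]
The $2\times 2$ matrix has characteristic polynomial $x^2 - \lambda_k^2 - \tfrac14\tau^2$, hence eigenvalues $\pm\sqrt{\lambda_k^2 + \tfrac14\tau^2}$; let $F_k$ be the positive eigenspace of $\overline D|_{V_k}$. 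As $\alpha$ also anti-commutes with $\overline D$ (from $\alpha D = -D\alpha$ and $\alpha L(e_0) = -L(e_0)\alpha$) and preserves $V_k$, the negative eigenspace is $\alpha(F_k)$ and $V_k = F_k \oplus \alpha(F_k)$.

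It remains to treat $\ker D$. Since $L(e_0)$ anti-commutes with $D$ it preserves $\ker D$, where it is self-adjoint (Clifford multiplication by the unit timelike $e_0$ is self-adjoint, cf.\ Lemma~\ref{Lem:DWvsD}) and squares to the identity; so $\ker D$ splits into the $(\mp 1)$-eigenspaces of $L(e_0)$. Calling $K$ the $(-1)$-eigenspace, the relation $\alpha L(e_0) = -L(e_0)\alpha$ together with $\alpha$-invariance of $\ker D$ identifies the $(+1)$-eigenspace as $\alpha(K)$, so $\ker D = K \oplus \alpha(K)$; since $\overline D = -\tfrac12\tau L(e_0)$ on $\ker D$, it equals $+\tfrac12\tau$ on $K$ and $-\tfrac12\tau$ on $\alpha(K)$. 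Summing the contributions of $\ker D$ and of all the $V_k$, which together Hilbert-decompose $\overline H$, yields the claimed spectral decomposition of $\overline D$. Finally, if $\tau \neq 0$ then every eigenvalue of $\overline D$ has absolute value at least $\tfrac{|\tau|}{2} > 0$, so $0$ does not lie in the spectrum and $\overline D$ is invertible.

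The one step I expect to need genuine care is the block-diagonalisation: one must check that $V_k$ is really $\overline D$-invariant — this is exactly where the hypothesis of constant $\tau$ enters, since otherwise $\tau \cdot L(e_0)$ is an arbitrary zeroth-order term mixing different eigenspaces — and that the single identification $V_k \cong E_k \otimes \R^2$ via $L(e_0)$ simultaneously puts $D$ into diagonal and $L(e_0)$ into off-diagonal form. Everything else is bookkeeping with the (anti-)commutation relations among $D$, $\alpha$, $L(e_0)$ and $\tilde R$ recorded earlier in this section.
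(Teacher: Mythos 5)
Your proof is correct and follows essentially the same route as the paper's: decompose $D$ by its eigenspaces using the anti-commutation with $\alpha$, use $\tilde R(e_{n+1}) = L(e_0)\alpha$ to identify $\alpha(E_k)$ with $L(e_0)(E_k)$, block-diagonalise $\overline D = D - \tfrac12\tau L(e_0)$ on $E_k\oplus\alpha(E_k) \cong E_k\otimes\R^2$ via the same $2\times 2$ matrix, and treat $\ker D$ by the $\pm1$-eigenspaces of $L(e_0)$. The small clarifications you add (why $V_k$ is $\overline D$-invariant only for constant $\tau$, and that $\alpha$-anti-commutation identifies the negative eigenspace as $\alpha(F_k)$) are sound and consistent with what the paper leaves implicit.
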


\begin{Bem}
	That $\overline D$ is invertible for constant mean curvature $\tau \neq 0$, can also be seen directly from the fact that $D$ anti-commutes with $L(e_0)$: As $L(e_0)^2=\id$,
	\begin{align*}
	\overline D^2 =\left(D-\frac12 \tau L(e_0)\right)^2=D^2+\frac14 \tau^2 \id
	\end{align*}
	and so $\coker \overline D=\ker \overline D=0$.
\end{Bem}

In the remainder of this section, we want to use the $Cl_{n,1}$-linear Dirac-Witten operator to define an index difference for initial values. For this, let $M$ be compact, spin and of dimension $n > 0$. We need no longer assume that it is embedded into a manifold $N$, as we succeeded in expressing all the relevant structures in terms of $M$ and the pair $(g,k)$. In fact, the $Cl_{n,1}$-linear hypersurface spinor bundle $\hyp (M,g) \cong \Sigma_{Cl}(M,g) \otimes_{Cl_n} Cl_{n,1}$
depends on the metric $g$ alone, whereas  its connection $\overline{\nabla}$ and thus its $Cl_{n,1}$-linear Dirac-Witten operator $\overline{D}$ is affected by $k$ as well. 

In analogy to the case of the $\alpha$-difference, we need to compare the spaces of $L^2$-sections of the hypersurface spinor bundles for different initial value pairs $(g,k)$. Adopting the notation from \cref{sec:alpha}, there is a bundle map
\begin{align*}
\sqrt{\beta}P^\nabla \otimes \id_{Cl_{n,1}} \colon \Sigma_{Cl}(M,g_0) \otimes_{Cl_n} Cl_{n,1} \to \Sigma_{Cl}(M,g) \otimes_{Cl_n} Cl_{n,1},
\end{align*}
which induces
\begin{align*}
\overline\Phi_g \colon \overline H \coloneqq L^2(M,\hyp (M,g_0)) &\overset{\cong}{\lto}  L^2(M,\hyp (M,g)).
\end{align*}
This allows to produce a continuous map from initial values to the space of Fredholm operators. 
\begin{Satz}[{cf.\ \cite[Thm.\ 3.19]{MA}}] \label{Thm:IniFred}
	The map
	\begin{align*}
	(\Ini(M),\DEC(M)) &\lto (\Fred^{n,1}(\overline{H}), G^{n,1}(\overline{H})) \\
	(g,k) &\lmapsto \overline\Phi_g^{-1} \circ \frac{\overline{D}_{(g,k)}}{\sqrt{1+\overline{D}_{(g,k)}^2}} \circ \overline\Phi_g
	\end{align*}
	is well-defined and continuous with respect to the $C^1$-topology on the space of smooth initial value pairs $\Ini(M)$. In particular, it is continuous if $\Ini(M)$ carries the $C^\infty$-topology.
\end{Satz}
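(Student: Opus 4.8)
The plan is to reduce this statement to its classical counterpart, \cref{Thm:MetFred}, via the tensor description $\hyp(M,g) \cong \Sigma_{Cl}(M,g) \otimes_{Cl_n} Cl_{n,1}$ from \cref{Bem:TensorCl}, and then to handle the extra dependence on $K$ by a direct perturbation estimate. First I would observe that the map in question factors: by \cref{Lem:DWvsD} we have $\overline{D}_{(g,K)} = D_{(g,K)} - \frac12 \tau(K) L(e_0)$ where $\tau = \tr_g K$ and $D_{(g,K)}$ is the $Cl_{n,1}$-linear Dirac operator of $\hyp(M,g)$. Under the identification $\hyp(M,g) \cong \Sigma_{Cl}(M,g) \otimes_{Cl_n} Cl_{n,1}$, the operator $D_{(g,K)}$ is just $D_g \otimes_{Cl_n} \id$, which depends on $g$ alone, and $\overline\Phi_g = \Phi_g \otimes_{Cl_n} \id_{Cl_{n,1}}$. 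Hence $\overline\Phi_g^{-1} D_{(g,K)} \overline\Phi_g = (\Phi_g^{-1} D_g \Phi_g) \otimes_{Cl_n} \id$, and continuity in $g$ (in $C^1$) of the conjugated unbounded operators and their bounded transforms is inherited from the classical case treated in \cite[Thm.~2.22]{MA} / \cref{Thm:MetFred}; tensoring with a fixed finite-dimensional module preserves continuity.

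Next I would deal with the perturbation term. The operator $L(e_0)$ is a fixed bundle endomorphism of $\hyp(M,g_0)$ (after conjugation by $\overline\Phi_g$ it is replaced by $\overline\Phi_g^{-1} L(e_0^g) \overline\Phi_g$, which again depends continuously on $g$ in $C^1$, since $e_0$ is just the formal normal direction carried along by parallel transport), and $\tau(K) = \tr_g K \in \GF(M)$ depends continuously on $(g,K)$ in the $C^0$-topology, hence a fortiori in $C^1$. So the conjugated unbounded operator $\overline\Phi_g^{-1}\overline{D}_{(g,K)}\overline\Phi_g$ is a continuous family of generalized Dirac operators in the sense of Roe, differing from the $g$-only family by the bounded, continuously-varying zeroth-order term $-\frac12 \tau(K)\, \overline\Phi_g^{-1}L(e_0^g)\overline\Phi_g$. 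The standard fact that $D \mapsto D/\sqrt{1+D^2}$ is norm-continuous on families of self-adjoint operators that are uniformly locally elliptic with continuously varying zeroth-order parts (again \cite[Thm.~3.19]{MA} covers exactly this) then gives continuity of $(g,K) \mapsto \overline\Phi_g^{-1}\overline{F}_{(g,K)}\overline\Phi_g$ into the operator-norm topology.

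It remains to check well-definedness, i.e.\ that the image lands in $\Fred^{n,1}(\overline H)$ and that $\DEC(M)$ is sent into $G^{n,1}(\overline H)$. The first is \cref{Kor:WellDef}: $\overline F_{(g,K)} \in \Fred^{n,1}(\overline H)$ for every $(g,K)$, and conjugation by the $Cl_{n,1}$-Hilbert space isomorphism $\overline\Phi_g$ preserves this (it preserves $\alpha$ and $\tilde R|_{\R^{n,1}}$ by construction). The second is the invertibility clause of \cref{Kor:WellDef} together with $\coker\overline F = \ker\overline F$ for self-adjoint Fredholm operators: if $(g,K) \in \DEC(M)$ then $\rho > \|j\|$, so by \cref{Kor:Inv} $\ker\overline D_{(g,K)} = 0$, hence $\overline F_{(g,K)}$ and its conjugate are invertible.

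The main obstacle is the careful bookkeeping around the metric-dependence of everything: the spinor bundle, the $L^2$-inner product (the Radon--Nikodym factor $\beta$), the bundle endomorphism $L(e_0^g)$, and the connection all vary with $g$, and one must verify that the Bär--Gauduchon--Moroianu trivialization $\overline\Phi_g = \Phi_g \otimes \id$ identifies $\tilde R|_{\R^{n,1}}$ and $\alpha$ with the fixed reference structures on $\overline H$ so that the image really consists of \emph{$Cl_{n,1}$-linear} Fredholm operators on one fixed Hilbert space. But all of this is a straightforward adaptation of the argument for \cref{Thm:MetFred}, with the only genuinely new input being the harmless bounded term $-\frac12 \tau(K) L(e_0)$; I would simply cite \cite[Thm.~3.19]{MA} for the analytic continuity statement and spend the proof text on the reduction and on the $K$-dependence of $\tau$.
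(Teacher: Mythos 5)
The paper itself gives no proof of this statement beyond the bracketed citation \enquote{cf.\ \cite[Thm.\ 3.19]{MA}}, so there is no in-paper argument to compare against. Your sketch is a plausible reconstruction of the argument behind that citation, and the essential reduction you identify is correct: by \cref{Lem:DWvsD}, the $K$-dependence of $\overline D_{(g,K)}$ enters only through the bounded multiplication operator $-\tfrac12(\tr_g K)\,L(e_0)$, and the $g$-dependence, after the Bär--Gauduchon--Moroianu trivialization $\overline\Phi_g = \Phi_g \otimes_{Cl_n}\id_{Cl_{n,1}}$, is that of $D_g\otimes\id$, which \cref{Thm:MetFred} already controls. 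Well-definedness via \cref{Kor:WellDef} and invertibility over $\DEC(M)$ via \cref{Kor:Inv} are exactly the right inputs.

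One small correction: in the passage where you say $\overline\Phi_g^{-1} L(e_0^g)\overline\Phi_g$ \enquote{again depends continuously on $g$,} the situation is in fact cleaner than that. Since $\hyp M = P_{\Spin(n)}M \times_{\ell i} Cl_{n,1}$, left multiplication by $e_0$ is induced by a fixed $\Spin(n)$-equivariant endomorphism of $Cl_{n,1}$ and hence commutes with $\overline\Phi_g$; that is, $\overline\Phi_g^{-1} L(e_0)\overline\Phi_g = L(e_0)$ identically. This identity (not merely continuity) is what the paper uses implicitly in diagram \eqref{eq:diag1} in the proof of the main theorem, where $\Phi_{g}^{-1}(\cdot - tL(e_0))\Phi_{g} = \Phi_g^{-1}(\cdot)\Phi_g - tL(e_0)$. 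Beyond that, your sketch correctly defers the genuinely analytic step --- norm-continuity of $D\mapsto D/\sqrt{1+D^2}$ for a $C^1$-continuous family of Dirac-type operators with continuously varying bounded zeroth-order parts --- to the reference, which is also all the paper does.
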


\begin{Def}
	The \emph{$\overline{\alpha}$-difference} is defined by the composition
	\begin{align*}
	\oladiff \colon \pi_k(\DEC(M),(g_0,k_0)) &\cong \pi_{k+1}(\Ini(M),\DEC(M),(g_0,k_0)) \\ &\to \pi_{k+1}(\Fred^{n,1}(\overline H),G^{n,1}(\overline H),\overline{F}_{g_0, k_0}) \overset{\ind}{\lto} KO^{n-k}(\{*\}),
	\end{align*}
	where the middle map is the one from \cref{Thm:IniFred}.
\end{Def}

In the next chapter, $\oladiff$ will be compared to the $\alpha$-difference. The first step will be to establish a comparison map between the space of metrics of positive scalar curvature and the space of initial value pairs satisfying the dominant energy condition strictly.

\section{Comparing the index differences} 
\subsection{Positive scalar curvature and initial values} \label{sec:Susp}
In the following, $M$ is a compact smooth manifold of dimension $n \geq 2$. The aim of this section is to construct a continuous map $\Phi \colon \Susp \PSC(M) \longrightarrow \DEC(M)$, which will be used later to relate the index differences.

\begin{Lem} \label{Lem:map}
	For every $C>0$, the function
	\begin{align*}
	\tau \colon \Met(M) &\lto \R \\
	g &\lmapsto \sqrt{\frac{n}{n-1} \max\{0, \sup_{x \in M} -\scal^g(x)\}}+C
	\end{align*}
	is continuous.
\end{Lem}

\begin{Prop} \label{Prop:map1}
	For any $C>0$, the following is a well-defined continuous map of pairs:
	\begin{align*}
	\phi \colon (\Met(M),\PSC(M)) \times (I, \del I)  &\lto (\Ini(M),\DEC(M)) \\
	(g,t) &\lmapsto \left(g,\frac{\tau(g)}{n} t g \right).
	\end{align*}
	Moreover, its homotopy class $[\phi] \in [(\Met(M),\PSC(M)) \times (I, \del I)\,,\, (\Ini(M),\DEC(M))]$ is independent of $C>0$.
	\begin{proof}
		Continuity directly follows from the lemma above. Moreover, varying the parameter $C>0$ defines a continuous homotopy between different such maps. Thus, it only remains to prove that $\Met(M) \times \del I \cup \PSC(M) \times I$ is mapped into $\DEC(M)$. To this aim, we first observe that for a pair of the form $(g, \frac{\tau}{n} g)$ with $\tau \in \R$
		\begin{align*}
		2\rho &=  \scal + \frac{n-1}{n} \tau^2 \\
		j &= \frac{1-n}{n} \grad \tau = 0
		\end{align*}
		holds. Hence, such a pair fulfills the strict dominant energy condition if and only if 
		\begin{align*}
		\tau^2 > - \frac{n}{n-1} \scal.
		\end{align*}
		But by definition of the function $\tau$, this is the case for $\left(g,  \pm \frac{\tau(g)}{n} g \right)$, which shows that $\Met(M) \times \del I $ maps into $\DEC(M)$. Moreover, the condition is automatically satisfied if $g$ has positive scalar curvature, so $\PSC(M) \times I$ is sent to $\DEC(M)$ as well. 
	\end{proof}
\end{Prop}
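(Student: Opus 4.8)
The plan is to verify the three assertions of the proposition in turn: continuity of $\phi$, the fact that $\phi$ is a map of pairs, and independence of its homotopy class on the constant $C$.

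For continuity I would view a point of $\Ini(M)$ as a pair consisting of a metric and a symmetric $(0,2)$-tensor. The metric component of $\phi(g,t)$ is just $g$, which is continuous in $(g,t)$, and the tensor component is the tensor field $g$ scaled by the real number $\frac{\tau(g)t}{n}$, which depends continuously on $(g,t)$ by \cref{Lem:map}; since scalar multiplication of smooth tensor fields is jointly continuous for the $C^\infty$-topology, $\phi$ is continuous.

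The heart of the matter is to show that $\Met(M)\times\del I\cup\PSC(M)\times I$ is mapped into $\DEC(M)$. For this I would first compute the energy and momentum densities of a pair of the special form $\bigl(g,\frac{\sigma}{n}g\bigr)$ with $\sigma\in\R$ a \emph{constant}. Since $\tr\bigl(\frac{\sigma}{n}g\bigr)=\sigma$ and $\bigl\|\frac{\sigma}{n}g\bigr\|^2=\frac{\sigma^2}{n}$, the Einstein constraints \eqref{eq:CE} give $2\rho=\scal^g+\sigma^2-\frac{\sigma^2}{n}=\scal^g+\frac{n-1}{n}\sigma^2$, while $K=\frac{\sigma}{n}g$ is parallel with constant trace, so $j=\div K-\upd\tr K=0$. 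Hence, for such a pair, the strict dominant energy condition $\rho>\|j\|$ is equivalent to the pointwise inequality $\sigma^2>-\frac{n}{n-1}\scal^g(x)$ for all $x\in M$, that is, to $\sigma^2>\frac{n}{n-1}\sup_{x\in M}\bigl(-\scal^g(x)\bigr)$. Substituting the relevant values: for $t=\pm1$ the pair $\phi(g,\pm1)$ corresponds to $\sigma=\pm\tau(g)$, and since $C>0$ we have $\tau(g)^2>\frac{n}{n-1}\max\{0,\sup_x(-\scal^g)\}\ge\frac{n}{n-1}\sup_x(-\scal^g)$, so $\Met(M)\times\del I$ lands in $\DEC(M)$; if instead $g\in\PSC(M)$, then $\sup_x(-\scal^g)<0$ (it is attained because $M$ is compact), so the right-hand side is negative whereas $\sigma^2\ge0$ for every $\sigma=\frac{\tau(g)}{n}t$ with $t\in[-1,1]$, whence $\PSC(M)\times I$ also lands in $\DEC(M)$. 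This proves $\phi$ is a map of pairs.

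Finally, for independence of $[\phi]$ on $C$ I would take two positive constants $C_0,C_1$, interpolate affinely by $C_s=(1-s)C_0+sC_1$ (which stays positive), and note that $(g,t,s)\mapsto\bigl(g,\frac{\tau_{C_s}(g)}{n}tg\bigr)$ is continuous with every slice a map of pairs, by the computation above, which used only $C_s>0$; this is the required homotopy of pairs. The only point needing real care is carrying the normalizations in $\rho$ and $j$ through correctly, so that the factor $\frac{n}{n-1}$ there matches the one built into the definition of $\tau$, and checking that the inequality stays \emph{strict} in all boundary cases — which is exactly what the extra summand $+C$ with $C>0$ guarantees.
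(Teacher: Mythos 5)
Your proposal is correct and follows essentially the same approach as the paper: you compute $\rho$ and $j$ for a pair of the form $(g,\frac{\sigma}{n}g)$ with constant $\sigma$, reduce the strict dominant energy condition to $\sigma^2>-\frac{n}{n-1}\scal^g$, and then verify this inequality on $\Met(M)\times\del I$ using $C>0$ and on $\PSC(M)\times I$ using positivity of scalar curvature, with the homotopy in $C$ handling the last assertion. The only difference is that you spell out the computations of $\tr K$, $\|K\|^2$, and $j$ a bit more explicitly, but the substance is identical.
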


\begin{Prop} \label{Prop:map}
	Let $C>0$ and $h \in \Met(M)$ a Riemannian metric. Then the composition
	\begin{align*}
	\Phi \colon \Susp \PSC(M) &\lto \Met(M) \times \del I \cup \PSC(M) \times I \overset{\phi}{\lto} \DEC(M), \\
	\intertext{where the first map is given by}
	[g,t] &\lmapsto
	\begin{cases}
	\left((-2t-1) h + 2(1+t)g, -1 \right) & t \in [-1,-\Frac12]\\
	(g, 2t) & t \in [-\Frac12 , \Frac12]\\
	\left((2t-1) h + 2(1-t)g, 1 \right) & t \in [\Frac12, 1],
	\end{cases}
	\end{align*}
	is a well-defined, continuous map. Its homotopy class is independent of $C>0$ and $h \in \Met(M)$.
	\begin{proof}
		By the previous proposition, we just need to study the first map: Plugging in $t=\pm \frac12$, we see that the different definitions agree on the intersections, and for the special values $t= \pm 1$ we observe that the result is independent of $g$, i.e.\ the map descends to the suspension. This shows well-definedness. Continuity can now be checked on each domain of definition, where it is obvious. Moreover, this map continuously depends on $h \in \Met(M)$, so by connectedness of $\Met(M)$, its homotopy class is independent of $h$. 
	\end{proof}
\end{Prop}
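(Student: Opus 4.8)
The plan is to use the factorization $\Phi = \phi \circ \psi_h$, where $\phi$ is the map of pairs from \cref{Prop:map1} and $\psi_h \colon \Sigma\PSC(M) \to \Met(M) \times \del I \cup \PSC(M) \times I$ denotes the explicit piecewise map written in the statement (the subscript records the dependence on the auxiliary metric $h$). By \cref{Prop:map1}, $\phi$ is already known to be well-defined, continuous, and to have a homotopy class independent of $C$, so the whole task reduces to understanding $\psi_h$ and how it interacts with $\phi$.

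First I would check that $\psi_h$ is well-defined. Viewing $\Sigma\PSC(M)$ as $\PSC(M)\times[-1,1]$ with the two ends $\PSC(M)\times\{\pm1\}$ collapsed to points, there are two things to verify. On the overlaps $t=\pm\tfrac12$ the three branches of the formula must agree: at $t=-\tfrac12$ all three give $(g,-1)$, and at $t=\tfrac12$ all three give $(g,1)$. At $t=\pm1$ the value is $(h,-1)$, respectively $(h,1)$, which is independent of $g$, so the formula is constant on the collapsed subsets and descends to the quotient. One also confirms the image lies in the prescribed subspace: in the middle branch the first coordinate is $g\in\PSC(M)$ and the second lies in $I$, while in the two outer branches the first coordinate is a convex combination of $h$ and $g$, hence still a positive-definite Riemannian metric, and the second coordinate is $\pm1\in\del I$. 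This is precisely why \cref{Prop:map1} only needs $\phi$ to land in $\DEC(M)$ on $\Met(M)\times\del I\cup\PSC(M)\times I$. As a side remark, at $t=0$ one gets $\psi_h[g,0]=(g,0)$ and hence $\Phi[g,0]=(g,0)$, the normalization announced before the statement.

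Continuity is then routine: on each of the closed subintervals $[-1,-\tfrac12]$, $[-\tfrac12,\tfrac12]$, $[\tfrac12,1]$ the formula is jointly continuous in $(g,t)$ — it is a convex combination and/or a scalar rescaling of metrics, continuous in the $C^\infty$-topology — so the pasting lemma makes $\psi_h$ continuous on $\PSC(M)\times[-1,1]$, and being constant on the collapsed subsets it descends to a continuous map on $\Sigma\PSC(M)$ by the universal property of the quotient topology; composing with the continuous restriction of $\phi$ yields continuity of $\Phi$. For $C$-independence, $\psi_h$ does not involve $C$, so a homotopy of pairs between the maps $\phi$ for two values of $C$ (\cref{Prop:map1}) restricts to a homotopy of maps $\Met(M)\times\del I\cup\PSC(M)\times I\to\DEC(M)$, and precomposing with $\psi_h$ gives a homotopy between the two $\Phi$'s. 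For $h$-independence, the formula for $\psi_h$ depends continuously on $h$, and $\Met(M)$ is convex, hence path-connected, so any two choices $h_0,h_1$ are joined by a path $h_s$ and $(\phi\circ\psi_{h_s})_s$ is the required homotopy.

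I do not expect a real obstacle; on top of \cref{Prop:map1} this is bookkeeping. The points that need a little care are keeping the topologies straight — the subspace topology on $\Met(M)\times\del I\cup\PSC(M)\times I$ and the quotient topology on $\Sigma\PSC(M)$, so that ``continuous on each closed piece'' genuinely produces a continuous map — and observing that the outer branches can never leave $\Met(M)$, which is immediate from convexity of the cone of positive-definite symmetric $2$-tensors.
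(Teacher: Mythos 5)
Your proof is correct and follows essentially the same route as the paper's: verify the branches agree at $t=\pm\tfrac12$, observe the values at $t=\pm1$ are independent of $g$ so the formula descends to the suspension, use the pasting lemma for continuity, and invoke continuous dependence on $h$ together with (path-)connectedness of $\Met(M)$ for $h$-independence, while $C$-independence comes for free from \cref{Prop:map1}. You spell out a couple of details the paper leaves tacit (e.g.\ that the outer branches stay in $\Met(M)$ by convexity of the cone of positive-definite metrics), but the argument is the same.
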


\begin{Kor}
	The inclusion $\PSC(M) \to \DEC(M)$, $g \mapsto (g,0)$ is null-homotopic. In particular, if there exists a metric $g_0 \in \PSC(M)$,
	the induced map on homotopy groups $\pi_k(\PSC(M),g_0) \to \pi_k(\DEC(M),(g_0,0))$ is the zero-map for all $k$.
	\begin{proof}
		Using the map defined above, we get a factorization of the inclusion map as follows
		\begin{align*}
		\PSC(M) \hookrightarrow C \PSC(M) \hookrightarrow \Susp \PSC(M) \overset{\Phi}{\lto} \DEC(M),
		\end{align*}
		where the first two maps are the canonical inclusions of a space into the its cone and of the cone into the suspension as upper half. As cones are contractible, the composition is null-homotopic.
	\end{proof}
\end{Kor}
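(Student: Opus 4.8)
The plan is to reuse the comparison map $\Phi\colon \Sigma\PSC(M)\to\DEC(M)$ already built in \cref{Prop:map}. First I would check that $\Phi$ restricted to the equator $\PSC(M)\times\{0\}\subseteq\Sigma\PSC(M)$ is precisely the inclusion under consideration: setting the suspension parameter to $t=0$ lands in the middle branch of the formula defining the first map of $\Phi$, which sends $[g,0]$ to $(g,0)\in\PSC(M)\times I$, and then $\phi$ from \cref{Prop:map1} sends this to $\bigl(g,\frac{\tau(g)}{n}\cdot 0\cdot g\bigr)=(g,0)$. Hence the inclusion $\PSC(M)\to\DEC(M)$, $g\mapsto(g,0)$, factors as the equatorial inclusion $\PSC(M)\hookrightarrow\Sigma\PSC(M)$ followed by the continuous map $\Phi$.

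Next I would invoke the elementary fact that the equatorial inclusion into a suspension is null-homotopic: it factors through the upper cone as $\PSC(M)\hookrightarrow C\PSC(M)\hookrightarrow\Sigma\PSC(M)$, where the first map is the inclusion of the base of the cone and the second is the inclusion of the cone as the upper half of the suspension; since a cone is contractible, this composite — and therefore its composite with $\Phi$ — is null-homotopic. Concretely, one may simply exhibit the null-homotopy $(g,s)\mapsto\Phi([g,s])$ for $s\in[0,1]$ using the suspension coordinate: at $s=0$ this is the inclusion $g\mapsto(g,0)$, and at $s=1$ it is the constant map with value $\Phi([g,1])=\bigl(h,\frac{\tau(h)}{n}h\bigr)$, which is independent of $g$ by the definition of $\Phi$ (this is exactly the value $t=\pm1$ where $\Phi$ descends to the suspension, cf.\ the proof of \cref{Prop:map}), and which lies in $\DEC(M)$ by \cref{Prop:map1}. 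So the homotopy stays inside $\DEC(M)$, as required.

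The ``in particular'' then follows from the standard fact that a null-homotopic map induces the zero map on all homotopy groups: after the usual change-of-basepoint identification along the path $s\mapsto\Phi([g_0,s])$, the induced map $\pi_k(\PSC(M),g_0)\to\pi_k(\DEC(M),(g_0,0))$ factors through $\pi_k(\{\ast\})=0$, and here the basepoints already match since $\Phi$ carries the equator point $g_0$ to $(g_0,0)$. I do not anticipate a real obstacle: all the content has been packed into \cref{Prop:map}, whose continuity and boundary behaviour are precisely what is used. The only mild point to be careful about is which model of $\Sigma\PSC(M)$ is in play — the one fixed in the introduction, $\PSC(M)\times[-1,1]\cup\Met(M)\times\{\pm1\}$ — and that the equatorial inclusion is null-homotopic there; since $\Met(M)$ is contractible this model is homotopy equivalent to the ordinary unreduced suspension, for which the claim is classical, and in any case the explicit homotopy above makes the argument self-contained.
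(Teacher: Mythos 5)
Your proposal is correct and takes essentially the same approach as the paper: you factor the inclusion $g\mapsto(g,0)$ through the upper cone inside $\Sigma\PSC(M)$ and then apply $\Phi$, exactly as the paper does. The added explicit null-homotopy $(g,s)\mapsto\Phi([g,s])$ and the basepoint check are just spelled-out versions of the same cone-contraction argument.
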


This shows that we cannot find non-trivial elements of homotopy groups in the space initial data with strict dominant energy condition by simply considering the space of positive scalar curvature metrics as subspace. However, the map $\Phi$ defined above allows for a better construction: In the remaining section, we will show that under certain conditions the composition
\begin{align*}
\pi_k(\PSC(M),g_0) \overset{\mathrm{Susp}}{\lto} \pi_{k+1}(\Susp \PSC(M), [g_0,0]) \overset{\Phi_*}{\lto} \pi_{k+1}(\DEC(M),(g_0,0))
\end{align*}
has non-trivial image.

\subsection{Main theorem}
Let $M$ be a compact spin manifold of dimension $n \geq 2$. The aim of this section is to relate the $\overline{\alpha}$-difference for initial values $\oladiff \colon \pi_k(\DEC(M), (g_0,0)) \to KO^{-n-k}(\{*\})$, where $g_0$ is a metric of positive scalar curvature, to the classical $\alpha$-difference using the map from \cref{Prop:map}. This will lead to a non-triviality result for $\pi_k(\DEC(M),(g_0,0))$. Moreover, the same argument shows that the $\overline{\alpha}$-difference detects that $\DEC(M)$ has least two connected components if $\alpha(M) \neq 0$.

\begin{Satz}[Main Theorem] \label{MainThm}
\begin{enumerate}
	\item 	If $M$ carries a metric $g_0$ of positive scalar curvature, then for all $k \geq 0$, the diagram
	\begin{equation*}
	\begin{tikzcd}
	\pi_k(\PSC(M), g_0)  \ar[dr, "\adiff"'] \rar{\mathrm{Susp}} & \pi_{k+1}(\Susp\PSC(M), [g_0,0]) \rar{\Phi_*}&
	\pi_{k+1}(\DEC(M), (g_0,0)) \ar{dl}{\oladiff} \\ 
	&  KO^{-n-k-1}(\{*\}) &
	\end{tikzcd}
	\end{equation*}
	commutes. Here, $\mathrm{Susp}$ is the suspension homomorphism and $\Phi$ is the map from \cref{Prop:map}.
	\item	For any metric $g_0$,
	\begin{align*}
	\oladiff\left(\left(g_0,-\frac{1}{n}\tau(g_0)g_0 \right), \left(g_0,\frac{1}{n}\tau(g_0)g_0 \right)\right) = \alpha(M) \in KO^{-n}(\{*\}),
	\end{align*}
	where $\tau$ is defined as in \cref{Lem:map}.
\end{enumerate}
\end{Satz}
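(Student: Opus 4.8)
The plan is to reduce both parts of the theorem to the chain of identifications sketched in the introduction, namely: for an initial value pair of the form $(g, \frac{\tau}{n} t g)$ (with $g$ of positive scalar curvature, or at least making $D$ invertible), the $Cl_{n,1}$-linear Dirac-Witten operator is $\overline{D} = D - \frac{\tau}{n} t \cdot \frac{n}{1} \cdot \ldots$, more precisely $\overline{D} = D - \frac12 (\tr K) L(e_0) = D - \frac{\tau}{2} t L(e_0)$ by \cref{Lem:DWvsD}, where here $D$ is the $Cl_{n,1}$-linear Dirac operator on $\hyp M$, which by \cref{Prop:MorH} is Morita-equivalent data to the $Cl_n$-linear Dirac operator on $\Sigma_{Cl}M$. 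I would first carefully unwind the two compositions defining $\alpha$ and $\overline{\alpha}$ (both go: homotopy group $\to$ relative homotopy group $\to$ homotopy class of maps of pairs $\to$ $KO$ via the index map), and observe that, after restricting $\Phi$ to the relevant relative cells, the Fredholm family produced from the $\overline{\alpha}$ side is literally $(g,t) \mapsto F_{D_g} - \frac{\tau(g)}{2} t\, L(e_0)$ on $\overline{H}$, composed with the trivializing unitaries $\overline{\Phi}_g = \Phi_g \otimes_{Cl_n} \mathrm{id}$.

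\medskip

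The core of the argument is then a diagram chase at the level of the index map. Using \cref{Prop:MorH}, $(\overline{H}, \alpha, \tilde{R})$ corresponds to $(H, \alpha, R)$ under the first Morita equivalence, and under this correspondence the bounded transform of the $Cl_{n,1}$-linear Dirac operator $D$ on $\hyp M$ (which is $Cl_{n+1,1}$-linear by the lemma following the definition of $\overline{D}$) corresponds to the bounded transform of the $Cl_n$-linear Dirac operator on $\Sigma_{Cl}M$ — this is the input to the classical $\alpha$-index. On the other hand, the perturbation $-\frac{\tau(g)}{2} t L(e_0) = -\frac{\tau(g)}{2} t\, \tilde{R}(e_{n+1})\alpha$ is exactly a Bott-map-type term (\cref{Thm:KOviaCl2}) with respect to the extra generator $e_{n+1}$ of $\R^{n+1,1}$ relative to $\R^{n,1}$. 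So the family $t \mapsto F - \frac{\tau(g)}{2} t L(e_0)$ realizes, up to reparametrising $t \in [-1,1]$ by the positive scalar $\frac{\tau(g)}{2}$ (a homotopy fixing endpoints, harmless) and up to checking orientation conventions, the Bott map applied to $F$. Invariance of the index map under Morita equivalence (\cref{Thm:KOviaCl}, \cref{Prop:MorMap}) together with compatibility with the Bott map (equation \eqref{eq:indbott}) then identifies $\overline{\alpha}$ of the $\Phi$-image of a class with $\alpha$ of that class — this handles part (1), and part (2) is the same computation specialized to $k=0$, $S^0 = \del I$, where the suspension $\Sigma$ becomes the identification $\pi_0(\PSC) \to \pi_1(\Met, \PSC)$ collapsing to the $\alpha$-index of $M$.

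\medskip

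The main obstacle I anticipate is bookkeeping of signs and the order of operations: as the introduction explicitly warns, in the definition of the index map the Morita equivalence \eqref{eq:indmor} and the Bott map \eqref{eq:indbott} are applied in one order, whereas here we first do Bott (producing the $t$-family on $\hyp M$ with its $Cl_{n,1}$-structure) and then Morita (identifying with $\Sigma_{Cl}M$), so an extra sign can appear. I would resolve this by working out explicitly, on the level of the concrete $2\times 2$ block models from \cref{Prop:MorMap} and \cref{Thm:KOviaCl2}, how $\tilde{R}(e_{n+1})\alpha$ sits inside the extended Clifford multiplication, comparing with the $\tilde c(e)\tilde\iota$ of the Bott map and the $\ker(\tilde c(\epsilon)\tilde c(e) - \mathrm{id})$ of the Morita restriction; the sign of $\tilde{R}(e_{n+1}) = L(e_0)\alpha$ chosen in \cref{Prop:Ext} should make everything consistent, but this must be checked. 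A secondary point requiring care is that $g_0$ being of positive scalar curvature guarantees $F$ is invertible (\cref{Kor:WellDef}), so the whole $t$-family stays in $G^{n,1}$ at the basepoint and on $S^k$, which is what lets the relative-homotopy-class formalism apply; for part (2) one uses instead that at $t = \pm 1$ the pair satisfies the strict dominant energy condition by \cref{Prop:map1}, and at the (single) basepoint $\frac{\tau(g_0)}{n} g_0$ as well, so the class in $\pi_0(\DEC(M))$ is well-defined and its image is computed by the same Bott-Morita identification evaluated at the endpoints.
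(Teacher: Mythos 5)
Your high-level strategy is the same as the paper's: use \cref{Lem:DWvsD} to write $\overline{D} = D - \frac{\tau}{2}\,t\,L(e_0)$, interpret $L(e_0) = \tilde{R}(e_{n+1})\alpha$ as a Bott-map term, reduce $D$ on $\hyp M$ to the $Cl_n$-linear Dirac operator on $\Sigma_{Cl}M$ by the first Morita equivalence (\cref{Prop:MorH}), and use the compatibility of the index map with Bott and Morita (\eqref{eq:indbott}, \eqref{eq:indmor}). You also rightly flag the sign/order-of-operations issue (the paper handles it via a $Cl_{n,1}$-Hilbert space isomorphism $U$ conjugating $\tilde R(-e_{n+1})$ to $\tilde R(e_n)$, and the choice $e = -e_{n+1}$ in the Bott map). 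Part (2) being the $k=-1$ case is also the paper's observation.

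However, there is a genuine gap at what you call "the core of the argument." You claim that, after unwinding definitions, the Fredholm family coming from the $\overline\alpha$ side is ``literally'' $(g,t) \mapsto F_{D_g} - \frac{\tau(g)}{2}\,t\,L(e_0)$. That is false: the map defining $\overline\alpha$ sends $(g,K)$ to the bounded transform $\overline D_{(g,K)}/\sqrt{1+\overline D_{(g,K)}^2}$ of the \emph{whole} operator, and the bounded transform of $D_g - \frac{\tau}{2}t L(e_0)$ is not the bounded transform of $D_g$ minus $\frac{\tau}{2}t L(e_0)$. The Bott map, by contrast, really does produce $F + t\,\tilde R(e_{n+1})\alpha$ with $F$ the bounded transform of $D_g$. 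Relating these two families is exactly where the paper does its real work: it constructs an explicit homotopy
\[
(x,t,s) \mapsto a_{(x,t,s)}(D_{g(x)})\,D_{g(x)} - b_{(x,t,s)}(D_{g(x)})\,t\,L(e_0)
\]
between the two, and uses the spectral decomposition of $D_g$ together with the anti-commutation of $L(e_0)$ with $D_g$ (\cref{Prop:EigVal}) to verify at each $s$ that the operators are Fredholm, lie in the correct component when $n-1 \equiv -1 \bmod 4$, and are invertible on $\partial(D^{k+1}\times I)$. Your ``up to reparametrising $t$ by the positive scalar $\frac{\tau(g)}{2}$'' does not address this: the discrepancy between the two families is not a reparametrization of $t$ but a genuinely different way of normalizing a two-parameter family of unbounded operators, and the invertibility of the endpoints has to be re-verified along the interpolation rather than inherited automatically. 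Until you supply that homotopy (or an equivalent argument), the proof is incomplete.
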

\begin{proof}
	For the first part, we start by exploring the effect of the upper composition. The claim is that
	\begin{equation}
	\begin{tikzcd}[column sep=-0.75cm] \label{eq:diag0}
	\pi_k(\PSC(M),g_0) \rar{\mathrm{Susp}}& \pi_{k+1}(\Susp\PSC(M), [g_0,0])  \rar{\Phi_*} &\pi_{k+1}(\DEC(M),(g_0,0)) \\
	\pi_{k+1}(\Met(M),\PSC(M),g_0)\dar \uar{\cong} \ar[rr, "\phi_*"] & & \pi_{k+2}(\Ini(M),\DEC(M),(g_0,0)) \dar \uar{\cong} \\
	{[(D^{k+1},S^k),(\Met(M),\PSC(M))]}\ar[rr, "\phi_*"] & &{[(D^{k+1},S^k)\times(I,\del I),(\Ini(M),\DEC(M))]}
	\end{tikzcd}
	\end{equation}
	commutes, where the middle and the lower map are both induced by
	\begin{align*}
	\phi \colon (\Met(M),\PSC(M))\times (I,\del I) &\lto (\Ini(M),\DEC(M)) \\
	(g,t) &\lmapsto \left(g, \frac{\tau(g)}{n}tg \right).
	\end{align*}
	Note that $\phi$ preserves the base point, if the base point of $(D^{k+1},S^k)\times(I,\del I)$ is chosen to be $(*,0)$ when $*$ is the base point of $S^k$, so the middle map is well-defined. The lower square obviously commutes. For the upper square, we start with a class $[g] \in \pi_k(\PSC(M),g_0)$. Then the preimage under the boundary isomorphism is represented by
	\begin{align*}
	\tilde{g} \colon (D^{k+1}, S^k,*) &\lto (\Met(M),\PSC(M),g_0) \\
	rx &\lmapsto (1-r)g_0+rg(x)
	\end{align*}
	for $r \in [0,1]$ and $x \in S^k$. Applying the horizontal map and restricting to the boundary yields the class of
	\begin{align*}
	(\del(D^{k+1} \times I),(*,0)) &\lto (\DEC(M),(g_0,0)) \\
	(x,t) &\lmapsto \left(\tilde{g}(x), -\frac{\tau(\tilde{g}(x))}{n}t\tilde{g}(x) \right).
	\end{align*}
	Using the homeomorphism
	\begin{align*}
	(\Susp(S^k),[*,0]) &\cong (\del(D^{k+1} \times I),(*,0))\\
	[x,t]&\mapsto \begin{cases}
	(2(1+t)x,-1) &  t \in [-1,-\Frac12]\\
	(x,2t) & t \in [-\Frac12 , \Frac12] \\
	(2(1-t)x,1) & t \in [\Frac12, 1],
	\end{cases} 
	\end{align*}
	this precisely gives the formula for $\Phi \circ \Susp g$ (cf.\ \cref{Prop:map}).
	
	The core of the proof is showing that the following diagram commutes:
	\begin{equation} \label{eq:diag1}
	\begin{tikzcd}[column sep=-3.2cm]
	{[(D^{k+1},S^k),(\Met(M),\PSC(M))]} \dar \ar[rr, "\phi_*"] & &{[(D^{k+1},S^k)\!\times\!(I,\del I),(\Ini(M),\DEC(M))]} \dar \\[0.9em]
	{[(D^{k+1},S^k),(\Fred^{n,0}(H),G^{n,0}(H))]} \drar{\cong} &  &  {[(D^{k+1},S^{k})\!\times\!(I,\del I),(\Fred^{n,1}(\overline H),G^{n,1}(\overline H))]} \\
	& {[(D^{k+1},S^k),(\Fred^{n+1,1}(\overline H),G^{n+1,1}(\overline H))].} \urar{\cong} & 
	\end{tikzcd}
	\end{equation}
	Here, the first lower map is associated to the Morita equivalence between $Cl_{n,0}$- and $Cl_{n+1,1}$-Hilbert spaces, that is the first map in \cref{Prop:MorMap}. This uses that $H$ and $\overline{H}$ correspond to each other under this Morita equivalence according to \cref{Prop:MorH}. The second lower map is the Bott map (cf.~\cref{Thm:KOviaCl2}), associated to $e=-e_{n+1}$.
	
	Before doing so, let us show that
	\begin{equation} \label{eq:diag2}
	\begin{tikzcd}[column sep=-3.2cm, row sep=+1cm]
	& &[-1.2cm]  {[(D^{k+1},S^{k})\!\times\!(I,\del I),(\Fred^{n,1}(\overline H),G^{n,1}(\overline H))]}\ar[dd, shift right=-6ex, "\cong"]\\
	& {[(D^{k+1},S^k),(\Fred^{n+1,1}(\overline H),G^{n+1,1}(\overline H))]} \urar{\cong} \drar{\cong} &  \\
	{[(D^{k+1},S^k),(\Fred^{n,0}(H),G^{n,0}(H))]} \urar{\cong} \ar[dd, shift left=-3ex,"\cong"' near start, "\ind"'] \drar{\cong}&  & {[(D^{k+1},S^{k})\!\times\!(I,\del I),(\Fred^{n,1}(\overline H),G^{n,1}(\overline H))]}\ar[dd, shift right=-6ex, "\cong"  near start, "\ind"] \\
	& {[(D^{k+1},S^k)\!\times\!(I,\del I),(\Fred^{n-1,0}(H),G^{n-1,0}(H))]} \urar{\cong} \ar[rd,"\cong"', "\ind"] &  \\
	KO^{-n}(D^{k+1},S^k) \drar[swap]{\cong} \ar[rr,"\cong"]& & KO^{-n+1}((D^{k+1},S^{k})\!\times\!(I,\del I)) \dlar{\cong} \\
	& KO^{-n-k-1}(\{*\}) &
	\end{tikzcd}
	\end{equation}
	commutes. Here the central diamond is formed by the Bott maps associated to $e=e_n$ as well as maps induced by Morita equivalences. The topmost right hand map is induced by a $Cl_{n,1}$-Hilbert space isomorphism to be defined later. Notice that the right hand vertical composition is the index map, which follows from the invariance of the index map under $Cl_{n,1}$-Hilbert space isomorphisms. So stitching the diagrams \eqref{eq:diag0}-\eqref{eq:diag2} together, we obtain the diagram from the first claim.
	
	Moreover, setting $k = -1$, the commutative diagram composed of \eqref{eq:diag1} and \eqref{eq:diag2} implies the second assertion. Then $(D^{k+1}, S^k) = (\{*\},\emptyset)$ and the upper left corner of the diagram is the one-point set $[\{*\},\Met(M)]$. Now the left hand vertical composition maps this point to the $\alpha$-index of $M$, whereas the composition through the upper right corner is seen to map it to the $\overline{\alpha}$-difference of the $\pi_0$-class from the claim.
	
	The lower half of \eqref{eq:diag2} commutes by the definition of the index map, cf.\ \eqref{eq:indbott} and \eqref{eq:indmor}. The middle diamond commutes as well, this is obvious from the way its constituting maps are defined. We are left with the upper triangle. Note first that we are dealing with two different $Cl_{n,1}$-Hilbert space structures on $\overline{H}$: Since the map from the center upwards is the Bott map for $e=-e_{n+1}$, the $Cl_{n,1}$-structure is the one obtained by forgetting the $\tilde{R}(e_{n+1})$-action, whereas in the lower Hilbert space, we forget the multiplication by $e_n$. These are connected by the $Cl_{n,1}$-Hilbert space isomorphism
	\begin{align*}
	U \colon \overline{H} &\lto \overline{H} \\
	\phi &\mapsto \frac{1}{\sqrt{2}} \tilde{R}(e_{n+1})\tilde{R}(e_n+e_{n+1}).
	\end{align*}
	Indeed, $a \in B(\overline H)$ corresponds via $U$ to $a=U a U^{-1}$, $\tilde{R}(e_i)$ to $\tilde{R}(e_i)$ for $i<n$ and $\tilde{R}(e_n)$ to $\tilde{R}(e_{n+1})$. The right hand map in the triangle is defined to be the map induced by $\Fred^{n,1}(\overline H) \ni F \mapsto UFU^{-1}$. As the analogous map on $\Fred^{n+1,1}(\overline H)$ is the identity, the diagram relating the Bott maps gets the shape of a triangle rather than a square. Its commutativity follows from
	\begin{align*}
	U\tilde{R}(-e_{n+1})U^{-1} &= \frac12 \tilde{R}(e_{n+1})\tilde{R}(e_n+e_{n+1})\tilde{R}(-e_{n+1})\tilde{R}(e_n+e_{n+1})\tilde{R}(e_{n+1}) \\
	&=\frac12 (\tilde{R}(e_{n+1})+\tilde{R}(e_n)+\tilde{R}(e_n)-\tilde{R}(e_{n+1})) =\tilde{R}(e_n).
	\end{align*}
	
	It only remains prove that \eqref{eq:diag1} commutes. The first two maps of the lower composition map $[g] \in [(D^{k+1},S^k),(\Met(M),\PSC(M))]$ to the class of
	\begin{align*}
	(D^{k+1},S^k)&\lto (\Fred^{n+1,1}(\overline H),G^{n+1,1}(\overline H)) \\
	x &\lmapsto  \Phi^{-1}_{g(x)} \frac{D_{g(x)}}{\sqrt{1+D_{g(x)}^2}} \Phi_{g(x)}.
	\end{align*}
	This is because it restricts to the correct map on $H=\ker(\tilde{R}(e_0)\tilde{R}(e_{n+1})-\id) \subseteq \overline{H}$, i.e.\ the $Cl_n$-Hilbert space associated to $\overline{H}$ via the Morita equivalence \eqref{eq:mor1}. The remaining map sends it to the class of
	\begin{align*}
	(D^{k+1},S^k)\times(I,\del I) &\lto (\Fred^{n,1}(\overline H),G^{n,1}(\overline H)) \\
	(x,t) &\lmapsto \Phi^{-1}_{g(x)} \frac{D_{g(x)}}{\sqrt{1+D_{g(x)}^2}} \Phi_{g(x)}-t\tilde{R}(e_{n+1}) a  \\
	&\phantom{\lmapsto} = \Phi^{-1}_{g(x)} \left(\frac{D_{g(x)}}{\sqrt{1+D_{g(x)}^2}}-tL(e_0) \right)\Phi_{g(x)}.
	\end{align*}
	In contrast, the result of the upper composition is represented by
	\begin{align*}
	(D^{k+1},S^k)\times(I,\del I) &\lto (\Fred^{n,1}(\overline H),G^{n,1}(\overline H)) \\
	(x,t) &\lmapsto \Phi^{-1}_{g(x)} \frac{\overline{D}_{(g(x),k(x,t))}}{\sqrt{1+\overline{D}_{(g(x),k(x,t))}^2}} \Phi_{g(x)} 
	\end{align*}\enlargethispage{2\baselineskip}
	with $k(x,t)=\frac{\tau(g(x))}{n}tg(x)$.
	
	Remembering that $\overline{D}_{(g,k)}=D_g-\frac12 \tau L(e_0)$, these do not look too much different, and we show that the following is a well-defined homotopy between them:
	\begin{align*}
	(D^{k+1},S^k)\!\times\!(I,\del I)\!\times\![0,1] &\to (\Fred^{n,1}(\overline H),G^{n,1}(\overline H)) \\
	(x,t,s) &\mapsto \Phi^{-1}_{g(x)} \left(a_{(x,t,s)}(D_{g(x)}) D_{g(x)}
	- b_{(x,t,s)}(D_{g(x)})tL(e_0) \right)\Phi_{g(x)}
	\end{align*}
	for
	\begin{align*}
	a_{(x,t,s)}(\lambda)&=\frac{s}{\sqrt{1+\lambda^2}}+\frac{1-s}{\sqrt{1+\lambda^2+\frac14 t^2\tau(g(x))}} \\
	b_{(x,t,s)}(\lambda)&=s+\frac{(1-s)\frac12 \tau(g(x))}{\sqrt{1+\lambda^2+\frac14 t^2\tau(g(x))}}.	
	\end{align*}
	As this operator family is obtained by linearly interpolating between two continuous operator families, it is again continuous. So it remains to see that its target is indeed $(\Fred^{n,1}(\overline H),G^{n,1}(\overline H))$. 
	It is clear, that all the operators are bounded, self-adjoint, odd and $Cl_{n,1}$-linear. To show that the operator $F_{(x,t,s)}$ associated to $(x,t,s)$ is Fredholm, we use the spectral decomposition of $D_{g(x)}$ from \cref{Prop:EigVal}: The restriction of $F_{(x,t,s)}$ to $E_k \oplus a(E_k) \cong E_k \otimes \R^2$ is given by
	\begin{align*}
	\id_{E_k} \otimes \begin{pmatrix} a_{(x,t,s)}(\lambda_k)\lambda_k & -b_{(x,t,s)}(\lambda_k)t \\ -b_{(x,t,s)}(\lambda_k)t & -a_{(x,t,s)}(\lambda_k)\lambda_k \end{pmatrix}.
	\end{align*}
	This is diagonalizable with eigenvalues $\pm \sqrt{a_{(x,t,s)}(\lambda_k)^2\lambda_k^2+b_{(x,t,s)}(\lambda_k)^2t^2}$. Note that due to $\sqrt{a_{(x,t,s)}(\lambda_k)^2\lambda_k^2+b_{(x,t,s)}(\lambda_k)^2t^2} \geq a_{(x,t,s)}(\lambda_k) |\lambda_k|$, their absolute values, for any $t \in I$ and $s \in [0,1]$, are bounded away from zero by
	\begin{align*}
	\frac{\lambda_0}{\sqrt{1+\lambda_0^2+\frac14 \tau(g(x))}} >0,
	\end{align*}
	where $\lambda_0>0$ denotes the smallest positive eigenvalue of $D_{g(x)}$. A similar consideration as in \cref{Prop:EigVal} shows that $F_{(x,t,s)}$ restricted to $\ker(D_{g(x)})$  is diagonalizable as well, with eigenvalues $\pm b_{(x,t,s)}(0)t$. Putting this together, we find that $F_{(x,t,s)}$  has finite dimensional kernel, co-kernel and closed image (for this, the boundedness away from zero is needed). Furthermore, $F_{(x,t,s)}$ is invertible if $D_{g(x)}$ is invertible or $t >0$, one of which is the case on $\del (D^{k+1}\times I)$.
	
	In the case $n-1 \equiv -1 \mod 4$ one more tiny bit of thought is necessary. The space self-adjoint $Cl_{n,1}$-linear Fredholm operators has three components (cf.\ \cite{AS}): Those $F$ for which $\omega_{n,1}F\iota$ is essentially positive, those for which it is essentially negative and the rest. As for $s=0$ (or $s=1$) all operators $F_{(x,t,s)}$ fall into the last category, the same has to be true for all $s \in [0,1]$ by continuity. 
\end{proof}

\subsection{Corollaries and examples}
In this final section, we explore some of the consequences of the main theorem (\cref{MainThm}).
We start by combining the first part of the main theorem with the non-triviality results for the $\alpha$-difference from \cref{Thm:CSS,Thm:BER}.
This gives the following conclusions:
\begin{Kor}
	If $M$ is a closed spin manifold of dimension $n \geq 6$ that carries a metric $g_0$ of positive scalar curvature, then for all $k \geq 1$ with $k+n \equiv 1,2 \mod 8$ the $\overline{\alpha}$-difference for initial values $\oladiff \colon \pi_k(\DEC(M),(g_0,0)) \to  KO^{-n-k}(\{*\}) \cong \Z/2\Z$ is split surjective.
\end{Kor}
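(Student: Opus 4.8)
The plan is to deduce this directly from \cref{MainThm}(1) combined with the split surjectivity of the classical $\alpha$-invariant established by Crowley--Schick--Steimle (\cref{Thm:CSS}); essentially all that is needed beyond invoking these two results is a reindexing of degrees. So first I would set, for $k \geq 1$ with $k + n \equiv 1,2 \mod 8$, the shifted index $j \coloneqq k - 1 \geq 0$. Then $j + n + 1 = k + n \equiv 1,2 \mod 8$ and $KO^{-n-j-1}(\{*\}) = KO^{-n-k}(\{*\})$, so \cref{Thm:CSS} (which applies since $\dim M = n \geq 6$ and $M$ is spin with $g_0 \in \PSC(M)$) tells us that $\alpha \colon \pi_j(\PSC(M), g_0) \to KO^{-n-k}(\{*\}) \cong \Z/2\Z$ is split surjective; fix a section $s$ of it. One could equally feed in \cref{Thm:BER} at this step, obtaining plain surjectivity instead.

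Next I would apply \cref{MainThm}(1) with its running index $k$ replaced by $j$. Since $\pi_{j+1}(\DEC(M), (g_0,0)) = \pi_k(\DEC(M), (g_0,0))$, the commuting triangle there reads
\[
\overline\alpha \circ \Phi_* \circ \Sigma \;=\; \alpha \colon \pi_j(\PSC(M), g_0) \lto KO^{-n-k}(\{*\}),
\]
with $\Sigma$ the suspension map and $\Phi_*$ the map induced by the comparison map $\Phi$ of \cref{Prop:map}. For $k \geq 2$ the composite $\Phi_* \circ \Sigma$ is a group homomorphism, so $\Phi_* \circ \Sigma \circ s$ is a homomorphic section of $\overline\alpha \colon \pi_k(\DEC(M), (g_0,0)) \to KO^{-n-k}(\{*\})$, giving split surjectivity. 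For $k = 1$ one argues at the level of pointed sets: the section $s$ picks a positive scalar curvature metric detecting the generator of $KO^{-n-1}(\{*\})$, and $\Phi_* \circ \Sigma$ carries it to an element of $\pi_1(\DEC(M), (g_0,0))$ mapped by $\overline\alpha$ to that generator.

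I do not anticipate any genuine difficulty: the substantive content is already contained in the proof of \cref{MainThm} and in \cite{CSS}. The only points that require attention are the degree shift in the reindexing $k \mapsto k-1$ (so that the $KO$-groups and the congruence conditions line up correctly) and the observation that a section of $\alpha$ transports along $\Phi_* \circ \Sigma$ to a section of $\overline\alpha$, which relies on $\Phi_* \circ \Sigma$ being a homomorphism in the range $k \geq 2$.
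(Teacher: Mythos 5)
Your overall strategy is what the paper intends: reindex with $j = k-1$, apply \cref{Thm:CSS} to obtain a splitting of $\alpha$ on $\pi_j(\PSC(M),g_0)$, and push it forward along $\Phi_*\circ\Sigma$ using the commutative triangle from \cref{MainThm}(1). The degree bookkeeping (the congruence $j+n+1 = k+n\equiv 1,2\bmod 8$ and the identification $KO^{-n-j-1}(\{*\})=KO^{-n-k}(\{*\})$) is carried out correctly, and for $k\ge 2$, equivalently $j\ge 1$, the argument is complete: $\Sigma$, $\Phi_*$ and the CSS section $s$ are all group homomorphisms, so $\Phi_*\circ\Sigma\circ s$ is a homomorphic section of $\overline\alpha$.

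The case $k=1$, however, is not actually closed by your argument. There $j=0$ and $\pi_0(\PSC(M),g_0)$ is only a pointed set, so $\Sigma\colon\pi_0(\PSC(M))\to\pi_1(\Sigma\PSC(M))$ is not a homomorphism, and \enquote{split surjective} in \cref{Thm:CSS} carries no group-theoretic content at $\pi_0$; it merely furnishes a class $[g_1]$ with $\alpha([g_1])=1$. Pushing this forward shows that $\overline\alpha\colon\pi_1(\DEC(M),(g_0,0))\to\Z/2\Z$ is \emph{surjective}, but split surjectivity of a homomorphism onto $\Z/2\Z$ requires an element of order dividing $2$ mapping to the generator, and nothing in your argument controls the order of $\Phi_*\Sigma([g_1])$. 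Indeed $\Sigma([g_1])$ has infinite order, since $\pi_1(\Sigma\PSC(M))$ is a free group on $\pi_0(\PSC(M))\smallsetminus\{[g_0]\}$, and there is no reason $\Phi_*$ should annihilate its square. So either one needs an additional input to produce a $2$-torsion preimage at $k=1$, or the conclusion there should be weakened to plain surjectivity (which is what the companion corollary via \cref{Thm:BER} already records). The paper states the corollary without spelling out the proof and does not flag this distinction either, so you are in good company, but as written your proof establishes the split only for $k\ge 2$.
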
 
\begin{Kor}
	If $M$ is a closed spin manifold of dimension $n \geq 6$ that carries a metric $g_0$ of positive scalar curvature, then for all $k \geq 1$ the $\overline{\alpha}$-difference for initial values $\oladiff \colon \pi_k(\DEC(M),(g_0,0)) \to  KO^{-n-k}(\{*\})$ is non-trivial whenever the target is non-zero, that is when $k+n \equiv 0,1,2,4 \mod 8$.
\end{Kor}
In particular, under the assumptions of the corollaries above, $\pi_k(\DEC(M),(g_0,0)) \neq 0$, which shows the first part of \cref{Kor:Main}. Note that the main theorem provides an explicit construction of the non-trivial elements, provided that in $\pi_{k-1}(\PSC(M),g_0)$ the non-trivial elements detected by the $\alpha$-difference are known.

Particularly much is known about connected components of the space of positive scalar curvature metrics.
If there are several components of $\PSC(M)$ that can be distinguished by their $\alpha$-index difference, the main theorem provides us with non-trivial loops in $\DEC(M)$.

\begin{Bsp}
	As explained in \cite[Ex.~IV.7.5]{LM}, there is a sequence of positive scalar curvature metrics $g_k \in \PSC(S^7)$, $k \in \Z$, on the (standard) $7$-sphere with the following property:
	If $V_k \to S^4$ is the real vector bundle with Euler number $\chi = 1$ and Pontrjagin number $p_1 = 4 + 896k$, then, after identifying its sphere bundle $\del D(V_k)$ with $S^7$, the metric $g_k$ extends to a positive scalar curvature metric $\hat{g}_k$ on the disk bundle $D(V_k)$ collared along the boundary.
	All these metrics $g_k$ lie in different path components of $\PSC(S^7)$.
	More precisely, $\adiff(g_k, g_l) = l - k$.
	This can be seen as follows:
	According to the main result of \cite{Eb}, $\adiff(g_k, g_l)$ is equal to the index of the $Cl_8$-linear Dirac operator on $S^7 \times \R$ equipped with a metric of the form $\hat{h} = h_t + \upd t^2$, where $h_t = g_k$ for $t \leq -1$ and $h_t = g_l$ for $t \geq 1$.
	Under complexification and Bott periodicity $KO^{-8}(\{*\}) \cong K^{-8}(\{*\}) \cong K^0(\{*\}) \cong \Z$, this corresponds to the index of the classical Dirac operator on $(S^7 \times \R, \hat{h})$.
	We compute this using the cut-and-paste version version of relative index theorem (cf.~\cite[Thm.~1.2]{Bu}).
	We take the double of $(D(V_k), \hat{g}_k)$ and cut it along the former boundary $\del D(V_k)$.
	We also cut $(S^7 \times \R, \hat{h})$ along $S^7 \times \{-1\}$.
	Then, using the identification $S^7 \cong \del D(V_k)$, we glue them together in the other way that respects the boundary orientations.
	For the indices of the associated Dirac operators, we obtain:
	\begin{align*}
			\mathrm{index}(S^7 \times \R, \hat{h}) = 
				&-\mathrm{index}(D(V_k) \cup (-D(V_k)), \hat{g}_k \cup \hat{g}_k) \\
				&+ \mathrm{index}((S^7 \times (-\infty, -1])  \cup (-D(V_k)), \hat{h} \cup \hat{g}_k) \\
				&+ \mathrm{index}(D(V_k) \cup (S^7 \times [-1, \infty)), \hat{g}_k \cup \hat{h}).
	\end{align*}
	Here, the two first indices vanish since the metric has positive scalar curvature.
	Proceeding similarly at $S^7 \times \{1\}$, we get
	\begin{align*}
			\mathrm{index}(S^7 \times \R, \hat{h}) &= \mathrm{index}(D(V_k) \cup (S^7 \times [-1, \infty)), \hat{g}_k \cup \hat{h}) \\
					&= \mathrm{index}(D(V_k) \cup (S^7 \times [-1, 1]) \cup (-D(V_l)), \hat{g}_k \cup \hat{h} \cup \hat{g}_l).
	\end{align*}
	The latter is of course equal to $\hat{A}(D(V_k) \cup (-D(V_l)))$.
	Using cut-and-paste once more, the claimed equality with $k-l$ reduces to the statement $\hat{A}(D(V_k) \cup D^8) = k$ from \cite[Ex.~IV.7.5]{LM}.
	
	Now, the suspension construction from \cref{sec:Susp} produces an element in $\pi_1(\DEC(S^7))$ out of the $\pi_0$-class defined by $g_k$ and $g_l$.
	If $k \neq l$, the main theorem shows that its $\overline{\alpha}$-difference is $k - l \neq 0$, hence it is non-trivial.
	Tracking through the definitions, it is represented by a loop that is concatenated from the following four segments:
	In the first segment the initial value pairs are all of the form $(g, \frac{1}{n} \tau(g)g)$ and the metric $g$ interpolates between $g_l$ and $g_k$.
	The second segment is a linear interpolation between $(g_k, \frac{1}{n} \tau(g_k)g_k)$ and $(g_k, -\frac{1}{n} \tau(g_k)g_k)$.
	In particular, the first component of the initial value pair is fixed throughout the second segment.
	The third piece consists of initial value pairs $(g, -\frac{1}{n} \tau(g)g)$, where $g$ runs from $g_k$ to $g_l$.
	The final segment is again an interpolation within the second component only, running from $(g_l, -\frac{1}{n} \tau(g_l)g_l)$ to $(g_l, \frac{1}{n} \tau(g_l)g_l)$.
	We have thus found a rather explicit infinite family of non-trivial elements in $\pi_1(\DEC(S^7))$.
\end{Bsp}

Concerning path components of $\DEC(M)$, we can say the following.
It is easy to see that all pairs $(g, \frac{1}{n}\tau(g)g)$, $g \in \Met(M)$, lie in the same path component of $\DEC(M)$.
The same is true for all pairs of the form $(g, -\frac{1}{n}\tau(g)g)$.
If $M$ carries a positive scalar curvature metric, then the components of $(g, \frac{1}{n}\tau(g)g)$ and $(g, -\frac{1}{n}\tau(g)g)$ are actually the same.
If on the other hand $\alpha(M) \neq 0$ (and hence $M$ does not admit positive scalar curvature), the second part of the main theorem shows that these belong to different path component as their $\overline{\alpha}$-difference is non-zero.
This immediately implies the second part of \cref{Kor:Main}.
It is the purpose of the follow-up work \cite{Gl} to show that we can still distinguish these two path components if $M$ does not carry a positive scalar curvature metric due to the (also index-theoretic) enlargeability obstruction.
In special cases, we may be able to distinguish more components.

\begin{Bsp}
	Consider the connected sum $M = K3 \# K3$, which we decompose into $(K3 \setminus D^4) \cup  
	(S^3 \times [-L,L]) \cup (K3 \setminus D^4)$, $L > 0$.
	Choose a metric $g$ on $M$ that is symmetric under the involution $\sigma$ switching the two $K3$-surfaces and reflecting the $[-L,L]$-component of the connecting neck.
	We assume moreover that $g$ is the standard product metric on the neck $S^3 \times [-L,L]$.
	Observe that we can make the neck longer, i.~e.\ $L$ larger, without changing $\tau(g)$.
	Since $\alpha(M) = 2\alpha(K3) \neq 0$, we already know that $(g, -\frac{1}{n}\tau(g)g)$ and $(g, \frac{1}{n}\tau(g)g)$ lie in different path components.

	We now consider the following initial value pair $(g,k)$.
	On the left $K3 \setminus D^4$, it is given by $(g, -\frac{1}{n} \tau(g)g)$.
	On the right $K3 \setminus D^4$, it is $(g, \frac{1}{n} \tau(g)g)$.
	Along the neck, we take $(g, \frac{t}{nL} \tau(g)g)$ at $(x,t) \in S^3 \times [-L,L]$.
	By the definition of $\tau$, the so obtained initial value pair satisfies the strict dominant energy condition along the two $K3$-parts.
	Since the metric on $S^3 \times [-L, L]$ has positive scalar curvature, the estimate
	\begin{align*}
		\rho - \|j\| &\geq \scal^g - \frac{n-1}{nL} \tau(g)
	\end{align*}
	shows that this initial value pair also satisfies the strict dominant energy condition in the neck region as long as $L$ is chosen to be large enough.
	Thus we have constructed an element $(g,k) \in \DEC(M)$ and we claim that it is part of neither of two components mentioned before.
	Assume that there were a path $t \mapsto (g_t,k_t)$ in $\DEC(M)$ connecting $(g,k)$ to $(g, \frac{1}{n}\tau(g)g)$, say.
	Then $(\sigma^* g_t, -\sigma^* k_t)$ would be a path in $\DEC(M)$ connecting it also with $(g, -\frac{1}{n}\tau(g)g)$, contradiction.
	
	It might be worth noting that the component of the pair $(g,k)$ constructed above may be detected by the $\overline{\alpha}$-difference.
	Namely, it is not hard to see that it is additive in the sense
	\begin{align*}
		\oladiff\left(\left(g,-\frac{1}{n}\tau(g)g\right), (g,k)\right) &+ \oladiff\left((g,k), \left(g,\frac{1}{n}\tau(g)g\right)\right) \\
		 &= \oladiff\left(\left(g,-\frac{1}{n}\tau(g)g\right), \left(g,\frac{1}{n}\tau(g)g\right)\right) = 2\alpha(K3).
	\end{align*}
	Moreover, replacing the endomorphism $L(e_0)$ by $-L(e_0)$ the Dirac-Witten operators defining $\oladiff((g,-\frac{1}{n}\tau(g)g), (g, -k))$ turn on the nose into the Dirac-Witten operators defining $\oladiff((g,\frac{1}{n}\tau(g)g), (g, k))$.
	Hence,
	\begin{align*}
		\oladiff\left((g,k), \left(g,\frac{1}{n}\tau(g)g\right)\right)
			&= -\oladiff\left(\left(g,\frac{1}{n}\tau(g)g\right), (g,k)\right) \\
			&= \oladiff\left(\left(g,-\frac{1}{n}\tau(g)g\right), (g, -k)\right) \\
			&= \oladiff\left(\left(g,-\frac{1}{n}\tau(g)g\right), (g, k)\right),
	\end{align*}
	where the last step uses the invariance of the $\overline{\alpha}$-difference under the diffeomorphism $\sigma$.
	We obtain
	\begin{align*}
		\oladiff\left((g,k), \left(g,\frac{1}{n}\tau(g)g\right)\right) = \oladiff\left(\left(g,-\frac{1}{n}\tau(g)g\right), (g, k)\right) &= \alpha(K3) \neq 0.
	\end{align*}
	This result can probably also be obtained with the help of a suitable relative index theorem.
\end{Bsp}

\appendix
\section{On the spectral asymptotics}
This appendix is devoted to the fact that the spectrum of a formally self-adjoint, first order elliptic differential operator has both infinitely many positive and infinitely many negative eigenvalues.
This is used in the text when the operator is $\omega_{n,0}D\iota$ for $n \equiv -1 \mod 4$ or $\omega_{n,1}\overline{D}\iota$ for $n \equiv 0 \mod 4$, where $n > 0$ is as always the dimension of the manifold.
Although this statement is probably well-known, it is hard to find a reference in the literature.
The following argument was suggested by the anonymous referee.

\begin{Prop}
Let $M$ be a closed Riemannian manifold of dimension $n \geq 1$ and $E \to M$ be a vector bundle with a metric and a metric connection $\nabla$.
Assume that $D \colon \Gamma(E) \to \Gamma(E)$ is a formally self-adjoint, first order elliptic differential operator.
Then $D$ has infinitely many positive and infinitely many negative eigenvalues.
\begin{proof}	
First of all, after potentially passing to the complexification, we may assume that $E \to M$ is a complex vector bundle.
Note that the assumptions on $D$ together with the compactness of $M$ guarantee that the spectrum of $D$ is discrete and consists of real eigenvalues with finite multiplicity (cf.~\cite[Thm.~III.5.8]{LM}).
We assume for contradiction that the spectrum is bounded below.
Then, replacing $D$ by $D + c$ for some $c \in \R$, we may assume that $D$ is positive.

Now take a covector $\xi \in T^*_pM$ so that $\sigma_D(\xi) \neq 0$, where the principal symbol $\sigma_D$ of $D$ is defined by $\sigma_D(\upd f) = [D, f]$ for any $f \in C^\infty(M)$.
In fact, since $D$ is elliptic, any $\xi \neq 0$ will do the job.
Since endomorphism  $i\sigma_D(\xi)$ is self-adjoint, we may choose an eigenvector $\Psi_p \in E_p$ of non-zero eigenvalue.
Let $f \in C^\infty(M)$ be a function with $\upd_p f = \xi$ and $\Psi \in \Gamma(E)$ be a section extending $\Psi_p$.
Since $\langle \Psi_p, i\sigma_D(\xi) \Psi_p \rangle \neq 0$,  we will have $(\Psi, i\sigma_D(\upd f) \Psi)_{L^2} \neq 0$ -- at least after multiplying $\Psi$ with a cut-off function supported near $p$.

For any $t \in \R$, we have
\begin{align*}
	e^{-itf}D(e^{itf}\Psi) &= D \Psi + e^{itf}[D, e^{itf}] \Psi = D \Psi + e^{-itf}\sigma_D(\upd e^{itf}) \Psi = D \Psi + it\sigma_D(\upd f) \Psi
\end{align*}
and thus
\begin{align*}
	(e^{itf}\Psi, D e^{itf}\Psi)_{L^2} &= (\Psi, e^{-itf} D e^{itf}\Psi)_{L^2} = (\Psi, D \Psi)_{L^2} + t(\Psi, i\sigma_D(\upd f) \Psi)_{L^2}.
\end{align*}
This yields the desired contradiction since positivity of $D$ implies that the left-hand side $(e^{itf}\Psi, D e^{itf}\Psi)_{L^2} \geq 0$ for every $t \in \R$.
\end{proof}
\end{Prop}

\section*{Declarations}
The author declares that there is no conflict of interest. There is no associated data or code.

\addcontentsline{toc}{section}{References}
\emergencystretch=1em
\printbibliography
 
\end{document}